    \newtheorem{theorem}    {Theorem}       [section]
    \newtheorem{lemma}      [theorem]       {Lemma}
    \newtheorem{corollary}  [theorem]       {Corollary}
    \newtheorem{prop}       [theorem]       {Proposition}
    \newtheorem{definition} [theorem]       {Definition}
\newcommand{\Z}{{\mathbb Z}}
\newcommand{\Q}{{\mathbb Q}}
\newcommand{\R}{{\mathbb R}}
\newcommand{\C}{{\mathbb C}}
\newcommand{\F}{{\mathbb F}}
\newcommand \M {{\mathcal M}}
\newcommand \Spec {{\operatorname{Spec }}}
\begin{document}

\title{Transfer maps and nonexistence of joint determinant}
         \author[Sung Myung]{Sung Myung}
         \address{Department of Mathematics Education, Inha University, 253 Yonghyun-dong, Nam-gu,
             Incheon, 402-751 Korea}
         \email{s-myung1\char`\@inha.ac.kr}

\keywords{Milnor K-theory, transfer maps, Goodwillie group, determinant}
\subjclass[2000]{15A15; 15A27; 19M05}

\begin{abstract}
Transfer Maps, sometimes called norm maps, for Milnor's $K$-theory were first defined by Bass and Tate (1972) for simple extensions of
fields via tame symbol and Weil's reciprocity law, but their functoriality had not been settled until Kato (1980). On the other
hand, functorial transfer maps for the Goodwillie group are easily defined. We show that these natural transfer maps actually agree
with the classical but difficult transfer maps by Bass and Tate. With this result, we build an isomorphism from
the Goodwillie groups to Milnor's $K$-groups of fields, which in turn provides a description of joint determinants for the commuting
invertible matrices. In particular, we explicitly determine certain joint determinants for the commuting invertible matrices over a finite field,
$\Q$, $\R$ and $\C$ into the respective group of units of given field.
\end{abstract}

        \maketitle

        \section{Introduction} \label{intro}
For a finite field extension $L/k$, we have the norm map $N_{L/k}: L^\times \rightarrow k^\times$.
In this article, we first show how to generalize the notion of the norm map to involve
$l$-tuples of elements $\alpha_1, \dots, \alpha_l \in L^\times$ or more generally $l$-tuples of commuting matrices $A_1, \dots, A_l \in GL_n(L)$.
These generalizations are called the transfer maps. For the case of $l$-tuples of elements in $L^\times$, the transfer map is defined
via Milnor's $K$-theory but it is somewhat difficult to describe because, although these elements give commuting $k$-linear automorphisms on $L$,
it is not easy to invent a process, which is `determinant' in case of a single element, to push them back to elements of $k$.
Bass and Tate (\cite{MR0442061}) were able to create a method for this case and its validity was verified by Kato (\cite{MR603953}).
For the case of $l$-tuples of commuting matrices in $GL_n(L)$, the transfer map is defined via the Goodwillie group. In this case,
we just observe that commuting matrices in $GL_n(L)$ give rise to commuting matrices in $GL_{dn}(k)$ by simply regarding $L$ as
$d$-dimensional $k$-vector space and declare that this process gives rise to the transfer maps.

We find that these two transfer maps are related and lots of ingredients involving determinants, matrix multiplications and
Kronecker products play roles in the dynamics of transfer maps. The transfer maps are the key ingredients connecting Milnor's $K$-theory
and the Goodwillie group. We remark that the Goodwillie group is one way to describe `motivic cohomology' of fields when the degree is
equal to the weight. Its natural generalization called Goodwillie-Lichtenbaum complex, after some modifications, gives the
motivic cohomology of arbitrary regular schemes in algebraic geometry. In this regard, Theorem \ref{Milnor-iso} may be viewed as
a reproduction of Nesterenko-Suslin theorem (\cite{MR992981}) which states that Milnor's $K$-theory and the motivic cohomology of fields
are isomorphic when the degree is equal to the weight. In fact, we show that $\displaystyle \bigoplus_{l \ge 0} K^M_l(k)$ and
$\displaystyle \bigoplus_{l \ge 0} GW_l(k)$ are isomorphic as graded rings with their respective product structures
which are described in Section \ref{milnor-K} and \ref{sec-Goodwillie}, respectively.

Finally, this result leads to the conclusion that there does not exist an interesting `determinant' $D$ for tuples of commuting invertible matrices over the usual fields
considered in Linear Algebra. More precisely, when $l \ge 2$ and $k$ is either $\Q$, $\C$ or a finite field,
there is no map $D$ of $l$-tuples of commuting matrices over $k$ onto $k^\times$ which satisfy the following 4 conditions.
When $k=\R$, we also require that $D$ is a continuous map when restricted to the set of commuting matrices in $GL_n(\R)$ for some $n$ with the usual topology. \\
$(i)$ (Multilinearity) For $l+1$ commuting invertible matrices $A_1, \dots, A_l$ and $B$ in $GL_n(k)$ for some $n \ge 1$, we have
$$D(A_1, \dots A_{i-1}, A_i B, A_{i+1}, \dots, A_l) = D(A_1, \dots, A_{i-1},A_i, A_{i+1} \dots, A_l) \cdot D(A_1, \dots,A_{i-1}, B, A_{i+1} \dots, A_l).$$
$(ii)$ (Block Diagonal Matrices) For commuting invertible matrices $A_1, \dots, A_l \in GL_m(k)$ and $B_1, \dots, B_l \in GL_n(k)$ for some
$m, n \ge 1$, we have
$$D\left(\begin{pmatrix} A_1 & 0 \\ 0 & B_1 \end{pmatrix}, \dots, \begin{pmatrix} A_l & 0 \\ 0 & B_l \end{pmatrix}\right)
= D(A_1, \dots, A_l) \cdot D(B_1, \dots, B_l).$$
$(iii)$ (Similar Matrices) For commuting matrices $A_1, \dots, A_l \in GL_n(k)$ and any $S \in GL_n(k)$,
we have $$D(S A_1 S^{-1}, \dots, S A_l S^{-1}) = D(A_1, \dots, A_l).$$
$(iv)$ (Polynomial Homotopy) For commuting $A_1(t), \dots, A_l(t) \in GL_n(k[t])$, we have $$D(A_1(0), \dots, A_l(0)) = D(A_1(1), \dots, A_l(1)).$$

The case where $k$ is either $\Q$, $\R$ or a finite field, respectively, is stated in
Corollary \ref{determinant-Q}, Corollary \ref{determinant-R} and Corollary \ref{determinant-finite-field}, respectively.
The case $k=\C$ is not stated as a separate corollary, but the nonexistence of `determinant' follows immediately from
Theorem \ref{Milnor-joint-determinant} and the fact that $K^M_l(\C)$ is uniquely divisible when $l \ge 2$.
More precise description is provided in Section \ref{sec-joint-det}.

       \section{Some computations in Milnor's $K$-groups} \label{milnor-K}
The basic reference for this section is \cite{MR0349811} and \cite{MR0442061}, but some computations later in the section are new
and are to be used later to prove the compatibility of transfer maps between the Milnor's $K$-groups and the Goodwillie groups.

For a field $k$, we define Milnor's $K$-groups $K^M_n(k)$ of $k$ as follow.

\begin{definition} \label{Milnor-symbol}
The $n$-th Milnor's $K$-group $K^M_n(k)$ is the additive quotient group of the tensor product
$(k^\times) ^{\otimes n}=k^\times \otimes k^\times \otimes \dots \otimes k^\times$ ($n$-times) by the
subgroup generated by the elements of the form $a_1 \otimes a_2 \otimes \dots \otimes a_n \in (k^\times) ^{\otimes n}$
where $a_i + a_j =1$ for some $1 \le i < j \le n$.
We denote by $\{a_1, a_2, \dots, a_n \}$, called a Milnor symbol,
the image of $a_1 \otimes a_2 \otimes \dots \otimes a_n \in (k^\times) ^{\otimes n}$ in $K^M_n(k)$.
\end{definition}

In particular, $K^M_1(k) \simeq k^\times$, but the group operation is written additively so that $\{a\} + \{b\} = \{ab\}$ and we set $K^M_0(k) = \Z$.
The following properties are some of basic relations for symbols in $K^M_2(k)$.
\begin{lemma} \label{Milnor-basic-relation}
Suppose that $a$, $b$ and $c$ are arbitrary nonzero elements of $k$. \\
(i) (Multilinearity) $\{ab, c\} = \{a,c\}+\{b,c\}$. In particular, $\{a, 1\} = 0$ for any $a \in k^\times$. \\
(ii) (Skew-symmetry) $\{a, b\} = -\{b,a\}$. \\
(iii) $\{a, -a\} = 0$.
\end{lemma}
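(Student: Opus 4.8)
The plan is to derive all three relations from two inputs only: the $\Z$-bilinearity already present in the tensor product $k^\times \otimes_\Z k^\times$, and the single defining relation of Definition \ref{Milnor-symbol} in degree two, namely the Steinberg relation $\{a, 1-a\} = 0$ whenever $a \neq 0, 1$.

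Part (i) is essentially formal. Since $K^M_2(k)$ is a quotient of $k^\times \otimes_\Z k^\times$ and the group law on $k^\times$ is multiplication, the identities $(ab) \otimes c = a \otimes c + b \otimes c$ and $a \otimes (bc) = a \otimes b + a \otimes c$ hold already before passing to the quotient, so multilinearity descends to $K^M_2(k)$ in both slots. Taking $\{a, 1\} = \{a, 1 \cdot 1\} = \{a,1\} + \{a,1\}$ then forces $\{a, 1\} = 0$.

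For part (iii) I would combine the Steinberg relation with the elementary identity $-a = (1-a)/(1 - a^{-1})$, valid for $a \neq 1$. Expanding by the multilinearity of (i) gives $\{a, -a\} = \{a, 1-a\} - \{a, 1 - a^{-1}\}$, and the first term vanishes by Steinberg. For the second term, note that $\{a^{-1}, c\} = -\{a, c\}$ (a consequence of (i), since $\{a\} + \{a^{-1}\} = \{1\} = 0$ in $K^M_1$), so $\{a, 1 - a^{-1}\} = -\{a^{-1}, 1 - a^{-1}\} = 0$ by Steinberg again. The excluded case $a = 1$ is already handled by (i).

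Finally, part (ii) follows from (iii) by a standard expansion. Writing $-ab = (-a)\,b = a\,(-b)$ and applying the multilinearity of (i) twice yields
$$0 = \{ab, -ab\} = \{a, -a\} + \{a, b\} + \{b, a\} + \{b, -b\} = \{a, b\} + \{b, a\},$$
since $\{a,-a\} = \{b,-b\} = 0$; hence $\{a,b\} = -\{b,a\}$. The only step that is not purely formal is the algebraic identity underlying (iii), so I expect the computation of $\{a, -a\}$ to be the one place requiring a genuine idea; everything else is bookkeeping with bilinearity and the Steinberg relation.
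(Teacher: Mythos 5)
Your proposal is correct and follows essentially the same route as the paper: (i) is formal bilinearity of the tensor product, (iii) rests on the identity $-a = (1-a)\left(1-a^{-1}\right)^{-1}$ combined with the Steinberg relations for $a$ and $a^{-1}$, and (ii) is the expansion of $0=\{ab,-ab\}$. Your explicit handling of the excluded case $a=1$ is a small point of extra care not spelled out in the paper, but the argument is otherwise the same.
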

\begin{proof}
$(i)$ Multilinearity is already part of the definition of tensor products.

$(iii)$ $ 0 = \{a, 1-a\} +\{a^{-1}, 1-a^{-1}\} = \{a, 1-a\} + \{a, \left(1-a^{-1} \right)^{-1} \}
= \{a, \left(1-a \right) \left(1-a^{-1} \right)^{-1} \} = \{a, -a\}$.

$(ii)$ $0 = \{ab, -ab \} = \{a, -ab \} + \{b, -ab\} = \{a, -a\} + \{a, b\} + \{b, a \} + \{b, -b\} = \{a, b\} + \{b, a \}$ by $(iii)$.

\end{proof}

Thanks to Lemma \ref{Milnor-basic-relation}, $\displaystyle \bigoplus_{l \ge 0} K^M_l(k)$ may be given an anti-commutative graded ring structure
by defining the product by the rule $\{a_1, \dots, a_p \} \cdot \{b_1, \dots, b_q \} = \{a_1, \dots, a_p, b_1, \dots, b_q \}$.
In particular, suppose that $1 \le n \le l$ and $a_1, \dots, a_n$ are nonzero elements of $k$ such that
$\{a_1, \dots, a_n\}=0$ in $K^M_n(k)$. Then, we have $\{a_1, \dots, a_n, b_{n+1}, \dots, b_l\}=0$ in $K^M_l(k)$ for arbitrary nonzero elements
$b_{n+1}, \dots, b_l$ of $k$.

\begin{lemma} \label{Milnor-relation2}
Let $c$ and $d$ be arbitrary nonzero elements of a field $k$. Then, we have the following relations in $K^M_2(k)$.\\
(i) $\displaystyle \{c, d \} = \{ {\frac c d}, d-c \} + \{-1, d\}$. \\
(ii) $\displaystyle \{c, d \} = \{ -{\frac c d}, d+c \}$.
\end{lemma}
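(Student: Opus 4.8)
The plan is to reduce both identities to the defining Steinberg relation $\{x, 1-x\} = 0$ (valid whenever $x \in k^\times$ with $x \neq 1$, which comes straight from Definition \ref{Milnor-symbol} since $x + (1-x) = 1$) together with the elementary manipulations recorded in Lemma \ref{Milnor-basic-relation}. Throughout I read (i) under the tacit hypothesis $c \neq d$ and (ii) under $c \neq -d$, which is exactly what is needed for the symbols $\{c/d,\, d-c\}$ and $\{-c/d,\, d+c\}$ to be defined in the first place.

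First I would isolate the auxiliary identity $\{a, a\} = \{-1, a\}$ for every $a \in k^\times$. This is immediate from Lemma \ref{Milnor-basic-relation}: expanding $0 = \{a, -a\}$ by multilinearity in the second slot gives $0 = \{a, -1\} + \{a, a\}$, so $\{a, a\} = -\{a, -1\}$, and skew-symmetry turns $-\{a, -1\}$ into $\{-1, a\}$. This small lemma is what will later let me trade the ``diagonal'' symbol $\{d, d\}$ that surfaces after clearing denominators for the clean term $\{-1, d\}$ in the final formula.

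For (i), the key step is to apply the Steinberg relation to $x = c/d$. Since $1 - c/d = (d-c)/d$, multilinearity in the second slot yields
\[ 0 = \{c/d,\, (d-c)/d\} = \{c/d,\, d-c\} - \{c/d,\, d\}, \]
so that $\{c/d,\, d-c\} = \{c/d,\, d\}$. I would then expand the right-hand side by multilinearity in the first slot as $\{c/d, d\} = \{c, d\} - \{d, d\}$ and replace $\{d, d\}$ by $\{-1, d\}$ using the auxiliary identity. Rearranging gives $\{c, d\} = \{c/d,\, d-c\} + \{-1, d\}$, which is exactly (i).

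For (ii) I would avoid starting over and instead substitute $c \mapsto -c$ into the identity just proved (the hypothesis $-c \neq d$ is precisely $c \neq -d$). This turns (i) into $\{-c, d\} = \{-c/d,\, d+c\} + \{-1, d\}$; writing $\{-c, d\} = \{-1, d\} + \{c, d\}$ by multilinearity and cancelling the common $\{-1, d\}$ leaves $\{c, d\} = \{-c/d,\, d+c\}$. Equally well one can rerun the argument of (i) verbatim with $x = -c/d$, in which case the two occurrences of $\{-1, d\}$ cancel on the spot. The only genuine pitfall is the bookkeeping of signs — in particular pinning down $\{a, a\} = \{-1, a\}$ with the correct sign, since an off-by-one slip there flips the sign of the $\{-1, d\}$ term — and carrying the nondegeneracy conditions $c \neq d$ and $c \neq -d$ along so that every symbol written down is legitimate.
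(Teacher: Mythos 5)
Your proof is correct and follows essentially the same route as the paper: apply the Steinberg relation to $c/d$, expand by multilinearity, convert $\{d,d\}$ to $\{-1,d\}$ via $\{a,-a\}=0$, and obtain (ii) from (i) by substituting $-c$ for $c$. The only (immaterial) difference is that you derive $\{a,a\}=\{-1,a\}$ by expanding the second slot of $\{a,-a\}$ while the paper expands the first, and you make explicit the nondegeneracy conditions the paper leaves tacit.
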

\begin{proof}
$(i)$ $\displaystyle 0=\{ {\frac c d}, 1- {\frac c d} \} = \{ {\frac c d}, {\frac {d-c} d} \}
= \{{\frac c d}, d-c\}-\{c,d\}+\{d,d\}$. But, $\{d,d\} = \{-1, d\} + \{-d, d\} = \{-1, d\}$ by Lemma \ref{Milnor-basic-relation} $(iii)$.\\
$(ii)$ follows quickly from $(i)$ by substituting $-c$ for $c$ in the equality.
\end{proof}

Now let us prove the following key relation which will be used later in the proof of Lemma \ref{Weil-like-formula}.

\begin{prop} \label{Milnor-key-relation}
Suppose that $l \ge 1$ and that $l+1$ elements $x_0, x_1, x_2, \dots, x_l$ of a field $k$ is such that $x_i - x_j \ne 0$, whenever $i \ne j$ modulo
$l+1$, where the indices are considered modulo $l+1$. Then we have, in $\ K^M_l (k)$,
$$ \sum_{i=0}^l (-1)^{l(i+1)} \{x_{i+1}-x_i, x_{i+2}-x_{i}, x_{i+3}-x_i, \dots, x_{i+l}-x_i \} = \{-1, -1, -1, \dots, -1\}.$$
\end{prop}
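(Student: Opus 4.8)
The plan is to argue by induction on $l$. For the base case $l=1$ the sum has two terms and lives in $K^M_1(k)\simeq k^\times$; writing it additively gives $-\{x_1-x_0\}+\{x_0-x_1\}=\{(x_0-x_1)/(x_1-x_0)\}=\{-1\}$, which is exactly the right-hand side. Throughout I would use freely that each symbol $\{-1,\dots,-1\}$ is $2$-torsion, since $\{-1,b\}+\{-1,b\}=\{(-1)^2,b\}=0$ by Lemma \ref{Milnor-basic-relation}; in particular $(-1)^l\{-1,\dots,-1\}=\{-1,\dots,-1\}$ regardless of the parity of $l$.

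Before the inductive step I would recast the left-hand side in a more symmetric form. The $i$-th symbol $\{x_{i+1}-x_i,\dots,x_{i+l}-x_i\}$ records the differences $x_j-x_i$ of all the remaining points from $x_i$, listed in the cyclic order $j=i+1,\dots,l,0,\dots,i-1$. Reordering these into the natural order is a cyclic shift of an $i$-block past an $(l-i)$-block, so it costs the sign $(-1)^{i(l-i)}$. Setting $\tau_i=\{x_j-x_i:\,j\neq i\}$ with the entries listed in increasing order of $j$, one gets $\sigma_i:=\{x_{i+1}-x_i,\dots,x_{i+l}-x_i\}=(-1)^{i(l-i)}\tau_i$, and a short parity computation turns the coefficient $(-1)^{l(i+1)}$ into $(-1)^{l+i}$. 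Using the $2$-torsion remark, the proposition becomes the equivalent statement $\sum_{i=0}^{l}(-1)^i\tau_i=\{-1,\dots,-1\}$.

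For the inductive step I would peel off the point $x_l$. For $i<l$ the largest index occurring in $\tau_i$ is $j=l$, so $\tau_i=\tau_i'\cdot\{x_l-x_i\}$, where $\tau_i'$ is precisely the deleted symbol for the reduced configuration $(x_0,\dots,x_{l-1})$, while the term $i=l$ is $\tau_l=\{x_0-x_l,\dots,x_{l-1}-x_l\}$. Writing $\{x_l-x_i\}=\{-1\}+\{x_i-x_l\}$ and using multilinearity, the contribution $\big(\sum_{i<l}(-1)^i\tau_i'\big)\cdot\{-1\}$ equals, by the induction hypothesis, $\{-1,\dots,-1\}$ ($l-1$ factors) times $\{-1\}$, that is $\{-1,\dots,-1\}$ with $l$ factors, the desired answer. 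Hence it remains to prove that the remainder
$$\sum_{i=0}^{l-1}(-1)^i\,\tau_i'\cdot\{x_i-x_l\}\;+\;(-1)^l\,\tau_l \;=\;0.$$

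This vanishing is where I expect the real difficulty to lie. After moving the single factor $\{x_i-x_l\}$ to the front of each term (at the cost of $(-1)^{l-1}$) and translating all points by $-x_l$, it reduces to an auxiliary identity of the same multilinear shape but one variable shorter, which I would establish by a parallel induction in which Lemma \ref{Milnor-relation2}(i) is applied repeatedly to rewrite each difference-of-differences $\{c,d\}$ as $\{c/d,\,d-c\}+\{-1,d\}$, thereby matching the front factors of consecutive terms so that they telescope. The bookkeeping of the accumulated $\{-1,\dots\}$ correction terms is the crux: exactly as in the explicit case $l=2$, where $\{b,c\}+\{c-b,-b\}+\{-c,b-c\}$ collapses to $\{-1,-1\}$ only after a Steinberg rewrite and the cancellation $2\{-1,q\}=0$, these corrections disappear only modulo the $2$-torsion of the symbols $\{-1,\dots,-1\}$, and it is this collapse that makes the remainder vanish and closes the induction.
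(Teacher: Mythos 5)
Your base case, the $2$-torsion remark, the reindexing $\sigma_i=(-1)^{i(l-i)}\tau_i$ leading to the equivalent form $\sum_{i=0}^{l}(-1)^i\tau_i=\{-1,\dots,-1\}$, and the extraction of the $\{-1\}$-contribution via the inductive hypothesis are all correct; indeed this reformulation is exactly the identity (\ref{key-1a}) that the paper's own proof targets. The problem is the final vanishing claim, which is where all the content lives and which you do not prove. Writing $y_i=x_i-x_l$, your remainder identity reads
$$\sum_{i=0}^{l-1}(-1)^i\{y_0-y_i,\dots,y_{i-1}-y_i,\,y_{i+1}-y_i,\dots,y_{l-1}-y_i,\;y_i\}\;+\;(-1)^l\{y_0,\dots,y_{l-1}\}\;=\;0,$$
and, modulo the inductive hypothesis you have already used, this is precisely the statement of the Proposition for the $l+1$ points $(y_0,\dots,y_{l-1},0)$ --- to which the general case reduces anyway, since the left-hand side of the Proposition depends only on differences of the $x_i$. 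So the induction does not close: the ``auxiliary identity'' is not one variable shorter in any useful sense, it is the theorem itself in the normalization $x_l=0$. The proposed mechanism (translating by $-x_l$, then applying Lemma \ref{Milnor-relation2}(i) so that ``front factors of consecutive terms telescope'') is never made precise, and it is not clear how it could work as stated: the appended factors $\{x_i-x_l\}$ vary with $i$, and the deleted symbols $\tau_i'$ of consecutive terms share no common coordinate against which to telescope.

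The paper escapes exactly this trap by keeping the appended factor \emph{constant}: it multiplies the inductive identity for $(x_0,\dots,x_{l-1})$ on the right by the single symbol $\{x_l-x_0\}$, then applies Lemma \ref{Milnor-relation2}(ii) to the first and last coordinates of each term to trade the last entry $x_l-x_0$ for the desired $x_l-x_i$; the price is a correction term whose first coordinate is $x_0-x_l$ in \emph{every} summand. Those corrections can therefore be factored and summed by a second application of the inductive hypothesis, now to the shifted points $(x_1,\dots,x_l)$, giving $\{x_0-x_l,-1,\dots,-1\}$, which is absorbed into the $2$-torsion term. To salvage your version you would need an independent proof of the displayed identity in the $y_i$; as written, the crucial step is asserted rather than proved.
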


\begin{proof}
We proceed by induction on $l$.
The case $l=1$ is straightforward since $\displaystyle -\{x_1-x_0\}+\{x_0-x_1\} = \{ {\frac {x_0-x_1} {x_1-x_0}} \} = \{-1\}$ in $K_1(k)$.

To prove the proposition we aim to prove by induction the following equality
\begin{multline} \label{key-1a}
\{ x_1-x_0, x_2-x_0, x_3-x_0, x_4-x_0, \dots, x_l-x_0 \} \\
 = \{x_0-x_1, x_2 - x_1, x_3-x_1, x_4-x_1, \dots, x_l-x_1 \} \\
 - \{x_0-x_2, x_1 - x_2, x_3-x_2, x_4-x_2, \dots, x_l-x_2 \} \\
 + \{x_0-x_3, x_1-x_3, x_2-x_3, x_4-x_3, \dots, x_l-x_3 \} \\
 - \dots +(-1)^{i+1} \{x_0-x_i, x_1-x_i, x_2-x_i, \dots, x_{i-1}-x_i, x_{i+1}-x_i, \dots, x_l-x_i \} + \cdots \\
 +(-1)^{l+1} \{x_0-x_l, x_1-x_l, x_2-x_l, x_3-x_l, x_4-x_l, \dots, x_{l-1}-x_l\} \\
 + \{-1, -1, -1, \dots, -1 \}.
\end{multline}
Once the relation (\ref{key-1a}) is proved, we deduce by Lemma \ref{Milnor-basic-relation} $(ii)$ that
 \begin{multline*}
\{ x_1-x_0, x_2-x_0, x_3-x_0, x_4-x_0, \dots, x_l-x_0 \} + (-1)^l \{x_2- x_1, x_3 - x_1, x_4-x_1,  \dots, x_l-x_1, x_0 - x_1 \} \\
+ \{x_3-x_2, x_4 - x_2, x_5-x_2, \dots, x_1-x_2 \}
+ (-1)^l \{x_4-x_3, x_5-x_3, x_6-x_3, \dots, x_2-x_3 \} \\
+ \dots +\{x_0-x_l, x_1-x_l, x_2-x_l, \dots, x_{l-1}-x_l \} = \{-1, -1, -1, \dots, -1\}
\end{multline*}
and the proof of the proposition will be complete if we multiply both sides by $(-1)^l$ since $\{-1, -1, \dots, -1\}$ is 2-torsion.

By the inductive hypothesis on $l-1$, we have
\begin{multline} \label{key-1b}
\{ x_1-x_0, x_2-x_0, x_3-x_0, x_4-x_0, \dots, x_{l-1}-x_0 \} \\
 = \{x_0-x_1, x_2 - x_1, x_3-x_1, x_4-x_1, \dots, x_{l-1}-x_1 \} \\
 - \{x_0-x_2, x_1 - x_2, x_3-x_2, x_4-x_2, \dots, x_{l-1}-x_2 \} \\
 + \{x_0-x_3, x_1-x_3, x_2-x_3, x_4-x_3, \dots, x_{l-1}-x_3 \} \\
 - \dots + (-1)^{i+1} \{x_0-x_i, x_1-x_i, x_2-x_i, x_3-x_i, \dots, x_{i-1}-x_i, x_{i+1}-x_i,\dots, x_{l-1}-x_i \} + \cdots \\
 +(-1)^{l} \{x_0-x_{l-1}, x_1-x_{l-1}, x_2-x_{l-1}, x_3-x_{l-1}, x_4-x_{l-1}, \dots, x_{l-2}-x_{l-1}\} \\
 + \{-1, -1, -1, \dots, -1 \}.
\end{multline}

Multiply on the right by $x_l - x_0$ and we get
\begin{multline*}
\{ x_1-x_0, x_2-x_0, x_3-x_0, x_4-x_0, \dots, x_{l-1}-x_0, x_l-x_0 \} \\
 = \{x_0-x_1, x_2-x_1, x_3-x_1, x_4-x_1, \dots, x_{l-1}-x_1, x_l-x_0 \} \\
 - \{x_0-x_2, x_1-x_2, x_3-x_2, x_4-x_2, \dots, x_{l-1}-x_2, x_l-x_0 \} \\
 + \{x_0-x_3, x_1-x_3, x_2-x_3, x_4-x_3, \dots, x_{l-1}-x_3, x_l-x_0 \} \\
 - \dots + (-1)^{i+1} \{x_0-x_i, x_1-x_i, x_2-x_i, x_3-x_i, \dots, x_{i-1}-x_i, x_{i+1}-x_i,\dots, x_{l-1}-x_i, x_l-x_0 \} + \cdots \\
 +(-1)^{l} \{x_0-x_{l-1}, x_1-x_{l-1}, x_2-x_{l-1}, x_3-x_{l-1}, x_4-x_{l-1}, \dots, x_{l-2}-x_{l-1}, x_l-x_0\} \\
 + \{-1, -1, -1, \dots, -1, x_l-x_0 \}.
\end{multline*}

Applying Lemma \ref{Milnor-relation2} $(ii)$ to the first and the last coordinates, each of the terms
$$\{x_0-x_i, x_1-x_i, x_2-x_i, x_3-x_i, \dots, x_{i-1}-x_i, x_{i+1}-x_i,\dots, x_{l-1}-x_i, x_l-x_0 \}$$
($i=1,2, \dots, l-1$) in the right hand side can be rewritten as
\begin{align*}
& \{ {\frac {x_0-x_i} {x_0-x_l}}, x_1-x_i, x_2-x_i, \dots, x_{i-1}-x_i, x_{i+1}-x_i,\dots, x_{l-1}-x_i, x_l-x_i \} \\
& \quad = \{x_0-x_i, x_1-x_i, x_2-x_i, \dots, x_{i-1}-x_i, x_{i+1}-x_i,\dots, x_{l-1}-x_i, x_l-x_i \} \\
& \quad \quad- \{x_0-x_l, x_1-x_i, x_2-x_i, \dots, x_{i-1}-x_i, x_{i+1}-x_i,\dots, x_{l-1}-x_i, x_l-x_i \}.
\end{align*}
Hence, we have
\begin{multline*}
\{ x_1-x_0, x_2-x_0, x_3-x_0, \dots, x_{l-1}-x_0, x_l-x_0 \} \\
= \sum_{i=1}^{l-1} (-1)^{i+1} \{x_0-x_i, x_1-x_i, x_2-x_i, \dots, x_{i-1}-x_i, x_{i+1}-x_i,\dots, x_{l-1}-x_i, x_l-x_i \} \\
-\sum_{i=1}^{l-1} (-1)^{i+1} \{x_0-x_l, x_1-x_i, x_2-x_i, \dots, x_{i-1}-x_i, x_{i+1}-x_i,\dots, x_{l-1}-x_i, x_l-x_i \} \\
+ \{-1, -1, -1, \dots, -1, x_l-x_0 \}.
\end{multline*}

Therefore, to prove (\ref{key-1a}) by induction on $l$, it suffices to show the equality
\begin{multline*}
-\sum_{i=1}^{l-1} (-1)^{i+1} \{x_0-x_l, x_1-x_i, x_2-x_i, \dots, x_{i-1}-x_i, x_{i+1}-x_i,\dots, x_{l-1}-x_i, x_l-x_i \} \\
+ \{-1, -1, -1, \dots, -1, x_l-x_0 \} \\
= (-1)^{l+1} \{x_0-x_l, x_1-x_l, x_2-x_l, \dots, x_{l-1}-x_l \} \\
+ \{-1, -1, -1, \dots, -1, -1 \},
\end{multline*}
or equivalently, as any Milnor symbol having $-1$ as a coordinate is 2-torsion,
\begin{multline*}
\sum_{i=1}^{l} (-1)^{i+1} \{x_0-x_l, x_1-x_i, x_2-x_i, \dots, x_{i-1}-x_i, x_{i+1}-x_i,\dots, x_{l-1}-x_i, x_l-x_i \} \\
 =  \{x_0-x_l, -1, -1, -1, \dots, -1 \}. \\
\end{multline*}

But, this equality is obtained by replacing $x_0, x_1, \dots, x_{l-1}$ with $x_1, x_2, \dots, x_{l}$ in the inductive hypothesis (\ref{key-1b}) as
\begin{multline*}
\{ x_2-x_1, x_3-x_1, x_4-x_1, \dots, x_l-x_1 \} \\
 = \{x_1-x_2, x_3 - x_2, x_4-x_2, \dots, x_l-x_2 \} \\
 - \{x_1-x_3, x_2 - x_3, x_4-x_3, \dots, x_l-x_3 \} \\
 + \{x_1-x_4, x_2 - x_4, x_3-x_4, \dots, x_l-x_4 \} \\
 - \dots +(-1)^{i+1} \{x_1-x_i, x_2 - x_i, \dots, x_{i-1}-x_i, x_{i+1}-x_i, \dots, x_l-x_i \} + \cdots \\
 +(-1)^{l} \{x_1-x_l, x_2 - x_l, x_3-x_l, x_4-x_l, \dots, x_{l-1}-x_l \} \\
 + \{-1, -1, -1, \dots, -1 \}.
 \end{multline*}
and then multiplying on the left by $x_0-x_l$.
\end{proof}

\section{Transfer maps for Milnor's $K$-groups} \label{sec-transfer-Milnor}

Let us recall a definition of the transfer map for Milnor's $K$-groups.
Let $k$ be any field and let $K=k(X)$ be the field of rational functions of the projective line ${\mathbb{P}}^1_k$ over $k$.
A discrete valuation $v$ of the field $K=k(X)$, which vanishes on $k$, is called a discrete valuation of $K/k$. For each discrete valuation $v$
of $K/k$, let $\pi_v$ be a uniformizing parameter
and $k_v =R_v/(\pi_v)$ be the residue field of the valuation ring $R_v=\{ r \in K | v(r) \ge 0 \}$.

There are two types of discrete valuations of $K/k$. For each monic irreducible polynomial $\pi_v$ of
the polynomial ring $k[X]$, we can associate a unique valuation $v$ of $K$ such that $v(\pi_v)=1$. The other type is $v_\infty$
where $\pi_v = 1/X$, i.e., $v(f) = -\deg f(X)$ whenever $f(X)$ is a polynomial in $k[X]$.

The higher tame symbol $\partial_v : K^M_{l+1}(K) \rightarrow K^M_l(k_v)$ is defined as follows (See \cite{MR0442061} or \cite{MR603953} for more details).
\begin{definition}
The tame symbol $\partial_v : K^M_{l+1}(K) \rightarrow K^M_l(k_v)$ is the unique epimorphism such that
$$\partial_v ( \{ u_1, \dots, u_l, y \}) = v(y) \{\overline{u_1}, \dots, \overline{u_l} \}$$
whenever $u_1, \dots, u_l$ are units of the valuation ring $R_v$.
\end{definition}

Let $v_\infty$ be the discrete valuation of $K=k(X)$, which vanishes on $k$, such that $v_\infty (X) = -1$. In particular, $v_\infty (f) = -\deg(f)$
if $f \in k[X]$ is a polynomial of degree $\deg(f)$.
Every simple algebraic extension $L$ of $k$ is isomorphic to $k_v$ for some discrete valuation $v \ne v_\infty$ which corresponds to a
prime ideal $\mathfrak{p}$ of $k[X]$. Conversely, every discrete valuation $v \ne v_\infty$ of $K/k$ gives rise to a simple algebraic extension
$L$ of $k$. This fact motivates the following definition of the transfer maps for simple extensions, which is due to Bass and Tate (\cite{MR0442061}).

\begin{definition} \label{transfer-Milnor}
The transfer maps $N_{k_v/k} : K^M_l(k_v) \rightarrow K^M_l(k)$ are the unique homomorphisms such that, for every $w \in K^M_{l+1}(k(X))$,
$\displaystyle \sum_{v} N_{k_v/k} \left( \partial_v w \right)=0$
where the sum is taken over all discrete valuations of $k(X)/k$ including $v_\infty$ on $k(X)$.
For $v=v_\infty$, we take $N_{v_\infty} = \rm{Id}$.
\end{definition}

Kato (\cite{MR603953} \S 1.7) has shown that these maps, if defined as compositions of transfer maps for simple extensions for a given tower of
simple extensions, depend only on the field extension $L/k$, i.e., that it enjoys functoriality. See also \cite{MR689382}.
Therefore, the transfer map is well-defined for arbitrary finite extension $L/k$ and functorial for compositions of finite field extensions.

We would like to mention some useful lemmas before we go onto the next section.

\begin{lemma} \label{generator-residue-field}
Let $v$ be the discrete valuation of $K=k(X)$ associated with a monic irreducible polynomial $\pi_v$ of degree $d$.
Denote by $\alpha$ the image of $X$ in the residue field $k_v = k[X]/(\pi_v)$.
Then the group $K^M_l(k_v)$ is generated by the symbols of the form $\{a_1, a_2, \dots, a_{l-r}, f_1(\alpha), f_2(\alpha), \dots, f_r(\alpha) \}$,
where $a_1, a_2, \dots, a_{l-r}$ are elements of $k^\times$ and $f_1, \dots, f_r$ are monic irreducible polynomials in $k[X]$
with $0 < \deg f_1 < \deg f_2 < \dots < \deg f_r < d$.
\end{lemma}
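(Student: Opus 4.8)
The plan is to begin from the tautological generating set of $K^M_l(k_v)$ by symbols $\{\beta_1, \dots, \beta_l\}$ with $\beta_i \in k_v^\times$, and to rewrite a general such symbol in the asserted form. First I would represent each $\beta_i$ by its canonical representative, writing $\beta_i = g_i(\alpha)$ where $g_i \in k[X]$ is the unique nonzero polynomial of degree $< d$ with $g_i(\alpha) = \beta_i$. Factoring $g_i = c_i \prod_m p_{i,m}^{e_{i,m}}$ into a constant $c_i \in k^\times$ times monic irreducible polynomials $p_{i,m}$, each of degree $\le \deg g_i < d$, and noting that $p_{i,m}(\alpha) \ne 0$ (a monic irreducible of degree $< d$ cannot be the minimal polynomial $\pi_v$ of $\alpha$), the relation $\{g_i(\alpha)\} = \{c_i\} + \sum_m e_{i,m}\{p_{i,m}(\alpha)\}$ in $k_v^\times = K^M_1(k_v)$ together with multilinearity (Lemma \ref{Milnor-basic-relation}$(i)$) expands $\{\beta_1, \dots, \beta_l\}$ into a $\Z$-linear combination of symbols each of whose entries is either an element of $k^\times$ or of the form $p(\alpha)$ with $p$ monic irreducible of degree $< d$. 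Using skew-symmetry (Lemma \ref{Milnor-basic-relation}$(ii)$) I can gather all constant entries at the front, so it remains to treat a symbol $\{a_1, \dots, a_s, p_1(\alpha), \dots, p_t(\alpha)\}$ and to arrange that the degrees $\deg p_j$ are pairwise distinct and increasing.

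The heart of the argument is to remove coincidences among the $\deg p_j$, and for this I would induct on the nonnegative integer $M = \sum_j \deg p_j$, the total degree of the non-constant entries. Suppose two non-constant entries, say $p(\alpha)$ and $q(\alpha)$, have the same degree $e \ge 1$; after moving them to adjacent positions by skew-symmetry, the graded ring structure reduces the task to rewriting the length-two symbol $\{p(\alpha), q(\alpha)\}$ while the remaining (spectator) entries are left untouched. If $p = q$, then $\{p(\alpha), p(\alpha)\} = \{-1, p(\alpha)\}$ (this is exactly the identity $\{d,d\} = \{-1,d\}$ established inside the proof of Lemma \ref{Milnor-relation2}), which replaces one degree-$e$ entry by the constant $-1$ and hence lowers $M$ by $e$. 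If $p \ne q$, then since $p$ and $q$ are monic of the same degree their difference $q-p$ is a nonzero polynomial of degree $< e$, so $q(\alpha) - p(\alpha) = (q-p)(\alpha) \ne 0$, and Lemma \ref{Milnor-relation2}$(i)$ gives
\[
\{p(\alpha), q(\alpha)\} = \{p(\alpha), (q-p)(\alpha)\} - \{q(\alpha), (q-p)(\alpha)\} + \{-1, q(\alpha)\}.
\]
Factoring $(q-p)(\alpha)$ into a constant times monic irreducible evaluations of degree $< e$ and expanding by multilinearity, every resulting length-two symbol has one entry of degree $e$ and one entry that is either constant or of degree $< e$; in either case the two affected slots now contribute total degree at most $2e-1 < 2e$, so, the spectators being unchanged, the total $M$ of the full symbol strictly decreases.

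Since $M$ is a nonnegative integer that strictly decreases at every such step, the process terminates, and by the induction hypothesis each symbol of smaller $M$ is already a $\Z$-linear combination of symbols whose non-constant entries have pairwise distinct degrees. A final application of skew-symmetry sorts those entries into strictly increasing degree order $0 < \deg f_1 < \cdots < \deg f_r < d$, the sign changes being harmless for an assertion about generators, which yields the claimed generating set. The step I expect to be the main obstacle is precisely this degree-collision elimination: the essential mechanism is that the difference of two distinct monic polynomials of equal degree drops in degree, which is what forces $M$ to decrease, and one must take care that the repeated-polynomial case, where that difference vanishes and Lemma \ref{Milnor-relation2}$(i)$ is unavailable, is handled separately through the identity $\{p(\alpha), p(\alpha)\} = \{-1, p(\alpha)\}$.
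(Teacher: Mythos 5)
Your proposal is correct and follows essentially the same route as the paper: reduce by multilinearity to symbols whose entries are evaluations of monic irreducibles, then eliminate degree collisions by replacing $\{p,q\}$ (with $\deg p=\deg q$) via the identity $\{h/f,g/f\}=0$ for $f=g+h$ --- which is exactly Lemma \ref{Milnor-relation2}$(i)$ --- so that the difference $q-p$ drops in degree. Your version is in fact slightly more careful than the paper's, which compresses the termination argument into ``the proof follows inductively from this observation,'' whereas you make the decreasing quantity $M=\sum_j\deg p_j$ and the separate treatment of the case $p=q$ explicit.
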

\begin{proof}
Any element $w \in K^M_l(k_v)$ can be written as
$w=\{g_1(\alpha), \dots, g_l(\alpha)\}=\partial_v( \{g_1, \dots, g_l, \pi_v \} )$ for some polynomials
$g_1, \dots, g_l \in k[X]$ of degree less than $d$.
So, it suffices to prove that if $\{g_1, \dots, g_l\}$ is a symbol in $K^M_l(K)$ where $g_1, \dots, g_l \in k[X]$ are of degree
less than $d$, then it can be written as a sum of symbols of the form
$\{a_1, a_2, \dots, a_{l-r}, f_1, f_2, \dots, f_r \}$
where $a_1, a_2, \dots, a_{l-r}$ are elements of $k^\times$ and $f_1, \dots, f_r$ are monic irreducible polynomials in $k[X]$
with $0 < \deg f_1 < \deg f_2 < \dots < \deg f_r < d$.

First of all, by multilinearity of Milnor symbols, we may suppose that $g_1, \dots, g_l$ are monic irreducible polynomials in $k[X]$.

Now let us assume that $f$ and $g$ are monic irreducible polynomials of the same degree, say, $m > 0$.
Write $f = g + h$ where $\deg h$ is less than $m$. In case $f=g$, $\{f,g\} = \{-f, f\} + \{-1, f\} = \{-1, f\}$ in $K^M_2(K)$.
In other cases, $h \ne 0$ and we have $\displaystyle {\frac g f} + {\frac h f }=1$.
Hence $\displaystyle \{ {\frac h f}, {\frac g f} \} = 0$. If we expand out the symbol using the multilinearity, then
we get $\{f,g\} = \{h,g\} - \{h,f\} + \{f,f\} = \{h,g\} - \{h,f\} + \{-1,f\}$. In both cases, we can always rewrite $\{f,g\}$
as a sum of symbols of the form $\{\phi , \psi \}$ where $\phi$ and $\psi$ are polynomials of $k[X]$ with $\deg \phi < \deg \psi \le m$.

The proof follows inductively from this observation and we are done.
\end{proof}

\begin{lemma} \label{norm-inductive-formula}
(Inductive formula for $N_{k_v/k}$)
For a generator $x=\{a_1, a_2, \dots, a_{l-r}, f_1(\alpha), f_2(\alpha), \dots, f_r(\alpha) \} \in K^M_l(k_v)$ in Lemma \ref{generator-residue-field},
we may express $N_{k_v/k}(x)$ as a sum of $\pm \{-1, -1, \dots, -1\}$ and
$N_{v_i} (x_i)$ in $K^M_l(k)$, for $i=1,2,\dots,r$, where $\deg \pi_{v_i} < \deg \pi_v$ and $x_i \in K^M_l(k_{v_i})$ for each $i$.
\end{lemma}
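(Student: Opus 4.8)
The plan is to realize the generator $x$ as the tame symbol at $v$ of a single symbol in $K^M_{l+1}(k(X))$, and then read $N_{k_v/k}(x)$ off the reciprocity relation of Definition \ref{transfer-Milnor}. Concretely, I would set
$$w = \{a_1, \dots, a_{l-r}, f_1, f_2, \dots, f_r, \pi_v\} \in K^M_{l+1}(k(X)),$$
where $f_1, \dots, f_r$ and $\pi_v$ are now viewed as polynomials in $k[X]$. Since $\deg f_j < d = \deg \pi_v$ and these are distinct monic irreducibles, the $f_j$ are units of $R_v$ while $v(\pi_v)=1$; hence the defining property of $\partial_v$ gives $\partial_v(w) = \{a_1, \dots, a_{l-r}, f_1(\alpha), \dots, f_r(\alpha)\} = x$.

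Next I would invoke reciprocity. Because $\sum_{v'} N_{k_{v'}/k}(\partial_{v'} w)=0$ over all discrete valuations of $k(X)/k$ and $N_{v_\infty}=\mathrm{Id}$, rearranging yields
$$N_{k_v/k}(x) = -\,\partial_{v_\infty}(w) \;-\; \sum_{v' \ne v,\, v_\infty} N_{k_{v'}/k}(\partial_{v'} w).$$
The decisive bookkeeping is to see which $v'$ contribute. The scalars $a_j \in k^\times$ are units at every finite valuation, and a monic irreducible has positive valuation only at the valuation it determines; as $f_1,\dots,f_r,\pi_v$ are pairwise distinct monic irreducibles, the only finite $v'\ne v$ with $\partial_{v'}(w)\ne 0$ are the valuations $v_i:=v_{f_i}$. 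For each of these I would use skew-symmetry (Lemma \ref{Milnor-basic-relation}) to move $f_i$ into the last slot at the cost of a sign, and apply the tame symbol to get
$$\partial_{v_i}(w) = (-1)^{r-i+1}\{a_1, \dots, a_{l-r}, f_1(\beta_i), \dots, \widehat{f_i(\beta_i)}, \dots, f_r(\beta_i), \pi_v(\beta_i)\} = \pm\, x_i,$$
where $\beta_i$ is the image of $X$ in $k_{v_i}=k[X]/(f_i)$. This $x_i$ lies in $K^M_l(k_{v_i})$ and $\deg \pi_{v_i}=\deg f_i < d$, exactly as required.

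The main obstacle is the boundary at infinity $\partial_{v_\infty}(w)$, which (since $k_{v_\infty}=k$ and $N_{v_\infty}=\mathrm{Id}$) contributes an honest element of $K^M_l(k)$ that must be shown to be of the declared $\{-1,\dots,-1\}$ shape. Here I would take $t=1/X$ as uniformizer and write $f_i = X^{m_i} w_i$ and $\pi_v = X^d w_\pi$, where $m_i=\deg f_i$ and the unit parts satisfy $\overline{w_i}=\overline{w_\pi}=1$ in $k$. Expanding multilinearly over the $r+1$ choices of $X$-part versus unit-part, every summand keeping at least one factor $w_i$ reduces to a symbol with a $1$ entry, hence vanishes, while every summand with no factor of $X$ is a symbol of units and is killed by $\partial_{v_\infty}$. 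The sole survivor takes the $X$-part from all of $f_1,\dots,f_r,\pi_v$; collapsing the resulting $r+1$ copies of $X$ via $\{X,X\}=\{X,-1\}$ (again Lemma \ref{Milnor-basic-relation}) and applying the tame symbol gives
$$\partial_{v_\infty}(w) = \mp\, m_1 \cdots m_r\, d \,\{a_1, \dots, a_{l-r}, \underbrace{-1, \dots, -1}_{r}\},$$
a symbol of the claimed type (the scalar entries $a_j$ and the integer coefficient, collapsing to $\pm\{-1,\dots,-1\}$ when $r=l$). Substituting the two computations into the reciprocity identity produces the asserted inductive expression. The one recurring point demanding care is the sign and $2$-torsion bookkeeping for the symbols involving $-1$, for which I would consistently lean on Lemma \ref{Milnor-basic-relation} and the fact that any symbol with a $-1$ entry is $2$-torsion.
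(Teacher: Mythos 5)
Your proof is correct and follows essentially the same route as the paper's: realize $x$ as $\partial_v$ of the symbol $y=\{a_1,\dots,a_{l-r},f_1,\dots,f_r,\pi_v\}\in K^M_{l+1}(k(X))$, apply the Weil reciprocity law of Definition \ref{transfer-Milnor}, and observe that only $v$, the $v_i$, and $v_\infty$ contribute, with $\partial_{v_i}(y)=\pm x_i$ supplying the lower-degree transfer terms. Your bookkeeping at infinity is in fact slightly more careful than the paper's, which records $\partial_{v_\infty}(y)$ as $(-1)^{r+1}\deg(f_1)\cdots\deg(f_r)\deg(\pi_v)\{-1,\dots,-1\}$ without noting, as you do, that the scalar entries $a_j$ persist in that symbol when $r<l$.
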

\begin{proof}
Let $d$ be the degree of the monic irreducible polynomial $\pi_v$ in $k[X]$.
By Lemma \ref{generator-residue-field}, $K^M_l(k_v)$ is generated by symbols of the form
$$x = \{a_1, a_2, \dots, a_{l-r}, f_1(\alpha), f_2(\alpha), \dots, f_r(\alpha) \},$$
where $a_1, a_2, \dots, a_{l-r}$ are elements of $k^\times$ and $f_1, \dots, f_r$ are monic irreducible polynomials in $k[X]$
with $0 < \deg f_1 < \deg f_2 < \dots < \deg f_r < d$.
Let $v_1, v_2, \dots, v_r$ be the discrete valuations of $K=k(X)$ associated with $f_1, f_2, \dots, f_r$, respectively.
Write $y= \{a_1, a_2, \dots, a_{l-r}, f_1, f_2, \dots, f_r, \pi_v \} \in K^M_{l+1}(K)$ so that $\partial_v(y)=x$.
Then $N_{k_v/k}(x)$ appears as a term in the Weil reciprocity law $\displaystyle \sum_{w} N_w \left( \partial_w (x) \right)=0$.
But, we have $\partial_w(y) = 0$ unless $w$ is equal to either $v$, $v_i$ for some $i \in \{1,2,\dots,r\}$ or $v_\infty$.
Note that $\partial_{v_i}(y) = (-1)^{r-i}x_i$, where
$x_i = \{a_1, a_2, \dots, a_{l-r}, f_1(\alpha_i), \dots, f_{i-1}(\alpha_i), f_{i+1}(\alpha_i), \dots, f_r(\alpha_i), \pi_v(\alpha_i) \}$
and $\alpha_i$ is the image of $X$ in $k_v$ under the identification $k_v = k[X]/(f_i)$.
Also, we have $\partial_{v_\infty}(y) = (-1)^{r+1} \deg(f_1) \dots \deg(f_r) \deg(\pi_v) \{-1, -1, \dots, -1\}$.
Therefore,
$$N_{k_v/k}(x) = (-1)^{r} \deg(f_1) \dots \deg(f_r) \deg(\pi_v) \{-1, \dots, -1\} - \sum_{i=1}^r (-1)^r N_{v_i}(x_i).$$
Note that each $x_i$ may be written explicitly once $x$ is known.
\end{proof}

\section{The Goodwillie groups} \label{sec-Goodwillie}

We define the $l$-th Goodwillie group $GW_l(k)$ for $l \ge 1$ as follows when $k$ is a field.

\begin{definition} \label{GW_l}
$GW_l(k)$ is the abelian group generated by $l$-tuples of commuting matrices $(A_1, \dots, A_l)$ ($A_1, \dots, A_l \in GL_n(k)$ for various $n \ge 1$),
subject to the following 4 kinds of relations. \\
(i) (Identity Matrices) $(A_1, \dots, A_l) = 0$ when $A_i$ for some $i$ is equal to the identity matrix $I_n \in GL_n(k)$. \\
(ii) (Similar Matrices) $(A_1, \dots, A_l) = (SA_1S^{-1}, \dots, SA_lS^{-1})$ for commuting $A_1, \dots, A_l \in GL_n(k)$ and any $S \in GL_n(k)$. \\
(iii)  (Direct Sum) $\displaystyle (A_1, \dots, A_l) + (B_1, \dots, B_l)
= \left( \begin{pmatrix} A_1 & 0 \\ 0 & B_1 \end{pmatrix}, \dots, \begin{pmatrix} A_l & 0 \\ 0 & B_l \end{pmatrix} \right)$
for commuting $A_1, \dots, A_l \in GL_n(k)$ and commuting $B_1, \dots, B_l \in GL_m(k)$. \\
(iv) (Polynomial Homotopy) $(A_1(0), \dots, A_l(0)) = (A_1(1), \dots, A_l(1))$ for commuting matrices $A_1(t), \dots, A_l(t)$ in $GL_n(k[t])$, where $k[t]$ is the polynomial ring over $k$ with the indeterminate $t$. \\
\end{definition}

The motivic cohomology $H^d_{\M} \bigl(\Spec \, k , \Z(l) \bigr)$, which appear in various literatures and defined in many different ways although most of them turned out to be isomorphic,
is in fact isomorphic to the Goodwillie group $GW_l(k)$ when the degree $d$ is equal to the weight $l$ (See \cite{WM}).

It is immediate from the definition that, for an arbitrary field extension $k \subset L$, we have a natural homomorphism
$i_{L/k}: GW_l(k) \rightarrow GW_l(L)$ of groups.

Let us denote by $\displaystyle GL(k) = \bigcup_{n \ge 1} GL_n(k)$ the set of invertible matrices of finite ranks over $k$, where
two matrices $A$ and $B$ are considered equal if $A = \begin{pmatrix} B & 0 \\ 0 & I \\ \end{pmatrix}$ for some identity matrix $I$, or the other
way around. Then $GW_1(k)$ can be thought of as generated by elements of $GL(k)$, because of the relations $(i)$ and $(iii)$.

The following lemma is easily proved using the definition of $GW_1(k)$.

\begin{lemma} \label{basic-GW_1}
(i) Every elementary matrix represents 0 in $GW_1(k)$. \\
(ii) $(A) + (B) = (AB)$ for $A, B \in GL_n(k)$ in $GW_1(k)$
\end{lemma}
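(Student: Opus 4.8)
The plan is to use the polynomial homotopy relation $(iv)$ as the main engine: it lets one contract any elementary matrix to the identity, and this contraction will propagate to give multiplicativity. First I would settle $(i)$, then extract an auxiliary ``right multiplication by an elementary matrix is invisible'' principle, and finally combine it with the Whitehead factorization and the direct sum relation $(iii)$ to prove $(ii)$.

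For $(i)$, recall that an elementary matrix has the form $E_{ij}(c) = I_n + cE_{ij}$ with $i \neq j$ and $c \in k$, where $E_{ij}$ is the matrix unit. Since $i \ne j$, the matrix $I_n + tcE_{ij}$ is unipotent, hence invertible over $k[t]$ (its inverse is $I_n - tcE_{ij}$), so $A(t) = I_n + tcE_{ij}$ is an element of $GL_n(k[t])$ with $A(0) = I_n$ and $A(1) = E_{ij}(c)$. Relation $(iv)$ then gives $\big(E_{ij}(c)\big) = (I_n)$, and relation $(i)$ of Definition \ref{GW_l} gives $(I_n) = 0$; hence every elementary matrix is $0$ in $GW_1(k)$. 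The same homotopy yields the auxiliary principle I need: for any $M \in GL_n(k)$, the path $M(t) = M(I_n + tcE_{ij})$ lies in $GL_n(k[t])$, being a product of two matrices invertible over $k[t]$, so $(iv)$ gives $\big(ME_{ij}(c)\big) = (M)$. By induction, right multiplication by any product of elementary matrices preserves the class in $GW_1(k)$.

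For $(ii)$, by the direct sum relation $(iii)$ we have $(A) + (B) = \big(\operatorname{diag}(A,B)\big)$, where $\operatorname{diag}(A,B) = \begin{pmatrix} A & 0 \\ 0 & B \end{pmatrix} \in GL_{2n}(k)$. The key computation is
$$\begin{pmatrix} A & 0 \\ 0 & B \end{pmatrix} \begin{pmatrix} B & 0 \\ 0 & B^{-1} \end{pmatrix} = \begin{pmatrix} AB & 0 \\ 0 & I_n \end{pmatrix}.$$
By the Whitehead lemma, the factor $\begin{pmatrix} B & 0 \\ 0 & B^{-1} \end{pmatrix}$ is a product of elementary matrices in $GL_{2n}(k)$; concretely it factors through the block-unipotent matrices $\begin{pmatrix} I & B \\ 0 & I \end{pmatrix}$, $\begin{pmatrix} I & 0 \\ -B^{-1} & I \end{pmatrix}$ and the permutation $\begin{pmatrix} 0 & -I \\ I & 0 \end{pmatrix}$, each of which is a product of elementary transvections. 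The auxiliary principle then gives $\big(\operatorname{diag}(A,B)\big) = \big(\operatorname{diag}(AB, I_n)\big)$, and finally $(iii)$ together with $(i)$ gives $\big(\operatorname{diag}(AB, I_n)\big) = (AB) + (I_n) = (AB)$. Combining these yields $(A) + (B) = (AB)$.

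The only substantive ingredient beyond the defining relations is the Whitehead factorization of $\operatorname{diag}(B, B^{-1})$ into elementary matrices, and the point I expect to need the most care is realizing the whole contraction inside $GL_{2n}(k[t])$ rather than merely in $GL_{2n}(k)$. This is automatic here: each block-unipotent factor with off-diagonal block $C$ admits the linear path obtained by scaling $C$ to $tC$, which is invertible over $k[t]$, so the product of these paths connects $\operatorname{diag}(A,B)$ to $\operatorname{diag}(AB, I_n)$ through $GL_{2n}(k[t])$ in a single application of $(iv)$, making the homotopy hypothesis genuinely satisfiable.
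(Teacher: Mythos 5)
Your proof is correct, and part $(i)$ is essentially the paper's argument (the paper contracts the off-diagonal block of an upper block-triangular matrix via $t\mapsto Ct$, which specializes to your homotopy $I_n+tcE_{ij}$ for a single elementary matrix). For part $(ii)$, however, you take a genuinely different route. The paper conjugates both $A\oplus B$ and $I\oplus AB$ into companion-like forms $\left(\begin{smallmatrix} 0 & I \\ -AB & A+B \end{smallmatrix}\right)$ and $\left(\begin{smallmatrix} 0 & I \\ -AB & I+AB \end{smallmatrix}\right)$ and then interpolates linearly in the lower-right block, which stays invertible over $k[t]$ because the determinant of such a matrix is independent of that block. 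You instead invoke the Whitehead factorization of $\operatorname{diag}(B,B^{-1})$ into elementary transvections together with your auxiliary principle that right multiplication by an elementary matrix is invisible under relation $(iv)$ --- the classical argument that $GL/E$ is multiplicative --- and your care about lifting the whole contraction into $GL_{2n}(k[t])$ by scaling each unipotent factor is exactly the right point to worry about and is handled correctly. Both proofs are valid for $l=1$; the main thing the paper's companion-matrix homotopy buys is that it generalizes verbatim to $l\ge 2$ (Lemma \ref{boundary-Z}$(i)$,$(ii)$), where the Whitehead route breaks down because the elementary factors of $\operatorname{diag}(C,C^{-1})$ need not commute with the remaining matrices $A_2\oplus A_2,\dots,A_l\oplus A_l$, whereas your argument is arguably more elementary and self-explanatory in the $l=1$ case at hand.
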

\begin{proof}
$(i)$ An immediate consequence of the relation $(iv)$ is that the element of $GW_1(k)$ represented by
a $2\times 2$ block matrix $\displaystyle \begin{pmatrix} A & C \\ 0 & B  \end{pmatrix}$ is equal to the one represented by
$\displaystyle \begin{pmatrix} A & 0 \\ 0 & B  \end{pmatrix}$. To see this, simply take the invertible polynomial matrix
$\displaystyle \begin{pmatrix} A & Ct \\ 0 & B  \end{pmatrix}$ which have entries in $k[t]$ and put $t=0$ and $t=1$.
By the same reason, we see that an elementary matrix $E_{ij} \in GL_n(k)$, whose diagonal entries and $(i,j)$-th term are 1 and other entries are all 0,
represents 0 in $GW_1(k)$.

$(ii)$ In $GW_1(k)$, we have
$$\begin{pmatrix} B & 0 \\ 0 & A \end{pmatrix} = \begin{pmatrix} B & I \\ 0 & A \end{pmatrix}
= \begin{pmatrix} I & 0 \\ B & I \end{pmatrix} \begin{pmatrix} B & I \\ 0 & A \end{pmatrix} \begin{pmatrix} I & 0 \\ -B & I \end{pmatrix}
= \begin{pmatrix} 0 & I \\ -AB & A+B \end{pmatrix}$$
On the other hand, we also have
$$(AB) = \begin{pmatrix} I & 0 \\ 0 & AB \end{pmatrix}
=\begin{pmatrix} I & I \\ 0 & AB \end{pmatrix}
= \begin{pmatrix} I & 0 \\ I & I \end{pmatrix} \begin{pmatrix} I & I \\ 0 & AB \end{pmatrix} \begin{pmatrix} I & 0 \\ -I & I \end{pmatrix}
= \begin{pmatrix} 0 & I \\ -AB & I+AB \end{pmatrix}.$$
But, by letting $t=0$ and $t=1$ in the polynomial homotopy
$$ \begin{pmatrix} 0 & I \\ -AB & t(A+B) + (1-t)(I+AB) \end{pmatrix} \ \in \ GL_{2n}(k[t]),$$ we see that
$$ \begin{pmatrix} 0 & I \\ -AB & A+B \end{pmatrix} = \begin{pmatrix} 0 & I \\ -AB & I+AB \end{pmatrix} \ {\rm in}\ GW_1(k)$$
and we are done.
\end{proof}

\begin{corollary} \label{single-rep-GW_1}
Every element of $GW_1(k)$ can be written as $(A)$ for some single invertible matrix $A \in GL(k)$.
\end{corollary}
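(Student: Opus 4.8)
The plan is to reduce an arbitrary element of $GW_1(k)$ — which a priori is only a $\Z$-linear combination of generators $(A_i)$ — to a single generator in two stages: first absorb every integer coefficient (including negative ones) into the matrices themselves, and then collapse the resulting finite sum of generators into one via the direct sum relation. Both stages use only Lemma \ref{basic-GW_1} and the defining relations of Definition \ref{GW_l}, so this is essentially a bookkeeping argument about the group structure.

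First I would record that $-(A) = (A^{-1})$ for every $A \in GL_n(k)$. This follows because Lemma \ref{basic-GW_1}$(ii)$ gives $(A) + (A^{-1}) = (AA^{-1}) = (I_n)$, while $(I_n) = 0$ by relation $(i)$. Iterating Lemma \ref{basic-GW_1}$(ii)$ then shows $c\,(A) = (A^c)$ for every integer $c$, where $A^c$ is interpreted using the inverse when $c < 0$ and where $c = 0$ simply drops the term. Consequently, given an arbitrary element $\sum_i c_i (A_i)$ with $c_i \in \Z$ and $A_i \in GL_{n_i}(k)$, each summand rewrites as a single generator $(A_i^{c_i})$, so the element is a finite sum $(B_1) + \dots + (B_m)$ of generators with no coefficients to worry about.

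Finally, applying the direct sum relation $(iii)$ repeatedly collapses this sum into one generator:
$$ (B_1) + \dots + (B_m) = \left( \begin{pmatrix} B_1 & & \\ & \ddots & \\ & & B_m \end{pmatrix} \right), $$
a single $(A)$ with $A = \operatorname{diag}(B_1, \dots, B_m) \in GL(k)$, which is the asserted form. There is no real obstacle here; the only point demanding any care is the handling of negative coefficients, which is dispatched once we observe $-(A) = (A^{-1})$, and everything else is the direct sum relation applied finitely many times.
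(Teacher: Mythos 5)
Your argument is correct and is essentially the paper's own proof: the paper likewise reduces everything to the observation that $(A) - (B) = (A) + (B^{-1})$ (via Lemma \ref{basic-GW_1}$(ii)$ and relation $(i)$) combined with the direct sum relation, merely stating it more tersely. Your version just makes the absorption of integer coefficients and the iterated block-diagonal collapse explicit.
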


\begin{proof}
This follows from the simple observation that $\displaystyle (A) - (B) = (A) +  (B^{-1}) = \begin{pmatrix} A & 0 \\ 0 & B^{-1} \end{pmatrix}$ in $GW_1(k)$.
\end{proof}

\begin{prop} \label{GW1-field}
We have $GW_1(k) \simeq k^\times$, the multiplicative group of units in $k$.
\end{prop}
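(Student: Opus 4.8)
The plan is to exhibit two mutually inverse group homomorphisms between $GW_1(k)$ and $k^\times$. In one direction I would define $\phi : k^\times \to GW_1(k)$ by $a \mapsto (a)$, where $(a)$ denotes the class of the $1 \times 1$ matrix $[a] \in GL_1(k)$; Lemma \ref{basic-GW_1} $(ii)$ applied to $1 \times 1$ matrices gives $(a) + (b) = (ab)$, so $\phi$ is a homomorphism from the multiplicative group $k^\times$ to the additive group $GW_1(k)$. In the other direction I would use the determinant. For a generator $(A)$ with $A \in GL_n(k)$, I set its image to be $\det A \in k^\times$ and check that this respects each of the four relations of Definition \ref{GW_l}: the identity matrix has determinant $1$; similar matrices have equal determinants; for a block diagonal matrix $\det$ is multiplicative, which matches the direct sum relation $(iii)$; and for $A(t) \in GL_n(k[t])$ one has $\det A(t) \in k[t]^\times = k^\times$, a nonzero constant, so $\det A(0) = \det A(1)$, giving the homotopy relation $(iv)$. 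Thus $\det$ descends to a homomorphism $\det : GW_1(k) \to k^\times$.

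It is immediate that $\det([a]) = a$, so the composite $\det \circ \phi$ is the identity on $k^\times$. The crux is the reverse composite: I must show that $(A) = (\det A)$ in $GW_1(k)$ for every $A \in GL_n(k)$, where on the right $\det A$ denotes the class of the $1 \times 1$ matrix $[\det A]$. The main input here is Gaussian elimination over a field: every $A \in GL_n(k)$ factors as $A = T \cdot \operatorname{diag}(\det A, 1, \dots, 1)$, where $T = A \cdot \operatorname{diag}((\det A)^{-1}, 1, \dots, 1)$ lies in $SL_n(k)$ and is therefore a product of elementary matrices. By Lemma \ref{basic-GW_1} $(ii)$ we get $(A) = (T) + \bigl(\operatorname{diag}(\det A, 1, \dots, 1)\bigr)$, and writing $T$ as a product of elementary matrices and applying Lemma \ref{basic-GW_1} $(ii)$ repeatedly shows $(T) = 0$, since each elementary matrix represents $0$ by Lemma \ref{basic-GW_1} $(i)$. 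Finally the direct sum relation $(iii)$ together with the identity relation $(i)$ collapses $\bigl(\operatorname{diag}(\det A, 1, \dots, 1)\bigr)$ to $([\det A])$. Hence $(A) = (\det A) = \phi(\det A)$; by Corollary \ref{single-rep-GW_1} every class is of the form $(A)$, so $\phi \circ \det$ is the identity on all of $GW_1(k)$, and the two maps are mutually inverse.

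I expect the main obstacle to be exactly the identity $(A) = (\det A)$, and more precisely the clean use of the factorization of $A$ through elementary matrices. One small subtlety is that Lemma \ref{basic-GW_1} $(i)$ is stated for the specific elementary matrices $E_{ij}$ whose off-diagonal entry is $1$, whereas the factorization needs matrices $I + \lambda e_{ij}$ with arbitrary $\lambda \in k$; but the same homotopy argument used in Lemma \ref{basic-GW_1} (replace the off-diagonal entry by $\lambda t$ and compare $t = 0$ with $t = 1$) shows every such matrix likewise represents $0$. Once this is in hand, the verification that $\det$ respects the four defining relations is routine, and the isomorphism $GW_1(k) \simeq k^\times$ follows formally from the two homomorphisms composing to the identity in both directions.
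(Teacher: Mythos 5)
Your proposal is correct and follows essentially the same route as the paper: the determinant gives the map $GW_1(k)\to k^\times$, and the factorization of any $A\in GL_n(k)$ through $\operatorname{diag}(\det A,1,\dots,1)$ times a product of elementary matrices (trivial by Lemma \ref{basic-GW_1}) gives the inverse. Your extra care with elementary matrices having arbitrary off-diagonal entry $\lambda$ is a reasonable observation, and it is already covered by the paper's block-triangular homotopy argument, which allows an arbitrary block $C$.
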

\begin{proof}
Define the map $\phi: GW_1(k) \rightarrow k^\times$ by $\displaystyle \phi \big(\sum_{i} n_i (A_i)\big) = \prod_{i} (\det A_i )^{n_i}$.
Then, $\phi$ is easily checked to be well-defined using Definition \ref{GW_l} and is clearly surjective.

On the other hand, let $(A)$, where $A$ is an invertible matrix in $GL(k)$, be an arbitrary element in the kernel of $\phi$ (c.f. Corollary \ref{single-rep-GW_1}).
When the determinant of $A$ is 1,
it is well-known (c.f. 2.2.6 \& 2.3.2 in \cite{MR1282290}) that $A$ is a product of elementary matrices, which are trivial in
$GW_1(k)$ by Lemma \ref{basic-GW_1} $(i)$.
\end{proof}

To understand the Goodwillie group $GW_l(k)$ as a homology group of a complex, we introduce the following notation.
\begin{definition} \label{GW-complex}
$C(k[t_1, \dots, k_d], l)$($d \ge 0, l \ge 1$) is defined to be the abelian group
generated by $l$-tuples $\left( A_1,\dots,A_l \right) = \left( A_1(t_1, \dots, t_d),\dots,A_l(t_1, \dots, t_d)\right)$
where $A_1=A_1(t_1, \dots, t_d),\dots,A_l=A_l(t_1, \dots, t_d)$ are commuting matrices in
$GL_n(k [t_1, \dots, t_d])$ for various $n \ge 1$, subject to the following 3 kinds of relations. \\
(i) (Identity Matrices) $\left( A_1,\dots,A_l \right) = 0$ when $A_i$, for some $i$, is equal to the identity matrix $I_n$ of rank $n$. \\
(ii) (Similar Matrices) $\left( A_1,\dots,A_l \right) = (S A_1 S^{-1}, \dots, S A_l S^{-1})$
                        for any $S \in GL_n(k [t_1, \dots, t_d])$. \\
(iii)  (Direct Sum) $\displaystyle \left( A_1,\dots,A_l \right) + \left( B_1,\dots,B_l \right)
= \left( \begin{pmatrix} A_1 & 0 \\ 0 & B_1 \end{pmatrix}, \dots, \begin{pmatrix} A_l & 0 \\ 0 & B_l \end{pmatrix} \right)$
for commuting $A_1,\dots,A_l \in GL_n(k [t_1, \dots, t_d])$ and commuting $B_1, \dots, B_l \in GL_m(k [t_1, \dots, t_d])$.
\end{definition}

Our main interest in this article is when $d=0$ and $d=1$.
When $d=1$, we set $t=t_1$ and we define the boundary map $\partial: C(k[t], l) \rightarrow C(k, l)$ by sending
$\left( A_1(t),\dots,A_l(t) \right)$ in $C(k[t], l)$ to $\left( A_1(1),\dots,A_l(1) \right) - \left( A_1(0),\dots,A_l(0) \right)$ in $C(k, l)$.
Then, the Goodwillie group $GW_l(k)$ is nothing but the cokernel of the map $\partial: C(k[t], l) \rightarrow C(k, l)$.
We will denote by the same notation $(A_1,\dots,A_l)$ the element in $C(k, l) / \partial C(k[t], l) = GW_l(k)$
represented by $(A_1,\dots,A_l)$, by abuse of notation, whenever $A_1,\dots,A_l$ are commuting matrices in $GL_n(k)$.

Before we proceed to the next section, we define products in the Goodwillie groups.
Recall that, for $A=(a_{ij}) \in GL_m(k)$ and $B=(b_{ij}) \in GL_n(k)$, the Kronecker product $A \otimes B$ is defined to be the block matrix in $GL_{mn}(k)$
whose $(i,j)$-th block ($1 \le i,j \le m$) is given by $a_{ij} B$, i.e.,
$$A \otimes B = \begin{pmatrix} a_{11} B & \dots & a_{1m}B \\
                            \vdots & \ddots & \vdots \\
                            a_{m1}B & \dots & a_{mm} B
\end{pmatrix}.$$
One of the basic properties of the Kronecker product of matrices is that
$(A \otimes B) (C \otimes D) = AC \otimes BD$ when $A, C \in GL_m(k)$ and $B, D \in GL_n(k)$
and thus $(A \otimes B)^{-1} = A^{-1} \otimes B^{-1}$.

\begin{definition} \label{product-GW}
A product $\cdot : GW_p(k) \times GW_q(k) \rightarrow GW_{p+q}(k)$ for $p,q \ge 1$ is defined as follows.
For commuting matrices $A_1, \dots, A_p \in GL_n(k)$ and commuting matrices $B_1, \dots, B_q \in GL_m(k)$,
$(A_1, \dots, A_p) \cdot (B_1, \dots, B_q) \in GW_{p+q}(k)$ is represented by the symbol
$(A_1 \otimes I_n, \dots, A_p \otimes I_n, I_m \otimes B_1, \dots, I_m \otimes B_q )$ where
$I_m$ and $I_n$ are the identity matrices of rank $m$ and $n$, respectively.
\end{definition}

Let us verify that the product $\cdot : GW_p(k) \times GW_q(k) \rightarrow GW_{p+q}(k)$ is well-defined.
First of all, $A_i \otimes I_n$ and $I_m \otimes B_j$ ($1 \le i \le p$, $1 \le j \le q$) are easily seen to be commuting.
If either the first factor from $GW_p(k)$ or the second factor from $GW_q(k)$ is in one of the relations in Definition \ref{GW_l}, let us check that
their product as in Definition \ref{product-GW} is also in the relations.
We will check this fact only when the first factor from $GW_p(k)$ is in the relations.\\
$(i)$ If, say, $A_i$ is the identity matrix, then $A_i \otimes I_n$ is the identity matrix. \\
$(ii)$ If $S$ is in $GL_m(k)$, then we have, for $T =S \otimes I_n$,
\begin{multline*} \left((S^{-1}A_1 S) \otimes I_n, \dots, (S^{-1} A_p S ) \otimes I_n, I_m \otimes B_1, \dots, I_m \otimes B_q \right) \\
=\left( T^{-1} (A_1 \otimes I_n) T \otimes I_n, \dots, T^{-1} (A_p \otimes I_n) T, T^{-1} (I_m \otimes B_1) T, \dots, T^{-1}(I_m \otimes B_q)T \right).
\end{multline*}
$(iii)$ For commuting $A_1, \dots, A_p \in GL_m(k)$, commuting $C_1, \dots, C_p \in GL_r(k)$, and commuting $B_1, \dots, B_q \in GL_n(k)$, we have
\begin{multline*}
\left(\begin{pmatrix} A_1 & 0\\ 0 & C_1 \end{pmatrix} \otimes I_n, \dots, \begin{pmatrix} A_p & 0\\ 0 & C_p \end{pmatrix} \otimes I_n, \
I_{m+r} \otimes B_1, \dots, I_{m+r} \otimes B_q \right) \\
= \Biggl(\begin{pmatrix} A_1 \otimes I_n & 0 \\ 0 & C_1 \otimes I_n \end{pmatrix}, \dots, \begin{pmatrix} A_p \otimes I_n & 0 \\ 0 & C_p \otimes I_n \end{pmatrix},
  \begin{pmatrix} I_m \otimes B_1 & 0 \\ 0 & I_r \otimes B_1 \end{pmatrix}, \dots, \begin{pmatrix} I_m \otimes B_q & 0 \\ 0 & I_r \otimes B_q \end{pmatrix} \Biggr).
\end{multline*}
$(iv)$ If $A_1(t), \dots, A_p(t)$ are commuting matrices in $GL_m(k[t])$, then
$A_1(t) \otimes I_n, \dots, A_p(t) \otimes I_n, I_m \otimes B_1, \dots, I_m \otimes B_q$ are commuting matrices in $GL_{mn}(k[t])$ and it is
equal to $(A_1(0) \otimes I_n, \dots, A_p(0) \otimes I_n, I_m \otimes B_1, \dots, I_m \otimes B_q )$ and
$(A_1(1) \otimes I_n, \dots, A_p(1) \otimes I_n, I_m \otimes B_1, \dots, I_m \otimes B_q )$ when $t=0$ and $t=1$, respectively.

We remark that, for commuting matrices $A, B \in GL_m(k)$, it is not necessarily true that $(A) \cdot (B) = (A, B)$ in $GW_2(k)$ unless $A$ and $B$
are $1 \times 1$ matrices.

For the next lemma, we set $GW_0(k) = \Z$, the ring of integers. Then the products $\cdot : GW_0 (k) \times GW_l(k) \rightarrow GW_l(k)$ and
$\cdot : GW_l (k) \times GW_0(k) \rightarrow GW_l(k)$, for $l \ge 0$ can be naturally defined
by considering each $GW_l(k)$ as a $\Z$-module which arises from its abelian group structure.

\begin{lemma} \label{GW-ring}
The product $\cdot : GW_p(k) \times GW_q(k) \rightarrow GW_{p+q}(k)$ makes $\displaystyle \bigoplus_{l \ge 0} GW_l(k)$ into a graded ring.
\end{lemma}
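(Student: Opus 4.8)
The plan is to establish, on top of the well-definedness already verified before the lemma, the three remaining axioms of an (associative, unital) graded ring: $\Z$-bilinearity of each product $GW_p(k) \times GW_q(k) \to GW_{p+q}(k)$, associativity, and the existence of a two-sided identity. Since the grading is respected by the very shape of Definition \ref{product-GW} and each $GW_l(k)$ is an abelian group by Definition \ref{GW_l}, these are all that is left to check.

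First I would observe that the four compatibility checks carried out just above the lemma already deliver bilinearity. They show that the formula of Definition \ref{product-GW} on pairs of generators respects, in each variable separately (the second variable by the same argument), every defining relation of $GW$. In particular, respecting the Direct Sum relation (iii) of Definition \ref{GW_l} says that the product of a block-diagonal generator with a fixed generator equals, in $GW_{p+q}(k)$, the direct sum of the two corresponding products; as addition in $GW$ is represented by direct sum, this is exactly additivity in that slot. Hence the biadditive assignment $\big(\sum_i n_i g_i, \sum_j m_j h_j\big) \mapsto \sum_{i,j} n_i m_j\,(g_i \cdot h_j)$ descends to a well-defined $\Z$-bilinear map on $GW_p(k) \times GW_q(k)$.

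The heart of the argument is associativity, which I would reduce to the associativity of the Kronecker product together with the identity $I_a \otimes I_b = I_{ab}$. Take generators $x=(A_1,\dots,A_p)$, $y=(B_1,\dots,B_q)$, $z=(C_1,\dots,C_r)$ with $A_i \in GL_n(k)$, $B_j \in GL_m(k)$, $C_h \in GL_s(k)$. Writing out $(x\cdot y)\cdot z$ yields the tuple whose entries are $(A_i \otimes I_m)\otimes I_s$, $(I_n \otimes B_j)\otimes I_s$ and $I_{nm}\otimes C_h$, all in $GL_{nms}(k)$, while $x\cdot(y\cdot z)$ yields $A_i \otimes I_{ms}$, $I_n \otimes (B_j \otimes I_s)$ and $I_n \otimes (I_m \otimes C_h)$. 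Applying $(A_i \otimes I_m)\otimes I_s = A_i \otimes I_{ms}$, $(I_n \otimes B_j)\otimes I_s = I_n \otimes (B_j \otimes I_s)$ and $I_{nm}\otimes C_h = I_n \otimes (I_m \otimes C_h)$, the two tuples agree entry by entry, so the iterated products are represented by the very same symbol. (That all matrices involved mutually commute is routine, just as observed for the defining product above.) Associativity on arbitrary elements then follows formally from the bilinearity of the previous step.

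Finally I would take $1 \in \Z = GW_0(k)$ as the identity: by the definition of the products $GW_0(k)\times GW_l(k) \to GW_l(k)$ and $GW_l(k)\times GW_0(k) \to GW_l(k)$ through the $\Z$-module structure, one has $1\cdot x = x = x\cdot 1$, and $\Z$-bilinearity secures the compatibility $(n\cdot x)\cdot y = n\,(x\cdot y) = x\cdot(n\cdot y)$ for every $n \in \Z$. I expect the only genuinely delicate point to be the associativity bookkeeping: one must keep the identity-matrix ranks straight so that both iterated products land in $GL_{nms}(k)$ and reduce to literally the same tuple, after which associativity of $\otimes$ does all the work; the bilinearity and unit axioms are then formal consequences.
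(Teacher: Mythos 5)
Your proof is correct and follows essentially the same route as the paper: the paper's (much terser) proof likewise obtains associativity from $(A\otimes B)\otimes C = A\otimes(B\otimes C)$ and notes that distributivity is already contained in the well-definedness checks preceding the lemma, with the unit coming from $GW_0(k)=\Z$. You simply supply the bookkeeping of identity-matrix ranks that the paper leaves implicit (and, incidentally, use the corrected subscripts $A_i\otimes I_m$, $I_n\otimes B_j$ that Definition \ref{product-GW} clearly intends).
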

\begin{proof}
To show that it is a graded ring, we need to check that the product is associative and distributive with respect to the addition.
Associativity is easily verified using the property $(A \otimes B) \otimes C = A \otimes (B \otimes C)$ of the Kronecker product of matrices.
We already have the distributive law during the construction.
\end{proof}

  \section{Some properties of Goodwillie groups} \label{sec-prop-Goodwillie}
The basic reference for this section is \cite{MSMulti}.

\begin{lemma} \label{boundary-Z}
The following equalities hold in $GW_l(k)$.

(i) $(B C,A_2,\dots,A_l)=(B,A_2,\dots,A_l)+(C,A_2,\dots,A_l)$,
for commuting matrices $B, C, A_2,\dots,A_l \in GL_n(k)$;

 Also, $(A_1, \dots, A_{i-1}, B C, A_{i+1}, \dots, A_l)
 =(A_1, \dots, A_{i-1}, B, A_{i+1}, \dots, A_l)
 +(A_1, \dots, A_{i-1}, C, A_{i+1}, \dots, A_l)$
for commuting matrices $B, C, A_1,\dots,A_{i-1}, A_{i+1},\dots, A_l$ in $GL_n(k)$;

(ii) $(A_1,\dots,A_i, \dots, A_j, \dots, A_l) = - (A_1,\dots,A_j, \dots, A_i, \dots, A_l)$,
for commuting matrices $A_1,\dots,A_l \in GL_n(k)$;

(iii) $(A_1,\dots,A_i, \dots, A_j, \dots, A_l)=0$, whenever $A_i= -A_j$
for commuting $A_1,\dots,A_l \in GL_n(k)$;

(iv) $(c_1, \dots, \alpha,\dots, 1-\alpha,\dots, c_l)=0$ in $GW_l(k)$, for $\alpha \in k-\{0,1\}$ and $c_i \in k^\times$
for each appropriate $i$.
\end{lemma}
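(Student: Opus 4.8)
The plan is to establish the four relations in the order (i), (iv), (iii), (ii), because (i) is a direct matrix computation whereas (ii)--(iv) are the Goodwillie-group shadows of the Milnor relations of Lemma \ref{Milnor-basic-relation}: once (i) and a Steinberg-type relation are in hand, the formal manipulations of Section \ref{milnor-K} transplant almost verbatim. The genuinely new input is the Steinberg relation (iv), and I expect it to be the main obstacle.

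For the multilinearity (i), I would generalize the computation in Lemma \ref{basic-GW_1}(ii) by carrying the extra entries along as repeated diagonal blocks. To prove additivity in the first slot, start from the direct-sum relation, which gives $(B,A_2,\dots,A_l)+(C,A_2,\dots,A_l)=\bigl(\mathrm{diag}(B,C),\mathrm{diag}(A_2,A_2),\dots,\mathrm{diag}(A_l,A_l)\bigr)$, and likewise $(BC,A_2,\dots,A_l)=\bigl(\mathrm{diag}(I,BC),\mathrm{diag}(A_2,A_2),\dots\bigr)$ after adjoining the trivial symbol $(I,A_2,\dots,A_l)=0$. In every slot $m\ge 2$ the carried block is $\mathrm{diag}(A_m,A_m)$, so conjugation by the elementary block matrices $\bigl(\begin{smallmatrix} I&0\\ B&I\end{smallmatrix}\bigr)$ and $\bigl(\begin{smallmatrix} I&0\\ I&I\end{smallmatrix}\bigr)$, together with the homotopies inserting an off-diagonal $I$, all fix these blocks, precisely because $B$, $C$ and $BC$ commute with every $A_m$. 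As in Lemma \ref{basic-GW_1}(ii) this rewrites the two sides as $\bigl(\bigl(\begin{smallmatrix}0&I\\ -BC& B+C\end{smallmatrix}\bigr),\mathrm{diag}(A_2,A_2),\dots\bigr)$ and $\bigl(\bigl(\begin{smallmatrix}0&I\\ -BC& I+BC\end{smallmatrix}\bigr),\mathrm{diag}(A_2,A_2),\dots\bigr)$, and the final step is the polynomial homotopy $t(B+C)+(1-t)(I+BC)$ placed in the lower-right corner: the resulting matrix stays invertible over $k[t]$ because $\det\bigl(\begin{smallmatrix}0&I\\ -BC&\ast\end{smallmatrix}\bigr)=\pm\det(BC)\in k^\times$ independently of $t$, and it continues to commute with each $\mathrm{diag}(A_m,A_m)$. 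Running the same computation in slot $i$ gives additivity in an arbitrary slot.

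Granting (i), the skew-symmetry (ii) is formal from (i) and (iii): apply (iii) to $A_iA_j$ in slot $i$ and $-A_iA_j$ in slot $j$, then expand both slots by multilinearity to obtain $0=(\dots,A_i,\dots,A_j,\dots)+(\dots,A_j,\dots,A_i,\dots)$, exactly as in Lemma \ref{Milnor-basic-relation}(ii). For the scalar instance of (iii) I would copy the proof of Lemma \ref{Milnor-basic-relation}(iii): from (iv) the symbols $(a,1-a)$ and $(a^{-1},1-a^{-1})$ vanish, and multilinearity collapses their sum to $\bigl(a,(1-a)(1-a^{-1})^{-1}\bigr)=(a,-a)$. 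To upgrade (iii) to commuting matrices with $A_i=-A_j$, I would reduce $A_i$ to rational canonical form using the conjugation and direct-sum relations of Definition \ref{GW_l}, reducing to a single cyclic block; identifying such a block with multiplication by a generator $\beta$ of the corresponding residue field $L$ and invoking the elementary restriction-of-scalars transfer $GW_l(L)\to GW_l(k)$, the symbol becomes the transfer of $(\beta,-\beta)=0$, hence vanishes.

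The main obstacle is the Steinberg relation (iv), $(c_1,\dots,\alpha,\dots,1-\alpha,\dots,c_l)=0$, which carries the arithmetic content and, being the single nontrivial defining relation of Milnor K-theory, cannot be purely formal. By multilinearity it suffices to kill $(\alpha,1-\alpha)$ in $GW_2(k)$, and the task is to produce a commuting family $\bigl(A(t),B(t)\bigr)$ in $GL_n(k[t])$ with $\det A(t),\det B(t)\in k^\times$ connecting this symbol to one containing an identity matrix. The delicate point is the determinant constraint: the naive path $B(t)=I-(1-t)A$ from $I-\alpha$ to $I$ is illegal because $\det\bigl(I-(1-t)A\bigr)$ is a nonconstant polynomial in $t$ and hence not a unit of $k[t]$. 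A legitimate homotopy must move eigenvalues while keeping both determinants constant, and constructing it explicitly --- equivalently, realizing the Steinberg symbol as a boundary in the complex $C(k[t],2)$ of Definition \ref{GW-complex} --- is the heart of the argument.
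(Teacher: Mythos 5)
Your treatment of (i) is correct and is essentially the paper's argument: carry the extra entries along as repeated diagonal blocks $A_m\oplus A_m$ and run the homotopy $\bigl(\begin{smallmatrix}0&I\\-BC&t(I+BC)+(1-t)(B+C)\end{smallmatrix}\bigr)$ from Lemma \ref{basic-GW_1}(ii). But the rest of the proposal has a genuine gap: you never prove the Steinberg relation (iv). You correctly identify it as ``the heart of the argument'' and explain why the naive homotopy $B(t)=I-(1-t)A$ fails, but then you stop; no construction is offered. Since in your scheme (iii) is deduced from (iv) (via Milnor's $\{a,-a\}=\{a,1-a\}+\{a^{-1},1-a^{-1}\}$ manipulation) and (ii) is deduced from (iii), three of the four assertions remain unproved. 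The paper's proof of (iv) is a substantial explicit construction: one takes $A(t)$ to be the companion matrix of $X^3+(a(t-1)-bt)X^2+(b(t-1)-at)X+ab$, so that $\det A(t)=-ab$ and $\det(I_3-A(t))=(1-a)(1-b)$ are both units of $k[t]$, and a rational-canonical-form computation of $\partial\bigl(2(A(t),I_3-A(t))\bigr)$ yields $(a,1-a)=(b,1-b)$ for all $a,b\in k\setminus\{0,1\}$; a separate arithmetic step (using $2(e,1-e)=0$, $2(e,1+e)=0$, hence $(e^2,1-e^2)=0$, then $(\alpha,1-\alpha)=(e^2,1-e^2)$) forces the common value to vanish. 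None of this is guessable from the axioms, which is exactly why your writeup stalls there.

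There is also a structural problem with your proposed order (i), (iv), (iii), (ii). The paper proves (ii) \emph{directly} by a homotopy --- the boundary of $\bigl(A_1\oplus A_1,\dots,H(t),\dots,H(t),\dots,A_l\oplus A_l\bigr)$ with $H(t)$ built from $A_iA_j$ expands by (i) into $(\dots,A_i,\dots,A_j,\dots)+(\dots,A_j,\dots,A_i,\dots)$ --- and then proves (iii) \emph{from} (ii), writing $\bigl(\begin{smallmatrix}-A&0\\0&-A\end{smallmatrix}\bigr)=\bigl(\begin{smallmatrix}0&I\\-A&0\end{smallmatrix}\bigr)^2$ so that the offending term becomes $2(M,M)=0$. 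Its proof of (iv) then \emph{uses} (ii) and (iii) (e.g.\ $2(e,1-e)=(e,1-e)-(1-e,e)$, and the $\F_3$ case). So if you were to fill your gap by importing the paper's proof of (iv), your dependency chain (iv)$\Rightarrow$(iii)$\Rightarrow$(ii) would become circular; you would need an independent proof of (iv) not relying on skew-symmetry. Separately, your reduction of the matrix case of (iii) to the scalar case via rational canonical form and transfer is under-specified: with other commuting matrices $A_1,\dots,A_l$ present you cannot just put one matrix in cyclic form, you need a simultaneous decomposition of $k^n$ as a $k[t_1,\dots,t_l]$-module (this is Lemma \ref{symbol-splits}, proved much later and for a different purpose), and the transfer maps you invoke are only introduced in Section \ref{sec-transfer-Goodwillie}. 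The paper avoids all of this by proving (ii) and (iii) by direct homotopies.
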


\begin{proof}

$(i)$ Let $H(t)$ be the $2n \times 2n$ matrix
$$\begin{pmatrix} 0 & I_n \\
              -B C & t(I_n+B C)+(1-t)(B+C) \end{pmatrix}. $$
Then, $H(t)$ is in $GL_{2n}(k[t])$, and the image of $\bigl( H(t),\, A_2 \oplus A_2, \dots, A_l \oplus A_l \bigr)$
under $\partial: C(k[t], l) \rightarrow C(k, l)$ is
$(I_n \oplus B C, A_2 \oplus A_2, \dots, A_l \oplus A_l)
-(B \oplus C,A_2 \oplus A_2, \dots, A_l \oplus A_l)
=(B C,A_2, \dots, A_l)
-(B,A_2, \dots, A_l)-(C,A_2, \dots, A_l)$.

The proof is similar for the other cases.

$(ii)$ We let $H(t)$ be the matrix
$$\begin{pmatrix} 0 & I_n \\
              -AB & t(I_n+AB)+(1-t)(A+B) \end{pmatrix}. $$

Then the image of $\bigl( A_1 \oplus A_1,\dots,H(t), \dots, H(t),\dots, A_l \oplus A_l \bigr)$ under $\partial$ is
\begin{align*}
&(A_1,\dots,A_i A_j, \dots, A_i A_j,\dots, \dots, A_l)
-(A_1,\dots,A_i, \dots, A_i,\dots, A_l)
-(A_1,\dots,A_j, \dots, A_j,\dots,  A_l)\\
&= \bigl( (A_1,\dots,A_i, \dots, A_i,\dots, A_l)
+(A_1,\dots,A_i, \dots, A_j,\dots, A_l) \\
& \quad +(A_1,\dots,A_j, \dots, A_i,\dots, A_l)
+(A_1,\dots,A_j, \dots, A_j,\dots, A_l) \bigr) \\
& \quad \quad -(A_1,\dots,A_i, \dots, A_i,\dots,  A_l)
-(A_1,\dots,A_j, \dots, A_j,\dots, A_l) \\
&=(A_1,\dots,A_j, \dots, A_i,\dots, A_l) + (A_1,\dots,A_j, \dots, A_i,\dots, A_l)
\quad \text {in } GW_l(k) \text{ by} \ (i).
\end{align*}

$(iii)$ Let $A=-A_i=A_j$.
It suffices to show that $(-A, A) = 0$ in $GW_2(k)$ as the other cases with $l \ge 3$ follow from this fact
and $(ii)$ using the product structure in Definition \ref{product-GW}.
The image of
$$ \left( \begin{pmatrix} -A & 0 \\ 0 & -A \end{pmatrix}, \begin{pmatrix} 0 & I_n \\ -A & t(A+I_n) \end{pmatrix} \right)$$
under $\partial$ is equal to
{\allowdisplaybreaks \begin{align*}
& \left( \begin{pmatrix} -A & 0 \\ 0 & -A \end{pmatrix}, \begin{pmatrix} 0 & I_n \\ -A & A+I_n \end{pmatrix} \right)
 -\left( \begin{pmatrix} -A & 0 \\ 0 & -A \end{pmatrix}, \begin{pmatrix} 0 & I_n \\ -A & 0 \end{pmatrix} \right) \\
& =\left( \begin{pmatrix} -A & 0 \\ 0 & -A \end{pmatrix},  \begin{pmatrix} A & I_n \\ 0 & I_n \end{pmatrix} \right)
 -\left( \begin{pmatrix} -A & 0 \\ 0 & -A \end{pmatrix}, \begin{pmatrix} 0 & I_n \\ -A & 0 \end{pmatrix} \right) \\
&=(-A, A)
-\left(\begin{pmatrix} -A & 0 \\ 0 & -A \end{pmatrix}, \begin{pmatrix} 0 & I_n \\ -A & 0 \end{pmatrix} \right).
\end{align*}}
So it is enough to show that the element
$$ \left(\begin{pmatrix} -A & 0 \\ 0 & -A \end{pmatrix}, \begin{pmatrix} 0 & I_n \\ -A & 0 \end{pmatrix} \right)$$
vanishes in $GW_l(k)$. But it is equal to
\begin{align*}
\left({\begin{pmatrix} 0 & I_n \\ -A & 0 \end{pmatrix}}^2, \begin{pmatrix} 0 & I_n \\ -A & 0 \end{pmatrix} \right)
= 2 \left({\begin{pmatrix} 0 & I_n \\ -A & 0 \end{pmatrix}}, \begin{pmatrix} 0 & I_n \\ -A & 0 \end{pmatrix} \right),
\end{align*}
which vanishes in $GW_l(k)$ by $(ii)$ above.

$(iv)$ First, we show that, in $GW_l(k)$,
\begin{align} \label{inv-a.1-a}
(c_1, \dots, b,\dots, 1-b, \dots, c_l)=(c_1, \dots, a,\dots, 1-a,\dots, c_l),
\end{align}
for $a, b \in k-\{0,1\}$ and $c_i \in k^\times$ for each appropriate $i$.
To show (\ref{inv-a.1-a}), it suffices to show that $(a, 1-a) = (b, 1-b)$ in $GW_2(k)$ as the other cases follow from this fact by
multiplying by $(c_1, \dots, c_l)$ (with an appropriate omission of indices) using the product structure in Definition \ref{product-GW}.
Let us take
$$A(t) = \begin{pmatrix} 0 & 0 & -ab \\
                        1 & 0 & at + b(1-t) \\
                        0 & 1 & a(1-t)+bt \end{pmatrix}$$
to be the companion matrix of the monic polynomial $X^3 + (a(t-1)-bt)X^2 + (b(t-1)-at)X+ab$ with coefficients
in $k[t]$. Then, both $A(t)$ and $I_3-A(t)$ are in $GL_3(k[t])$ with determinant $-ab$ and $(1-a)(1-b)$, respectively.
Note also that the eigenvalues of $A(0)$ and $A(1)$ are $a, \sqrt{b},-\sqrt{b}$ and $-\sqrt{a},\sqrt{a}, b$, respectively, in some suitable algebraic extension of $k$.
Take $z=2 \bigl( A(t), I_3-A(t) \bigr)$. By the theory of rational canonical form, $\partial z$ is equal to
{\allowdisplaybreaks
\begin{align*}
& 2 \left( (b ,1-b)
+\left( \begin{pmatrix} 0 & 1 \\ a & 0 \end{pmatrix}, \begin{pmatrix} 1 & -1 \\ -a & 1 \end{pmatrix} \right) \right)
 -2 \left( (a, 1-a)
+\left( \begin{pmatrix} 0 & 1 \\ b & 0 \end{pmatrix}, \begin{pmatrix} 1 & -1 \\ -b & 1 \end{pmatrix} \right) \right) \\
&= -2 (a, 1-a) + 2(b,1-b)
 -\left( {\begin{pmatrix} 0 & 1 \\ b & 0 \end{pmatrix}}^2,  \begin{pmatrix} 1 & -1 \\ -b & 1 \end{pmatrix} \right)
 +\left({\begin{pmatrix} 0 & 1 \\ a & 0 \end{pmatrix}}^2, \begin{pmatrix} 1 & -1 \\ -a & 1 \end{pmatrix} \right)\\
&= \left(\begin{pmatrix} b & 0 \\ 0 & b \end{pmatrix},  \begin{pmatrix} 1-b & 0 \\ 0 & 1-b \end{pmatrix} \right)
 - \left(\begin{pmatrix} b & 0 \\ 0 & b \end{pmatrix}, \begin{pmatrix} 1 & -1 \\ -b & 1 \end{pmatrix} \right) \\
&\quad \quad -\left( \begin{pmatrix} a & 0 \\ 0 & a \end{pmatrix}, \begin{pmatrix} 1-a & 0 \\ 0 & 1-a \end{pmatrix} \right)
+ \left( \begin{pmatrix} a & 0 \\ 0 & a \end{pmatrix}, \begin{pmatrix} 1 & -1 \\ -a & 1 \end{pmatrix} \right) \\
&=\left(\begin{pmatrix} b & 0 \\ 0 & b \end{pmatrix},
\begin{pmatrix} 1-b & 0 \\ 0 & 1-b \end{pmatrix} {\begin{pmatrix} 1 & -1 \\ -b & 1 \end{pmatrix}}^{-1} \right)
 -\left( \begin{pmatrix} a & 0 \\ 0 & a \end{pmatrix},
 \begin{pmatrix} 1-a & 0 \\ 0 & 1-a \end{pmatrix} {\begin{pmatrix} 1 & -1 \\ -a & 1 \end{pmatrix}}^{-1}  \right)\\
&=\left( \begin{pmatrix} b & 0 \\ 0 & b \end{pmatrix}, \begin{pmatrix} 1 & 1 \\ b & 1 \end{pmatrix} \right)
-\left(\begin{pmatrix} a & 0 \\ 0 & a \end{pmatrix},  \begin{pmatrix} 1 & 1 \\ a & 1 \end{pmatrix} \right) \\
&=\left(\begin{pmatrix} b & 0 \\ 0 & b \end{pmatrix}, \begin{pmatrix} {\frac {-b} {1-b}} & {\frac 1 {1-b}} \\ 0 & 1 \end{pmatrix}
            \begin{pmatrix} 1 & 1 \\ b & 1 \end{pmatrix} {\begin{pmatrix} {\frac {-b}{1-b}} & {\frac 1 {1-b}} \\ 0 & 1 \end{pmatrix}}^{-1}  \right)
-\left(\begin{pmatrix} a & 0 \\ 0 & a \end{pmatrix}, \begin{pmatrix} {\frac {-a} {1-a}} & {\frac 1 {1-a}} \\ 0 & 1 \end{pmatrix}
           \begin{pmatrix} 1 & 1 \\ a & 1 \end{pmatrix} {\begin{pmatrix} {\frac {-a}{1-a}} & {\frac 1 {1-a}} \\ 0 & 1 \end{pmatrix}}^{-1} \right)\\
&=\left(\begin{pmatrix} b & 0 \\ 0 & b \end{pmatrix}, \begin{pmatrix} 0 & 1 \\ b-1 & 2 \end{pmatrix} \right)
-\left( \begin{pmatrix} a & 0 \\ 0 & a \end{pmatrix}, \begin{pmatrix} 0 & 1 \\ a-1 & 2 \end{pmatrix} \right).
\end{align*}}
By taking image under $\partial$ of the element
{\allowdisplaybreaks \begin{align*}
&\left( \begin{pmatrix} b & 0 \\ 0 & b \end{pmatrix}, \begin{pmatrix} 0 & 1 \\ b-1 & (2-b)t+2(1-t) \end{pmatrix} \right)
-\left( \begin{pmatrix} a & 0 \\ 0 & a \end{pmatrix}, \begin{pmatrix} 0 & 1 \\ a-1 & (2-a)t+2(1-t) \end{pmatrix} \right),
\end{align*}}
we see that
{\allowdisplaybreaks \begin{align*}
\partial z
&= \left(\begin{pmatrix} b & 0 \\ 0 & b \end{pmatrix}, \begin{pmatrix} 0 & 1 \\ b-1 & 2-b \end{pmatrix} \right)
-\left(\begin{pmatrix} a & 0 \\ 0 & a \end{pmatrix}, \begin{pmatrix} 0 & 1 \\ a-1 & 2-a \end{pmatrix} \right) \\
&= \left(\begin{pmatrix} b & 0 \\ 0 & b \end{pmatrix}, \begin{pmatrix} 1-b & 0 \\ 0 & 1 \end{pmatrix} \right)
    -\left(\begin{pmatrix} a & 0 \\ 0 & a \end{pmatrix}, \begin{pmatrix} 1-a & 0 \\ 0 & 1 \end{pmatrix} \right)\\
&= ( b, 1-b )-( a, 1-a) \\
\end{align*}}
in $GW_l(k)$. Therefore, the equality (\ref{inv-a.1-a}) holds in $GW_l(k)$.

Now we give a proof of $(iv)$. As in the proof of the equality (\ref{inv-a.1-a}) above, it suffices to prove that $(\alpha, 1-\alpha)=0$
in $GW_2(k)$ when $\alpha \in k^\times - \{1\}$.
When $k$ is the field $\F_2$ with 2 elements, it is a vacuous statement.
If $k$ is the field $\F_3$ with 3 elements, $\alpha=2$ is the only choice and $(2, 1-2) = (2, (-2)^2) = 2 (2,-2) =0$ by $(iii)$.
Therefore, we may assume that $k$ has more than 3 elements and there exists an element $e \in k$ such that $e^3-e \neq 0$.
By the equality (\ref{inv-a.1-a}), with $a=e,\, b=1-e$, we have
$(e, 1-e )-( 1-e, e ) = 2 (e, 1-e) = 0$ in $GW(k)$.
With $a=-e,\, b=1+e$ in (\ref{inv-a.1-a}), we have $2( e, 1+e ) = 2( -e, 1+e )=0$.
Hence, $(e^2, 1-e^2 ) = 2(e, 1-e )+2(e, 1+e ) = 0$.

On the other hand, by the equality (\ref{inv-a.1-a}) with $a=\alpha,\, b=e^2$,
we see that $(\alpha, 1-\alpha )=(e^2, 1-e^2 ) =0$ in $GW(k)$ and we are done.
\end{proof}

\begin{corollary} \label{multilin-l-l} (Multilinearity and Skew-symmetry for $GW_l(k)$)

(i) $\displaystyle (A_1, \dots, A_{i-1}, B C, A_{i+1}, \dots, A_l) =
 (A_1, \dots, A_{i-1}, B, A_{i+1}, \dots, A_l) +(A_1, \dots, A_{i-1}, C, A_{i+1}, \dots, A_l)$
 in $GW_l(k)$, for arbitrary commuting matrices $B, C, A_1,\dots,A_{i-1}, A_{i+1},\dots, A_l$ in $GL_n(k)$,

(ii) $(A_1,\dots,A_i, \dots, A_j, \dots, A_l) = - (A_1,\dots,A_j, \dots, A_i, \dots, A_l)$
in $GW_l(k)$
for arbitrary commuting matrices $A_1,\dots,A_l$ in $GL_n(k)$.
\end{corollary}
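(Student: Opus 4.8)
The plan is to observe that both assertions have, in fact, already been established: part (i) is word-for-word the second displayed equality of Lemma \ref{boundary-Z}(i), and part (ii) is word-for-word Lemma \ref{boundary-Z}(ii). I would therefore prove the corollary simply by citing those two parts of Lemma \ref{boundary-Z}. The only role of the corollary is to isolate, under a memorable name, the two relations that will be invoked repeatedly in the sequel, so that there is nothing genuinely new to verify.

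It is nonetheless worth recording \emph{why} these two relations deserve to be singled out. They are precisely the Goodwillie-group analogues of the multilinearity and skew-symmetry of Milnor symbols proved in Lemma \ref{Milnor-basic-relation}(i),(ii): the generator $(A_1,\dots,A_l)$ plays the role of the symbol $\{a_1,\dots,a_l\}$, matrix multiplication in a fixed slot plays the role of multiplication of field elements, and transposition of two slots introduces the sign $-1$. Together with Lemma \ref{boundary-Z}(iii) (the vanishing when $A_i=-A_j$) and Lemma \ref{boundary-Z}(iv) (the Steinberg-type relation $(\dots,\alpha,\dots,1-\alpha,\dots)=0$), they show that the symbols $(A_1,\dots,A_l)$ satisfy exactly the defining relations of Milnor's $K$-theory, which is the structural input needed later to compare $\bigoplus_l GW_l(k)$ with $\bigoplus_l K^M_l(k)$.

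I expect no genuine obstacle here, since all of the substance was already carried out in the proof of Lemma \ref{boundary-Z}(i),(ii); the nontrivial ingredients there were the explicit invertible polynomial matrices $H(t)$ in $GL_{2n}(k[t])$ used to realize the product $BC$ in terms of $B\oplus C$ and $I_n\oplus BC$ (yielding multilinearity) and to swap two slots at the cost of a sign (yielding skew-symmetry). The one point I would still check explicitly is that multilinearity is claimed in an arbitrary slot $i$, not merely the first; this is the content of the ``also'' clause and the ``the proof is similar for the other cases'' remark in Lemma \ref{boundary-Z}(i), obtained by running the same homotopy in the $i$-th coordinate while leaving the remaining coordinates as $A_j\oplus A_j$.
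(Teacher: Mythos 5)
Your proposal is correct and matches the paper exactly: the paper gives no separate proof of Corollary \ref{multilin-l-l} because parts (i) and (ii) are verbatim restatements of Lemma \ref{boundary-Z}(i) (its second displayed equality) and Lemma \ref{boundary-Z}(ii), so citing those is all that is required. Your additional remarks about the role of the arbitrary slot $i$ and the analogy with Lemma \ref{Milnor-basic-relation} are accurate but not needed for the proof itself.
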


If $A_1,\dots,A_l$ and $A'_1,\dots,A'_l$ are $l$-tuples of commuting matrices in $GL_n(k)$ and $GL_m(k)$, respectively, then
$\displaystyle (A_1,\dots,A_l) + (A'_1,\dots,A'_l) = (A_1 \oplus A'_1,\dots,A_l \oplus A'_l)$
in $GW_l(k)$. Therefore, we obtain the following result from Corollary \ref{multilin-l-l}, which may be viewed as a generalization of
Corollary \ref{single-rep-GW_1}.
\begin{corollary}
Every element in $GW_l(k)$ can be written as a single symbol $(A_1,\dots,A_l)$, where
$A_1,\dots,A_l$ are commuting matrices in some $GL_n(k)$.
\end{corollary}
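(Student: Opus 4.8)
The plan is to show that an arbitrary $\Z$-linear combination of symbols collapses to a single symbol, handling positive multiplicities by block sums and negative signs by inverting a single coordinate, exactly in the spirit of the $l=1$ case treated in Corollary \ref{single-rep-GW_1}.

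First I would recall that, by construction, every element of $GW_l(k)$ is a finite sum $\sum_j n_j\,(A_1^{(j)},\dots,A_l^{(j)})$ with $n_j \in \Z$. The displayed identity immediately preceding the statement---which is just the Direct Sum relation $(iii)$ of Definition \ref{GW_l}---says that the sum of two symbols is again a single symbol, namely the symbol built from the block-diagonal tuples $A_i^{(j)} \oplus A_i^{(j')}$. Iterating this observation reduces the entire problem to the claim that each individual term $n_j\,(A_1^{(j)},\dots,A_l^{(j)})$ is itself a single symbol.

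To see this, treat the two signs separately. For a nonnegative coefficient $n \ge 0$, repeated application of the same Direct Sum identity gives $n\,(A_1,\dots,A_l) = (A_1 \oplus \dots \oplus A_1,\dots,A_l \oplus \dots \oplus A_l)$, the symbol of the $n$-fold block-diagonal sums, with the degenerate case $n=0$ yielding a tuple that contains an identity matrix and hence vanishes by the Identity Matrix relation. For the sign change it suffices to realize $-(A_1,\dots,A_l)$ as a single symbol: by the Multilinearity part of Corollary \ref{multilin-l-l} applied in the first slot, $(A_1,A_2,\dots,A_l) + (A_1^{-1},A_2,\dots,A_l) = (A_1 A_1^{-1},A_2,\dots,A_l) = (I_n,A_2,\dots,A_l) = 0$, so that $-(A_1,\dots,A_l) = (A_1^{-1},A_2,\dots,A_l)$. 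Since $A_1^{-1}$ commutes with precisely those matrices commuting with $A_1$, the new tuple is still commuting, so this is a legitimate symbol. Combining the two cases, every term $n_j\,(A_1^{(j)},\dots,A_l^{(j)})$ is a single symbol, and one final round of the Direct Sum identity assembles them all into the desired single symbol $(A_1,\dots,A_l)$.

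I do not expect any essential obstacle here; once Corollary \ref{multilin-l-l} and the Direct Sum relation are in hand the argument is a straightforward bookkeeping reduction. The only point demanding even minimal care is the verification that inverting one coordinate preserves commutativity of the tuple, which is immediate as noted above, and keeping track of the empty block sum as the zero element via the Identity Matrix relation.
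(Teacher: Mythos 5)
Your proposal is correct and follows exactly the route the paper intends: the paper derives this corollary from the direct-sum identity $(A_1,\dots,A_l)+(A'_1,\dots,A'_l)=(A_1\oplus A'_1,\dots,A_l\oplus A'_l)$ together with Corollary \ref{multilin-l-l}, which is precisely your mechanism of absorbing sums by block diagonals and negatives by inverting one coordinate, generalizing Corollary \ref{single-rep-GW_1}. The paper leaves these details implicit; your write-up simply spells them out.
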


\begin{corollary} \label{GW-commutative}
The graded ring $\displaystyle \bigoplus_{l \ge 0} GW_l(k)$ in Lemma \ref{GW-ring} is anti-commutative.
\end{corollary}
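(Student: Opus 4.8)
The plan is to establish the graded commutativity rule $\xi \cdot \eta = (-1)^{pq}\,\eta \cdot \xi$ for $\xi \in GW_p(k)$ and $\eta \in GW_q(k)$, which is what anti-commutativity of the graded ring $\bigoplus_{l \ge 0} GW_l(k)$ means. Since the product of Definition \ref{product-GW} is biadditive by construction and every element of $GW_l(k)$ is a single symbol (by the preceding corollary), it suffices to check the identity on single symbols $\xi = (A_1,\dots,A_p)$ and $\eta = (B_1,\dots,B_q)$, where $A_1,\dots,A_p \in GL_n(k)$ commute pairwise and $B_1,\dots,B_q \in GL_m(k)$ commute pairwise. The degenerate cases $p=0$ or $q=0$ are immediate, since there the product is merely the $\Z$-module multiplication and $(-1)^{pq}=1$; so I may assume $p,q \ge 1$.

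Next I would write out the two $(p+q)$-tuples of commuting matrices in $GL_{nm}(k)$ that represent the two products according to Definition \ref{product-GW}: the symbol $\xi \cdot \eta$ is represented by $(A_1 \otimes I_m, \dots, A_p \otimes I_m,\, I_n \otimes B_1, \dots, I_n \otimes B_q)$, while $\eta \cdot \xi$ is represented by $(B_1 \otimes I_n, \dots, B_q \otimes I_n,\, I_m \otimes A_1, \dots, I_m \otimes A_p)$. The comparison then splits into two independent ingredients. First, applying the skew-symmetry of Corollary \ref{multilin-l-l}(ii) to move the block of $p$ entries $I_m \otimes A_i$ in front of the block of $q$ entries $B_j \otimes I_n$ requires exactly $pq$ adjacent transpositions and therefore introduces the sign $(-1)^{pq}$; this rewrites $\eta \cdot \xi$ as $(-1)^{pq}\,(I_m \otimes A_1, \dots, I_m \otimes A_p,\, B_1 \otimes I_n, \dots, B_q \otimes I_n)$. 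Second, I would invoke the similar-matrix relation (ii) of Definition \ref{GW_l}: simultaneous conjugation of the whole tuple by the perfect-shuffle permutation matrix $P \in GL_{nm}(k)$ realizing the canonical isomorphism $k^m \otimes k^n \cong k^n \otimes k^m$ carries each $I_m \otimes A_i$ to $A_i \otimes I_m$ and each $B_j \otimes I_n$ to $I_n \otimes B_j$, turning the reordered tuple into the representative of $\xi \cdot \eta$. Combining the two ingredients gives $\eta \cdot \xi = (-1)^{pq}\,\xi \cdot \eta$, which is the desired relation.

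The main obstacle is the second ingredient: verifying that a single permutation matrix $P$ effects both substitutions $I_m \otimes A_i \mapsto A_i \otimes I_m$ and $B_j \otimes I_n \mapsto I_n \otimes B_j$ at once. This rests on the Kronecker commutation identity $P\,(U \otimes V)\,P^{-1} = V \otimes U$, valid for all $U \in GL_m(k)$ and $V \in GL_n(k)$ with one fixed $P$ depending only on $m$ and $n$; taking $(U,V)=(I_m, A_i)$ and $(U,V)=(B_j, I_n)$ produces the two required conjugations simultaneously, and since conjugation by $P$ is an algebra automorphism it automatically preserves the commutativity of the tuple. As $P$ is a $0$–$1$ permutation matrix it lies in $GL_{nm}(k)$ over any field, so relation (ii) applies verbatim. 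The only bookkeeping to be careful about is matching the sizes of the identity blocks $I_m$ and $I_n$ to the exact form of the product in Definition \ref{product-GW}; once the commutation identity is in place, what remains is only the routine sign count already described.
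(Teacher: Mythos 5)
Your argument is correct and is essentially the paper's own proof: both rest on the same two ingredients, namely conjugation by the fixed shuffle permutation matrix realizing $P(U\otimes V)P^{-1}=V\otimes U$ (relation (ii) of Definition \ref{GW_l}) and the skew-symmetry of Corollary \ref{multilin-l-l}(ii) applied $pq$ times. The only difference is the order in which the two steps are applied, which is immaterial.
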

\begin{proof}
Note that, when $A \in GL_m(k)$ and $B \in GL_n(k)$, then
the matrix $A \otimes B$ is similar to $B \otimes A$
by the similarity matrix $S$ which is given by a base change
which sends the $(i,j)$-th canonical vector to $(j,i)$-th canonical vector for each $i=1,\dots,m$ and $j=1,\dots,n$.
Therefore, for commuting $A_1, \dots, A_p \in GL_m(k)$ and commuting $B_1, \dots, B_q \in GL_n(k)$, we have
$\displaystyle (A_1, \dots, A_p) \cdot (B_1, \dots, B_q) = (A_1 \otimes I_n, \dots, A_p \otimes I_n, I_m \otimes B_1, \dots, I_m \otimes B_q )
=(I_n \otimes A_1, \dots, I_n \otimes A_p, B_1 \otimes I_m, \dots, B_q \otimes I_m)$ by Definition \ref{GW_l} $(ii)$.
But, by Corollary \ref{multilin-l-l}, it is equal to $(-1)^{pq} (B_1 \otimes I_m, \dots, B_q \otimes I_m, I_n \otimes A_1, \dots, I_n \otimes A_p)
=(-1)^{pq} (B_1, \dots, B_q) \cdot (A_1, \dots, A_p).$
\end{proof}

In fact, this corollary can be also deduced from Lemma \ref{Milnor-basic-relation} $(ii)$ and Theorem \ref{Milnor-iso}, which is proved later.
Thanks to Lemma \ref{boundary-Z}, we may now construct a map from the Milnor's $K$-groups to the Goodwillie groups.

\begin{prop} \label{Milnor-map}
The assignment $\{ a_1,a_2, \dots, a_l \} \mapsto (a_1,a_2,\dots,a_l) $ for each Milnor symbol
$\{a_1,a_2, \dots, a_l \}$ gives rise to a homomorphism $\rho_l$
from the Milnor's $K$-group $K^M_l(k)$ to $GW_l(k)$ for any field $k$. In fact, it gives a graded ring homomorphism
from $\displaystyle \bigoplus_{l \ge 0} K^M_l(k)$ to $\displaystyle \bigoplus_{l \ge 0} GW_l(k)$.
\end{prop}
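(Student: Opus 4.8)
The plan is to realize $\rho_l$ as a composite: first define a multilinear map on the Cartesian product $(k^\times)^l$, invoke the universal property of the tensor product to pass to a homomorphism on $(k^\times)^{\otimes l}$, and then check that the defining Steinberg relations of $K^M_l(k)$ are annihilated, so that the map descends to $K^M_l(k)$. The genuine mathematical content has already been isolated in Lemma \ref{boundary-Z}; what remains is formal.

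First I would consider the set map $(k^\times)^l \to GW_l(k)$ sending $(a_1, \dots, a_l)$ to the symbol $(a_1, \dots, a_l)$ in which each $a_i$ is regarded as a $1 \times 1$ invertible matrix. Since $1 \times 1$ matrices over $k$ commute automatically, Lemma \ref{boundary-Z} $(i)$ applies in every slot and shows that this map is multiplicative in each argument, that is, $\Z$-multilinear when $k^\times$ is viewed as an abelian group. By the universal property of the tensor product over $\Z$, it therefore factors uniquely through a homomorphism $(k^\times)^{\otimes l} \to GW_l(k)$.

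Next I would show that this homomorphism kills the subgroup generated by the Steinberg elements $a_1 \otimes \dots \otimes a_l$ with $a_i + a_j = 1$ for some $i < j$. Writing $a_i = \alpha$ and $a_j = 1 - \alpha$ with $\alpha \in k - \{0,1\}$, the image of such a generator is precisely $(a_1, \dots, \alpha, \dots, 1-\alpha, \dots, a_l)$, which vanishes in $GW_l(k)$ by Lemma \ref{boundary-Z} $(iv)$. Hence the map descends to a well-defined homomorphism $\rho_l : K^M_l(k) \to GW_l(k)$ with $\rho_l(\{a_1, \dots, a_l\}) = (a_1, \dots, a_l)$.

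Finally, for the graded ring statement I would verify compatibility with the products. For $1 \times 1$ matrices the Kronecker products in Definition \ref{product-GW} collapse, since $a_i \otimes I_1 = a_i$ and $I_1 \otimes b_j = b_j$, so $\rho_p(\{a_1, \dots, a_p\}) \cdot \rho_q(\{b_1, \dots, b_q\})$ is represented by $(a_1, \dots, a_p, b_1, \dots, b_q)$, which equals $\rho_{p+q}(\{a_1, \dots, a_p, b_1, \dots, b_q\}) = \rho_{p+q}(\{a_1, \dots, a_p\} \cdot \{b_1, \dots, b_q\})$. Together with the fact that $\rho_0 = \mathrm{id}_{\Z}$ on $K^M_0(k) = \Z = GW_0(k)$, this shows that $\bigoplus_{l \ge 0} \rho_l$ is a homomorphism of graded rings. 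The only real obstacle in the whole argument is Lemma \ref{boundary-Z} $(iv)$, the Steinberg relation in $GW_2(k)$; granting it, this proposition follows formally from the universal property of tensor products and the triviality of Kronecker products of scalars.
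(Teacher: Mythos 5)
Your proposal is correct and follows essentially the same route as the paper: multilinearity from Lemma \ref{boundary-Z} $(i)$ (via Corollary \ref{multilin-l-l}) to factor through the tensor product, Lemma \ref{boundary-Z} $(iv)$ to kill the Steinberg relations, and the collapse of Kronecker products of $1\times 1$ matrices for the ring-homomorphism claim. Your write-up merely spells out the universal-property and product-compatibility steps that the paper leaves implicit.
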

\begin{proof}
The case $l=1$ is treated in Proposition \ref{GW1-field}.
So we may suppose that $l \ge 2$.
Thanks to Corollary \ref{multilin-l-l} $(i)$, it suffices to
show that for every $\alpha \in k-\{0,1\}$ and $c_r \in k^\times$ for $1 \le r \le l$, $r \ne i,j$,
$(c_1, \dots, \alpha, \dots, 1-\alpha, \dots, c_l)$ is in $\partial C(k[t], l)$.
But, this is already done in Lemma \ref{boundary-Z} $(iv)$.
The fact that the $\rho_l$ gives rise to a graded ring homomorphism follows from the definition of products.
\end{proof}

\section{Transfer maps for Goodwillie groups and relations with Milnor's $K$-groups}
\label{sec-transfer-Goodwillie}

In case of the Goodwillie groups, there is an immediate functorial definition of the transfer maps
for any finite field extension $k \subset L$.

\begin{definition} If $A_1, \dots, A_l$ are commuting invertible matrices of rank $n$ with coefficients
in $L[t]$ (respectively, $L$),
then by identifying $L[t]$ (respectively $L$) as a free $k[t]$-module (respectively $k$-module) of rank $d=[L:k]$,
we may consider $A_1, \dots, A_l$ as commuting invertible linear maps on $L[t]^n$ (respectively $L^n$), i.e., commuting invertible linear maps
on $k[t]^{nd}$ (respectively, $k^{nd}$).
So, they are associated with certain commuting invertible matrices $A'_1, \dots, A'_l$ of rank $nd$ with coefficients in $k[t]$ (respectively, $k$).
This gives a map $C(L[t], l) \rightarrow C(k[t], l)$ (see Definition \ref{GW-complex})
(respectively, a map $C(L, l) \rightarrow C(k, l)$). These maps are compatible with
the boundary map $\partial$ and thus they induce a homomorphism $N_{L/k}: \ GW_l(L) \rightarrow GW_l(k)$, which is called the
transfer map for the Goodwillie group.
\end{definition}

For convenience, we define $N_{L/k}: GW_0(L)=\Z \rightarrow GW_0(k)=\Z$ to be a multiplication by the degree $[L:k]$ of the field extension.
It is immediate from the definition that $N_{L'/L} \circ N_{L/k} = N_{L'/k}$ whenever we have a tower of finite field extensions $k \subset L \subset L'$.

If $d=[L:k]$, $L$ is isomorphic to $k^{\oplus d}$ as $k$-vector space.
So, a multiplication by a matrix $A$ of rank $n$ with entries in $k$ induces a $k$-linear map on $L$,
which is associated with the matrix $A \otimes I_d$ of rank $nd$ whose diagonal blocks are equal to $A$.
Therefore, the composition
$$ \xymatrix {GW_l(k) \ar[r]^-{i_{L/k}} & GW_l(L) \ar[r]^-{N_{L/k}} & GW_l(k) },$$
where the first map $i_{L/k}$ is induced by the inclusion of the fields $k \subset L$,
is just a multiplication by $d$.

More generally, we have the following projection formula.

\begin{lemma} (Projection formula) \label{projection-formula}
For $z \in GW_p(k)$ and $w \in GW_q(L)$, we have $N_{L/k}( i_{L/k}(z) \cdot w ) = z \cdot N_{L/k}(w)$ in $GW_{p+q}(k)$, where the products
$\cdot$ are defined as in Definition \ref{product-GW}.
\end{lemma}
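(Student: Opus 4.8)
The plan is to reduce to generators and then match the two sides by a direct computation with Kronecker products and the $k$-linearization that defines $N_{L/k}$. Since $i_{L/k}$ and $N_{L/k}$ are group homomorphisms and the product of Definition \ref{product-GW} is additive in each argument, both $N_{L/k}(i_{L/k}(z) \cdot w)$ and $z \cdot N_{L/k}(w)$ are additive in $z$ and in $w$ separately. Using the corollary that every element of a Goodwillie group is representable by a single symbol, it therefore suffices to treat $z = (A_1, \dots, A_p)$ with commuting $A_1, \dots, A_p \in GL_n(k)$ and $w = (B_1, \dots, B_q)$ with commuting $B_1, \dots, B_q \in GL_m(L)$.

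First I would make the linearization explicit. Fixing a $k$-basis of $L$, write $\mu \colon L \to M_d(k)$ for the regular representation, so that $\mu$ is a ring homomorphism with $\mu(c) = c\,I_d$ for $c \in k$; for $B = (b_{rs}) \in GL_m(L)$ the associated matrix over $k$ is the block matrix $\tilde B = \bigl(\mu(b_{rs})\bigr)_{r,s} \in GL_{md}(k)$, and by definition $N_{L/k}(w) = (\tilde B_1, \dots, \tilde B_q)$. Consequently the right-hand side $z \cdot N_{L/k}(w)$ is represented, via Definition \ref{product-GW}, by the symbol $(A_1 \otimes I_{md}, \dots, A_p \otimes I_{md},\, I_n \otimes \tilde B_1, \dots, I_n \otimes \tilde B_q)$ of commuting matrices in $GL_{nmd}(k)$.

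Next I would compute the left-hand side. In $GW_{p+q}(L)$ the product $i_{L/k}(z) \cdot w$ is represented by $(A_1 \otimes I_m, \dots, A_p \otimes I_m,\, I_n \otimes B_1, \dots, I_n \otimes B_q)$, a tuple of commuting matrices in $GL_{nm}(L)$, and applying $N_{L/k}$ replaces each of these by its $k$-linearization. The two identities that make everything collapse are $\widetilde{A_i \otimes I_m} = A_i \otimes I_{md}$ and $\widetilde{I_n \otimes B_j} = I_n \otimes \tilde B_j$. The first holds because $A_i \otimes I_m$ has entries in $k$, so $\mu$ sends each entry to that scalar times $I_d$, i.e. linearization is tensoring with $I_d$, and $A_i \otimes I_m \otimes I_d = A_i \otimes I_{md}$; the second holds because the entries of $I_n \otimes B_j$ that lie in $L$ all sit in the $B_j$-block, so linearization leaves the $I_n$ factor untouched. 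Granting these, $N_{L/k}(i_{L/k}(z) \cdot w)$ is represented by exactly the same tuple as $z \cdot N_{L/k}(w)$, and the lemma follows.

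I expect the only real difficulty to be bookkeeping of indices rather than anything conceptual: one must check that the $k$-basis used to linearize the size-$nm$ matrices over $L$ on $L^{nm}$ induces the same ordering of $k^{nmd}$ (basis vectors indexed by a row index of $A$, a row index of $B$, and a basis vector of $L$) as the Kronecker-product ordering appearing on the right-hand side. If the chosen orderings differ, the two representatives agree only after conjugation by a fixed permutation matrix $S \in GL_{nmd}(k)$, and they are then identified by relation (ii) of Definition \ref{GW_l}; no use of the homotopy relation (iv) is required. In particular the identity in fact already holds between representatives in $C(k, p+q)$, hence a fortiori in $GW_{p+q}(k)$.
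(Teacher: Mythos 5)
Your proposal is correct and follows essentially the same route as the paper: reduce by bilinearity to single symbols $z=(A_1,\dots,A_p)$, $w=(B_1,\dots,B_q)$, then observe that $k$-linearization sends $A_i\otimes I_m$ to $A_i\otimes I_{md}$ (since its entries lie in $k$) and $I_n\otimes B_j$ to $I_n\otimes \tilde B_j$, so both sides are represented by the same tuple. Your extra care about the basis ordering, handled by conjugation via relation (ii), is a detail the paper leaves implicit but does not change the argument.
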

\begin{proof}
We may assume that $z = (A_1, \dots, A_p)$ and $w = (B_1, \dots, B_q)$
for some commuting $A_1, \dots, A_p \in GL_m(k)$ and commuting $B_1, \dots, B_q \in GL_n(L)$.

For a fixed basis of $L$ as a $k$-vector space, let $B'_1, \dots, B'_q$ be the commuting matrices in $GL_{nd}(k)$ which are associated
with the commuting invertible linear maps induced by $B_1, \dots, B_q$ on $L^n \simeq k^{nd}$.
Then we have
\begin{align*}
N_{L/k} \left( (A_1, \dots, A_p) \cdot (B_1, \dots, B_q) \right)
&=N_{L/k}\left( A_1 \otimes I_n, \dots, A_p \otimes I_n, I_m \otimes B_1, \dots, I_m \otimes B_q \right) \\
&= \left( (A_1 \otimes I_n) \otimes I_d, \dots, (A_p \otimes I_{n})\otimes I_d , I_m \otimes B'_1, \dots, I_m \otimes B'_q \right) \\
&=  (A_1, \dots, A_p) \cdot N_{L/k} \left( (B_1, \dots, B_q) \right).
\end{align*}
The case when $p=0$ (or $q=0$) is already shown above and the proof is complete.
\end{proof}

Note that we also have $N_{L/k}( w \cdot i_{L/k}(z) ) =  N_{L/k}(w) \cdot z$ by Corollary \ref{GW-commutative}.
By Proposition \ref{GW1-field}, every element in $GW_l(k)$ can be identified with a symbol represented by its determinant,
so we have the following corollary from the projection formula.

\begin{corollary} \label{projection-determinant-formula}
Suppose that $\alpha_1, \dots, \alpha_l \in k^\times$ and $\beta \in L^\times$.
Then $$N_{L/k} \left( \alpha_1, \dots, \alpha_l, \beta \right) = \left(\alpha_1, \dots, \alpha_l, N_{L/k}(\beta) \right),$$
where $N_{L/k}(\beta) \in k^\times$ is the image of $\beta$ under the usual norm map $N_{L/k}: L^\times \rightarrow k^\times$.
By Corollary \ref{multilin-l-l} (ii), even if $\beta$ is not located in the last coordinate, a similar equality holds.
\end{corollary}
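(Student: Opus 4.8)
The plan is to realize the symbol $(\alpha_1, \dots, \alpha_l, \beta)$ as a product to which the projection formula of Lemma \ref{projection-formula} applies, and then to compute the transfer of a single $1\times 1$ symbol explicitly as the classical field norm. First I would set $z = (\alpha_1, \dots, \alpha_l) \in GW_l(k)$ and $w = (\beta) \in GW_1(L)$, regarding all entries as $1 \times 1$ matrices. Since the Kronecker product of $1\times 1$ matrices is ordinary multiplication of scalars and each $I_1$ is a unit, the product of Definition \ref{product-GW} collapses completely, so that $i_{L/k}(z) \cdot w = (\alpha_1, \dots, \alpha_l, \beta)$ in $GW_{l+1}(L)$, with the $\alpha_i$ now viewed inside $L^\times$.

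Applying Lemma \ref{projection-formula} with $p=l$, $q=1$ then gives
$$N_{L/k}(\alpha_1, \dots, \alpha_l, \beta) = N_{L/k}\bigl(i_{L/k}(z) \cdot w\bigr) = z \cdot N_{L/k}(w) = (\alpha_1, \dots, \alpha_l) \cdot N_{L/k}\bigl((\beta)\bigr).$$
The crucial step, and the one I expect to be the main point, is to identify $N_{L/k}\bigl((\beta)\bigr) \in GW_1(k)$ with the symbol $\bigl(N_{L/k}(\beta)\bigr)$, where $N_{L/k}(\beta)$ denotes the classical norm. By Proposition \ref{GW1-field} an element of $GW_1(k)$ is detected by the determinant of any matrix representing it, so it suffices to observe that the transfer map for $GW_1$ sends $(\beta)$ to the class of the $k$-linear endomorphism ``multiplication by $\beta$'' on $L \simeq k^{\oplus d}$, and that the determinant of this endomorphism is by definition exactly the field norm $N_{L/k}(\beta)$.

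Combining these, and once again using that all entries are $1 \times 1$ so the product in Definition \ref{product-GW} is the plain concatenation of symbols, I obtain
$$N_{L/k}(\alpha_1, \dots, \alpha_l, \beta) = (\alpha_1, \dots, \alpha_l, N_{L/k}(\beta)).$$
For the final assertion, when $\beta$ occupies a coordinate other than the last, I would first transpose it into the last slot using the skew-symmetry of Corollary \ref{multilin-l-l} (ii), apply the formula just established, and then carry $N_{L/k}(\beta)$ back to its original position with the same sign, the $\alpha_i$ being unaffected by the norm.
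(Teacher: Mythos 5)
Your proof is correct and follows essentially the same route as the paper, which derives the corollary directly from the projection formula (Lemma \ref{projection-formula}) together with the identification $GW_1(k)\simeq k^\times$ via the determinant (Proposition \ref{GW1-field}), so that $N_{L/k}\bigl((\beta)\bigr)$ is the class of multiplication by $\beta$ on $L\simeq k^{\oplus d}$, i.e.\ $\bigl(N_{L/k}(\beta)\bigr)$. The handling of the case where $\beta$ is not in the last coordinate via Corollary \ref{multilin-l-l}~(ii) also matches the paper's remark.
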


On the other hand, we have seen that the transfer maps $N_{L/k}: \ K^M_l (L) \rightarrow K^M_l (k)$ for the Milnor's $K$-groups are defined
whenever $L/k$ is a finite field extension in Section \ref{sec-transfer-Milnor}.

We will need the following lemma to prove the compatibility between the two transfer maps.

\begin{lemma} \label{Weil-like-formula}
For monic irreducible polynomials $f_0(X), \dots, f_{l}(X)$ in $k[X]$, which are relatively prime, we have
$\displaystyle \sum_{v} N_{k_v /k} \left( \rho_l \partial_v \{ f_0(X), \dots, f_{l}(X) \} \right) = 0,$
where the sum is taken over all discrete valuations, including $v_\infty$ on $k(X)$, which vanish on $k$.
\end{lemma}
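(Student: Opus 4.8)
The plan is to show that only finitely many discrete valuations contribute to the sum, to compute each tame symbol explicitly, and then to push the result into $GW_l(k)$ through $\rho_l$ and the Goodwillie transfer $N_{k_v/k}$, thereby turning the assertion into a single identity among symbols built from companion matrices. Since each $f_i$ is a unit at every finite $v\neq v_i$ and is a uniformizer at the valuation $v_i$ it defines, we have $\partial_v\{f_0,\dots,f_l\}=0$ unless $v$ is one of $v_0,\dots,v_l,v_\infty$. Writing $\alpha_i$ for the image of $X$ in $k_{v_i}=k[X]/(f_i)$ and using skew-symmetry to move $f_i$ into the last slot, one gets $\partial_{v_i}\{f_0,\dots,f_l\}=(-1)^{l-i}\{f_0(\alpha_i),\dots,\widehat{f_i(\alpha_i)},\dots,f_l(\alpha_i)\}$, while $\partial_{v_\infty}\{f_0,\dots,f_l\}$ is an integer multiple of $\{-1,\dots,-1\}$, exactly as in the computation of $\partial_{v_\infty}$ in Lemma \ref{norm-inductive-formula}.

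Next I would apply $\rho_l$ and then $N_{k_{v_i}/k}$. Because each $f_j(\alpha_i)$ lies in $k_{v_i}\cong k[C_i]$, where $C_i$ is the companion matrix of $f_i$, the Goodwillie transfer replaces multiplication by $f_j(\alpha_i)$ on the $k$-vector space $k_{v_i}$ by the matrix $f_j(C_i)$. Hence the $v_i$-term becomes $(-1)^{l-i}(f_0(C_i),\dots,\widehat{f_i(C_i)},\dots,f_l(C_i))\in GW_l(k)$, a symbol of commuting matrices since all entries are polynomials in $C_i$, and $N_{v_\infty}=\mathrm{Id}$ leaves a $2$-torsion multiple of $(-1,\dots,-1)$. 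The goal is thereby reduced to verifying that the sum of these companion-matrix symbols vanishes in $GW_l(k)$.

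The split case is the heart of the matching. When every $f_i$ is linear, say $f_i=X-a_i$ with the $a_i\in k$ distinct, each $k_{v_i}=k$, every $N_{k_{v_i}/k}$ is the identity, and $C_i=(a_i)$, so the identity to be proved reads $\sum_{i=0}^l(-1)^{l-i}(\dots,a_i-a_j,\dots)_{j\neq i}+c\,(-1,\dots,-1)=0$ in $GW_l(k)$. This is exactly the image under the graded ring homomorphism $\rho_l$ of Proposition \ref{Milnor-map} of the key relation of Proposition \ref{Milnor-key-relation}: after reversing the sign of each entry via $\{-a\}=\{-1\}+\{a\}$ (turning each difference $a_i-a_j$ into $a_j-a_i$) and reordering the entries into the cyclic order of that proposition, every resulting symbol carrying $-1$ as a coordinate is $2$-torsion and absorbs, together with the $v_\infty$-term, into $\{-1,\dots,-1\}$. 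Thus in the split case the lemma is precisely $\rho_l$ applied to Proposition \ref{Milnor-key-relation}.

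Finally, the general case must be reduced to the split one, and this is where I expect the main obstacle to lie. I would pass to a finite extension $L/k$ over which all $f_i$ split, observe that the tame symbols of $\{f_0,\dots,f_l\}$ over $L(X)$ are the split-case ones (the $f_i$ being squarefree, so $L/k$ may be taken separable and the places of $L(X)$ over $v_i$ correspond to the roots of $f_i$), and relate the sum over $k$-valuations to the sum over $L$-valuations using the compatibility of the Goodwillie transfer with towers $N_{L'/L}\circ N_{L/k}=N_{L'/k}$ together with the projection formula (Lemma \ref{projection-formula} and Corollary \ref{projection-determinant-formula}); the split case then forces the $L$-sum to vanish and one descends to $GW_l(k)$. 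The hard part is precisely this descent, namely controlling the non-trivial transfers $N_{k_{v_i}/k}$ on the companion-matrix symbols and matching them with the tame symbols at the places of $L(X)$, all while keeping the argument independent of the transfer-compatibility theorem that this lemma is meant to support. A more self-contained alternative for this step would imitate the rational-canonical-form homotopy of Lemma \ref{boundary-Z}$(iv)$: construct an explicit family of commuting matrices over $k[t]$ whose boundary under $\partial\colon C(k[t],l)\to C(k,l)$ equals the difference between the companion-matrix sum and $-c\,(-1,\dots,-1)$, exhibiting it directly as zero in $GW_l(k)$.
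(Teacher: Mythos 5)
Your setup is correct and matches the paper's: only $v_0,\dots,v_l,v_\infty$ contribute, the tame symbols are as you compute them, and the $v_\infty$-term is $(-1)^{l+1}\deg f_0\cdots\deg f_l\,\{-1,\dots,-1\}$. Your treatment of the split case is also essentially the paper's endgame: after the substitution $x_i=\vartheta_0-\vartheta_i$ the cyclic sum of symbols of root differences is exactly the identity of Proposition \ref{Milnor-key-relation}. But the essential content of the lemma is the non-split case, and there your proposal has a genuine gap that you yourself flag as ``the main obstacle'' without resolving it. Your proposed reduction --- base-change the whole reciprocity sum to a splitting field $L$, invoke the split case over $L(X)$, and ``descend'' --- requires a compatibility between the Goodwillie transfer $N_{k_{v_i}/k}$ and the collection of places of $L(X)$ lying over $v_i$ (in effect a base-change or double-coset formula for $N$ against $i_{L/k}$). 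No such statement is proved in the paper, and establishing it would be comparable in difficulty to the lemma itself. Your fallback suggestion (an explicit polynomial homotopy between the companion-matrix symbols and a multiple of $(-1,\dots,-1)$) is likewise only a wish: no such homotopy is exhibited, and it is not clear one exists in a usable form.

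The paper closes this gap differently, working ``upward'' rather than descending. For each contributing valuation it proves the identity (displayed as (\ref{norm-equ1}) in the $l=2$ case)
$$N_{k_{v_f}/k}\bigl(g(\alpha),h(\alpha)\bigr)=\frac{\deg f\,\deg g\,\deg h}{[k(\alpha,\beta,\gamma):k]}\,N_{k(\alpha,\beta,\gamma)/k}\bigl(\alpha-\beta,\alpha-\gamma\bigr),$$
where $\alpha,\beta,\gamma$ are single chosen roots of $f,g,h$. This is derived entirely from tools already available: invariance of the transfer under $k$-isomorphisms of the source field, factorization of $g$ over $k(\alpha)$ and of $h$ over $k(\alpha,\beta)$, the projection formula of Corollary \ref{projection-determinant-formula} (which converts $N(\alpha-\beta_i,\alpha-\gamma_j)$ into $(\alpha-\beta_i,h_j(\alpha))$ via the usual field norm), multilinearity (Corollary \ref{multilin-l-l}), and transitivity of $N$ in towers. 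Once every term is rewritten as the \emph{same} transfer $N_{k(\vartheta_0,\dots,\vartheta_l)/k}$ applied to a symbol of root differences, the sum collapses to that transfer applied to the cyclic identity of Proposition \ref{Milnor-key-relation}, whose value $\{-1,\dots,-1\}$ transfers to $[k(\vartheta_0,\dots,\vartheta_l):k]\,(-1,\dots,-1)$ and cancels the $v_\infty$-term. This explicit norm computation is the step your proposal is missing; without it (or a proved substitute for your descent step) the argument does not go through.
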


\begin{proof}
Since $f_0(X), \dots, f_{l}(X)$ are monic, we have, in $K^M_l(k)$,
$$ \partial_{v_\infty} \{f_0(X), \dots, f_{l}(X) \} = (-1)^{l+1} \deg f_0(X) \dots \deg f_{l}(X) \{-1,-1, \dots, -1\}.$$
So we need to show that
$$ \sum_{v \ne v_\infty} N_{k_v /k} \left( \rho_l \partial_v \{ f_0(X), \dots, f_{l}(X) \} \right)
= (-1)^l \deg f_0(X) \dots \deg f_{l}(X) \left(-1,-1, \dots, -1 \right).$$

We first illustrate the case $l=2$.
Let $f(X), g(X)$, and $h(X)$ be monic irreducible polynomials in $k[X]$. Then
$\partial_v \{f(X), g(X), h(X) \}$, when $v \ne v_\infty$, vanishes unless $v$ is one of the discrete valuations $v_f$, $v_g$, and $v_h$ associated with
$f(X)$, $g(X)$, and $h(X)$, respectively.

Write $\displaystyle f(X) = \prod_{i=1}^m (X-\alpha_i)$, $\displaystyle g(X) = \prod_{i=1}^n (X-\beta_i)$,
and $\displaystyle h(X) = \prod_{i=1}^q (X-\gamma_i)$ with $\alpha = \alpha_1,
\beta = \beta_1, \gamma=\gamma_1$.
Then $\partial_{v_f} \{f(X), g(X), h(X) \} = \{ \overline{g(X)}, \overline{h(X)} \}$ in $K^M_2 (L)$, where $L=k[X]/(f(X)) \simeq k(\alpha)$.
Hence, $\partial_{v_f} \{f(X), g(X), h(X) \} = \{ g(\alpha), h(\alpha) \}$.

Let us show that the following equality is true in $GW_2(k)$.
\begin{align} \label{norm-equ1}
 N_{k_{v_f} /k} (g(\alpha), h(\alpha))
= {\frac {\deg f(X) \deg g(X) \deg h(X)} {[k(\alpha, \beta, \gamma):k]}} N_{k(\alpha, \beta, \gamma)/k} (\alpha - \beta, \alpha - \gamma)
\end{align}
We first observe that if $K/k$ is a finite field extension and $\phi: K \rightarrow K'$ is a $k$-linear field isomorphism, then
$N_{K/k} (a, b) = N_{K'/k} (\phi(a), \phi(b))$ since it is true when $K$ is a simple extension by definition.
Next, we factor $g(X)=g_1(X) \dots g_s(X)$ and $h(X)= h_1(X) \dots h_t(X)$ where $g_1, \dots, g_s$ are monic irreducible in $k(\alpha)[X]$ and
$h_1, \dots, h_t$ are monic irreducible in $k(\alpha, \beta)[X]$ so that $t = {\deg(g)}/ {[k(\alpha, \beta), k(\alpha)]}$
and $s= {\deg(h)}/ {[k(\alpha, \beta, \gamma), k(\alpha, \beta)]}$.
By rearranging $\beta_i$ and $\gamma_j$ if necessary, let $\beta_1=\beta, \beta_2, \dots, \beta_s$ and $\gamma_1=\gamma, \gamma_2,
\dots, \gamma_t$ be roots of $g_1, g_2, \dots, g_s$ and $h_1, h_2, \dots, h_t$, respectively. These elements reside in an algebraic closure of $k$.
By the above observation, we see that
$N_{k(\alpha, \beta, \gamma)/k} (\alpha - \beta, \alpha - \gamma)= N_{k(\alpha, \beta_i, \gamma_j)/k}(\alpha - \beta_i, \alpha - \gamma_j)$
for each $i=1, \dots, s$ and $j=1, \dots, t$. Hence, by Corollary \ref{projection-determinant-formula} and Corollary \ref{multilin-l-l} $(i)$,
{\allowdisplaybreaks \begin{align*}
st N_{k(\alpha, \beta, \gamma)/k} (\alpha - \beta, \alpha - \gamma)
& = \sum_{i=1, \dots, s}\sum_{j=1, \dots, t} N_{k(\alpha, \beta_i, \gamma_j)/k}(\alpha - \beta_i, \alpha - \gamma_j) \\
& = \sum_{i=1, \dots, s}\sum_{j=1, \dots, t} N_{k(\alpha, \beta_i)/k} \Big( N_{k(\alpha, \beta_i, \gamma_j)/k} \big(\alpha - \beta_i, h_j(\alpha) \big)\Big) \\
& = \sum_{i=1, \dots, s}\sum_{j=1, \dots, t} N_{k(\alpha )/k} \Big( N_{k(\alpha, \beta_i)/k(\alpha)}\big(\alpha - \beta_i, h_j(\alpha)\big) \Big)\\
& = \sum_{i=1, \dots, s}\sum_{j=1, \dots, t} N_{k(\alpha )/k} \big( g_i(\alpha), h_j(\alpha) \big)
=  N_{k(\alpha )/k} \big( g(\alpha), h(\alpha) \big).
\end{align*}}
Since $\displaystyle st = {\frac {\deg(h)} {[k(\alpha, \beta, \gamma), k(\alpha, \beta)]}} {\frac {\deg(g)} {[k(\alpha, \beta), k(\alpha)]}}
= {\frac {\deg f(X) \deg g(X) \deg h(X)} {[k(\alpha, \beta, \gamma):k]}}$, we have verified the equality (\ref{norm-equ1}).
Similarly, we have
\begin{align*} & N_{k_{v_g} /k} \rho_2 \partial_{v_g} \{f(X), g(X), h(X) \} = {\frac {\deg f(X) \deg g(X) \deg h(X)} {[k(\alpha, \beta, \gamma):k]}} N_{k(\alpha, \beta, \gamma)/k} (\beta - \gamma, \beta - \alpha) \\
\rm{and} \quad & N_{k_{v_h} /k} \rho_2 \partial_{v_h} \{f(X), g(X), h(X) \} = {\frac {\deg f(X) \deg g(X) \deg h(X)} {[k(\alpha, \beta, \gamma):k]}} N_{k(\alpha, \beta, \gamma)/k} (\gamma - \alpha, \gamma - \beta).
\end{align*}

Therefore, by Proposition \ref{Milnor-map} and by the fact that the transfer map is just a multiplication by the degree $[k(\alpha, \beta, \gamma):k]$
of the field extension for the elements contained in the base field, it suffices to show that
$\{\alpha - \beta, \alpha - \gamma \} + \{\beta - \gamma, \beta - \alpha \} + \{\gamma - \alpha, \gamma - \beta \} = \{-1, -1 \}$
in $K^M_2(k(\alpha, \beta, \gamma))$.
But, since $\displaystyle {\frac {\alpha-\beta} {\gamma-\beta}} + {\frac {\alpha-\gamma} {\beta-\gamma}} = 1$, we have
$\displaystyle \{{\frac {\alpha-\beta} {\gamma-\beta}} , {\frac {\alpha-\gamma} {\beta-\gamma}} \} =0$.
Hence, $\displaystyle \{\alpha-\beta, \alpha-\gamma \} - \{\alpha-\beta, \beta-\gamma \} - \{\gamma-\beta, \alpha-\gamma\} =0$
and we are done since $-\{\alpha-\beta, \beta-\gamma \} = \{\beta-\gamma, \alpha-\beta \} = \{\beta-\gamma, \beta-\alpha \} + \{\beta-\gamma, -1\}$,
$- \{\gamma-\beta, \alpha-\gamma\} = \{\alpha-\gamma, \gamma-\beta\} = \{\gamma-\alpha, \gamma-\beta\} + \{-1, \gamma-\beta\}$, and
$\{\beta-\gamma, -1\} + \{-1, \gamma-\beta\} = \{\beta-\gamma, -1\} - \{\gamma-\beta, -1\} = \{-1, -1\}$.

For $l \ge 3$, if we go through a similar argument which is only notationally more complicated,
the proof boils down to the computation of the following element in the Milnor's $K$-group:
$(-1)^l \{ \vartheta_0 - \vartheta_1,\vartheta_0 - \vartheta_2, \dots, , \vartheta_0 - \vartheta_{l-1}, \vartheta_0 - \vartheta_{l} \}
+ \{ \vartheta_1 - \vartheta_2, \vartheta_1 - \vartheta_3, \dots, \vartheta_1 - \vartheta_{l}, \vartheta_1 - \vartheta_0 \}
+ (-1)^l \{ \vartheta_2 - \vartheta_3, \vartheta_2 - \vartheta_4, \dots, \vartheta_2 - \vartheta_{0}, \vartheta_2 - \vartheta_1 \}
+ \dots + \{\vartheta_l - \vartheta_0, \vartheta_l - \vartheta_1, \dots, \vartheta_l - \vartheta_{l-2}, \vartheta_l - \vartheta_{l-1} \}$,
where none of $\vartheta_i$ ($i=0,\dots,l$) and their differences are 0. Note that the signs for the ($l+1$)-terms are
all plus if $l$ is even and alternating if $l$ is odd. We claim that this expression is equal to $\{-1, \dots, -1 \}$ in $K^M_l(L)$, where
$L=k(\vartheta_0, \dots, \vartheta_l)$.

We regard the indices modulo $l+1$ and write $x_i = \vartheta_0 - \vartheta_{i}$.
Then the $i$-th term ($i=0,1,\dots, l$) in the above expression, if we disregard signs, becomes
$\{x_{i+1}-x_i, x_{i+2}-x_{i}, x_{i+3}-x_i, \dots, x_{i+l}-x_i \}$.

Therefore, the proof is complete by Proposition \ref{Milnor-key-relation}.
\end{proof}

The following key result shows the compatibility between these two types of transfer maps.

\begin{prop} \label{norm-compatible}
For every finite field extension $k \subset L$, we have the following commutative diagram, where the vertical maps are the transfer maps
and the horizontal maps are the homomorphisms in Proposition \ref{Milnor-map}:
$$\xymatrix{K^M_l (L) \ar[r]^-{\rho_l} \ar[d]^-{N_{L/k}} &
    GW_l(L) \ar[d]^-{N_{L/k}} \\
    K^M_l (k) \ar[r]^-{\rho_l} &
    GW_l(k)}$$
\end{prop}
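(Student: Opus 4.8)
The plan is to reduce the square to simple extensions, strip off the constant coordinates by the projection formula, and then induct on the degree, matching the defining reciprocity of the Milnor transfer against its $\rho$-transported analogue (Lemma \ref{Weil-like-formula}) \emph{term by term}. First I would invoke functoriality: both transfers are functorial for towers of finite extensions — the Goodwillie transfer by construction, the Milnor transfer by Kato's theorem recalled after Definition \ref{transfer-Milnor} — and $\rho_l$ is natural, so by pasting squares it suffices to treat a simple extension $L=k_v=k[X]/(\pi_v)$. I would induct on $d=\deg\pi_v=[k_v:k]$, the case $d=1$ being trivial. Since both composites are homomorphisms on $K^M_l(k_v)$, it is enough to verify the identity on the generators $\{a_1,\dots,a_{l-r},f_1(\alpha),\dots,f_r(\alpha)\}$ of Lemma \ref{generator-residue-field}. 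The constant coordinates $a_i\in k^\times$ are $i_{L/k}$ of classes over $k$, so by the projection formula — Lemma \ref{projection-formula} on the Goodwillie side and the classical Bass--Tate projection formula (\cite{MR0442061}) on the Milnor side — together with the fact (Proposition \ref{Milnor-map}) that $\rho$ is a graded ring homomorphism, I may factor them out and reduce to the all-polynomial generator $z=\{f_1(\alpha),\dots,f_r(\alpha)\}$.

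For such $z$, put $y=\{f_1,\dots,f_r,\pi_v\}\in K^M_{r+1}(k(X))$, so that $\partial_v y=z$; the only valuations with $\partial_w y\neq0$ are $v$, the valuations $v_i$ attached to $f_i$ (with $\deg\pi_{v_i}=\deg f_i<d$), and $v_\infty$. On the Milnor side the defining relation $\sum_w N_{k_w/k}(\partial_w y)=0$, unwound as in Lemma \ref{norm-inductive-formula}, expresses $N_{k_v/k}(z)$ as $-\sum_i N_{v_i}(\partial_{v_i}y)-\partial_{v_\infty}y$, where each $\partial_{v_i}y$ is a symbol over the smaller field $k_{v_i}$ and $\partial_{v_\infty}y$ is a symbol over $k$. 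Applying $\rho_r$ and invoking the inductive hypothesis to replace $\rho_r N_{v_i}$ by $N_{v_i}\rho_r$ yields
\[
\rho_r\bigl(N_{k_v/k}(z)\bigr)=-\sum_{i=1}^r N_{v_i}\bigl(\rho_r\partial_{v_i}y\bigr)-\rho_r\bigl(\partial_{v_\infty}y\bigr).
\]
On the Goodwillie side, Lemma \ref{Weil-like-formula} applied to the relatively prime monic irreducible polynomials $f_1,\dots,f_r,\pi_v$ says exactly that $\sum_w N_{k_w/k}(\rho_r\partial_w y)=0$; isolating the $w=v$ term and using $N_{v_\infty}=\mathrm{Id}$ gives
\[
N_{k_v/k}\bigl(\rho_r(z)\bigr)=-\sum_{i=1}^r N_{v_i}\bigl(\rho_r\partial_{v_i}y\bigr)-\rho_r\bigl(\partial_{v_\infty}y\bigr),
\]
which is the same expression. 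Hence $\rho_r N_{k_v/k}(z)=N_{k_v/k}\rho_r(z)$, completing the inductive step and, with the first paragraph, the proof.

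I expect the main obstacle to be arranging the two reciprocities so that they agree term by term, not merely as vanishing sums: one must confirm that Lemma \ref{Weil-like-formula} is supported on precisely the same valuations $v,v_1,\dots,v_r,v_\infty$ with the same residual symbols $\partial_w y$ as the Milnor reciprocity, and that the degree strictly drops at each $v_i$ so that the inductive hypothesis genuinely applies to every transfer $N_{v_i}$ that appears. The real content — that the $\rho$-transported tame symbols satisfy any reciprocity at all — lies not in this matching but in Lemma \ref{Weil-like-formula}, which in turn rests on the key identity of Proposition \ref{Milnor-key-relation}; granting that lemma, the present proposition is a formal consequence of the two reciprocities and the degree induction.
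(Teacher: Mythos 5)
Your proposal is correct and follows essentially the same route as the paper: reduce to a simple extension by functoriality, induct on $\deg\pi_v$, and play the defining Weil reciprocity of the Milnor transfer off against its $\rho_l$-image via Lemma \ref{Weil-like-formula}, using Lemma \ref{norm-inductive-formula} to see that all other transfers occurring have strictly smaller degree. Your preliminary step of stripping the constant coordinates $a_i\in k^\times$ with the two projection formulas is a small refinement the paper leaves implicit (it applies Lemma \ref{Weil-like-formula} directly to the symbol $\{a_1,\dots,a_{l-r},f_1,\dots,f_r,\pi_v\}$, whose first coordinates are not monic irreducible polynomials), and it makes the appeal to that lemma literally match its hypotheses.
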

\begin{proof}
Because of the functoriality properties of the transfer maps for both Milnor's $K$-groups and Goodwillie groups,
we may assume that $L=k(\alpha)$ is a simple extension of $k$.

We need to prove that, for each generator $x=\partial_v(y) \in K^M_l(k_v)$ as in Lemma \ref{generator-residue-field}, we have an equality
$N_{k_v/k}(\rho_l(x)) = \rho_l(N_{k_v/k}(x))$ in $GW_l(k)$.
For such a generator $x \in K^M_l(k_v)$, by Lemma \ref{norm-inductive-formula},
we may express $N_{k_v/k} (x) = N_{k_v/k} (\partial_v(y))$ as a sum of $\pm \{-1, -1, \dots, -1\}$ and
$N_{k(v_i)/k} (x_i)$ in $K^M_l(k)$, for $i=1,2,\dots,r$, where $\deg \pi_{v_i} < \deg \pi_v$ and $x_i \in K^M_l(k_{v_i})$ for each $i$.
Note also that, for such a generator $x=\partial_v(y)$, $\partial_w(y)$ vanishes if $\deg(w) \ge \deg(v)$ unless $w$ is equal to $v$ or $v_\infty$.
Thanks to Definition \ref{transfer-Milnor} and Lemma \ref{Weil-like-formula}, we have
$\displaystyle \sum_{v} \rho_l \Big( N_{k_v /k} \big( \partial_v (y) \big) \Big)=0$ and
$\displaystyle \sum_{v} N_{k_v /k} \Big( \rho_l \big( \partial_v (y) \big) \Big) = 0$.
So, the proof of our proposition is complete by induction on the degree of $L/k$ as the proposition holds trivially when $L=k$.
\end{proof}

The following lemma essentially gives a procedure to associate Milnor symbols to a given tuple of commuting matrices and will be used to construct
the inverse to the map $\rho_l$ in Proposition \ref{Milnor-map}.

\begin{lemma} \label{symbol-splits}
For any field $k$ and commuting matrices $A_1, \dots, A_l \in GL_n(k)$, there exist finite field extensions $L_1, \dots, L_r$ of $k$ and
$\alpha_{ij} \in GL_1(L_j) = L_j^\times$ ($1 \le i \le l$, $1 \le j \le r$)
such that $\displaystyle \sum_{j=1}^r N_{L_i/k} (w_j) = (A_1, \dots, A_l)$ in $GW_l(k)$,
where $w_j = (\alpha_{1j}, \dots, \alpha_{lj}) \in GW_l(L_j)$ ($j=1,\dots, r$). Moreover,
the choices of the fields $L_i$ ($i=1, \dots, r$)
and the elements $\alpha_{ij} \in L_j^\times$ can be made canonically so that it depends only on given $A_1, \dots, A_l$.
\end{lemma}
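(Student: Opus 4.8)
The plan is to analyze the finite-dimensional commutative $k$-algebra generated by the $A_i$ and to filter the underlying vector space so that on each composition factor the matrices act as scalars in a genuine field extension; each such factor then contributes a transfer of a rank-one symbol.

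First I would set $V = k^n$ and $R = k[A_1, \dots, A_l] \subseteq \operatorname{End}_k(V)$. Since $R$ is a finite-dimensional commutative $k$-algebra it is Artinian, and because each $A_i$ is invertible in $\operatorname{End}_k(V)$ its inverse lies in the finite-dimensional ring $k[A_i] \subseteq R$, so $A_i \in R^\times$. View $V$ as a finitely generated $R$-module and choose a composition series $0 = W_0 \subset W_1 \subset \cdots \subset W_r = V$ of $R$-submodules. Each simple quotient $W_j/W_{j-1}$ is isomorphic to $R/\mathfrak{m}_j =: L_j$ for a maximal ideal $\mathfrak{m}_j$, a finite field extension of $k$, and the image $\alpha_{ij} \in L_j$ of $A_i$ lies in $L_j^\times$ since $A_i$ is a unit. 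By Jordan--H\"older the multiset of pairs $\bigl(L_j,(\alpha_{1j},\dots,\alpha_{lj})\bigr)$ depends only on the $R$-module $V$, hence only on $A_1,\dots,A_l$; this will supply the asserted canonicity.

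The key reduction is to peel off the bottom of the filtration. Fix a $k$-basis of $V$ adapted to $W_1 \subset V$, with the first $d_1 = [L_1:k]$ vectors forming a basis of $W_1 \cong L_1$. Then each $A_i = \left(\begin{smallmatrix} \Lambda_i & C_i \\ 0 & B_i \end{smallmatrix}\right)$, where $\Lambda_i \in GL_{d_1}(k)$ is the matrix of multiplication by $\alpha_{i1}$ on $L_1$ and $B_i \in GL_{n-d_1}(k)$ gives the action on $V/W_1$. I would deform the strictly-upper block away by the homotopy $A_i(t) = \left(\begin{smallmatrix} \Lambda_i & tC_i \\ 0 & B_i \end{smallmatrix}\right)$, generalizing Lemma \ref{basic-GW_1}(i). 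The point to check is that $(A_1(t),\dots,A_l(t))$ is a tuple of commuting matrices in $GL_n(k[t])$ for all $t$: the diagonal blocks commute, each $A_i(t)$ is invertible because $\det A_i(t) = \det\Lambda_i \cdot \det B_i$ is a nonzero constant, and the top-right block of $[A_i(t),A_j(t)]$ equals $t\,(\Lambda_i C_j + C_i B_j - \Lambda_j C_i - C_j B_i)$, whose value at $t=1$ is the top-right block of $[A_i,A_j]=0$, so it vanishes identically. Evaluating at $t=1$ and $t=0$ and invoking the polynomial-homotopy relation together with the direct-sum relation (Definition \ref{GW_l}(iv) and (iii)) gives $(A_1,\dots,A_l) = (\Lambda_1,\dots,\Lambda_l) + (B_1,\dots,B_l)$ in $GW_l(k)$. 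By the very definition of the Goodwillie transfer map, $(\Lambda_1,\dots,\Lambda_l) = N_{L_1/k}\bigl((\alpha_{11},\dots,\alpha_{l1})\bigr)$, the transfer of a rank-one symbol over $L_1$.

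Finally, the symbol $(B_1,\dots,B_l)$ is attached in exactly the same manner to the $R$-module $V/W_1$, whose induced composition series $0 \subset W_2/W_1 \subset \cdots \subset W_r/W_1$ has length $r-1$ with factors $L_2,\dots,L_r$. Inducting on $r$, with base case $r=1$ (where $V\cong L_1$ and the symbol is already $N_{L_1/k}(w_1)$), yields $(A_1,\dots,A_l) = \sum_{j=1}^r N_{L_j/k}(w_j)$ with $w_j = (\alpha_{1j},\dots,\alpha_{lj}) \in GW_l(L_j)$, as claimed; canonicity is the Jordan--H\"older statement from the first paragraph, and repeated factors may either be kept as repeated summands or folded into a single transfer per maximal ideal using multilinearity (Corollary \ref{multilin-l-l}(i)). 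I expect the main obstacle to be precisely the verification in the third paragraph that the linear homotopy scaling a single off-diagonal block preserves simultaneous commutativity of the whole tuple for every value of $t$; the rest is the structure theory of the Artinian algebra $R$ and bookkeeping.
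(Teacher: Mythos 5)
Your proposal is correct and takes essentially the same route as the paper: view $k^n$ as a module over the commutative Artinian algebra generated by the $A_i$ (the paper phrases this as a $k[t_1,\dots,t_l]$-module with $t_i$ acting as $A_i$), take a composition series whose simple factors are finite field extensions $L_j$ on which each $A_i$ acts as a unit $\alpha_{ij}$, identify each factor's contribution with $N_{L_j/k}(w_j)$ by the definition of the transfer, and invoke Jordan--H\"older for canonicity. The only difference is that you carefully justify the splitting $(A_1,\dots,A_l)=(\Lambda_1,\dots,\Lambda_l)+(B_1,\dots,B_l)$ via the block-triangular homotopy, a step the paper asserts without detail.
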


\begin{proof}
Let us write $z = (A_1, A_2, \dots, A_l)$, where $A_1, A_2, \dots, A_l$ are commuting matrices
in $GL_n(k)$. We then consider the vector space $E=k^n$ as a $k[t_1, \dots, t_l]$-module, on which $t_i$ acts as $A_i$. Since $E$
is of finite rank over $k$, it has a composition series $0=E_0 \subset E_1 \subset \dots \subset E_r=E$
with simple factors $L_j = E_j / E_{j-1}$ ($j=1,\dots,r$).

For each $j$, there exists a maximal ideal $\mathfrak{m}_j$ of $k[t_1, \dots, t_l]$ such that $L_j \simeq k[t_1, \dots, t_l] / \mathfrak{m}_j$.
So we can see that $L_j$ is a finite extension of $k$, and $\displaystyle z = \sum_{j=1}^r (A_1 | L_j, \dots, A_l | L_j)$,
where $A_i |L_j$ is the invertible linear map from $L_j$ onto itself induced by $A_i$.

Let us denote by $\alpha_{ij}$ the element of $L_j^\times$ which corresponds to $t_i$ (mod $\mathfrak{m}_j$) for $i=1, \dots, l$, then
$\displaystyle (A_1 | L_j, \dots, A_l | L_j) = N_{L_j/k} \left((\alpha_{1j}, \dots, \alpha_{lj})\right)$.
Take $w_j = (\alpha_{1j}, \dots, \alpha_{lj})$ and we are done.

Finally, note that the fields $L_i$ ($i=1, \dots, r$) and the elements $\alpha_{ij} \in L_j^\times$ do not change except for the order if we choose a different
composition series, e.g., by Jordan H\"{o}lder theorem.
\end{proof}

Now, we show that Milnor's $K$-groups are isomorphic to Goodwillie groups, which is similar to
a theorem by Nesterenko-Suslin in \cite{MR992981} which states that Milnor's $K$-gruops and certain Bloch's higher Chow groups are isomorphic

\begin{theorem} \label{Milnor-iso}
For any field $k$, the assignment $\{ a_1, \dots, a_l \} \mapsto (a_1,\dots,a_l) $ for each Milnor symbol
$\{a_1, \dots, a_l \}$ gives an isomorphism $\displaystyle \rho_l: K^M_l(k) {\overset{\sim}{\rightarrow}} GW_l(k)$ for $l \ge 1$.
In fact, it gives rise to a graded ring isomorphism between $\displaystyle \bigoplus_{l \ge 0} K^M_l(k)$ and $\displaystyle \bigoplus_{l \ge 0} GW_l(k)$.
\end{theorem}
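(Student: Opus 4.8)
The plan is to construct an explicit two-sided inverse $\psi_l : GW_l(k) \to K^M_l(k)$ to the ring homomorphism $\rho_l$ of Proposition \ref{Milnor-map}, using the canonical decomposition furnished by Lemma \ref{symbol-splits}. To a generating tuple $(A_1, \dots, A_l)$ of $GW_l(k)$ I attach, exactly as in Lemma \ref{symbol-splits}, the module $E = k^n$ over $k[t_1, \dots, t_l]$ with $t_i$ acting as $A_i$, its composition factors $L_1, \dots, L_r$, and the associated elements $\alpha_{ij} \in L_j^\times$, and I set
$$\psi_l(A_1, \dots, A_l) = \sum_{j=1}^r N_{L_j/k}\{\alpha_{1j}, \dots, \alpha_{lj}\}.$$
Since the Jordan--H\"older data depend only on the tuple, this is a well-defined homomorphism $\tilde\psi_l$ on the free abelian group generated by tuples of commuting matrices, and the real task is to show that $\tilde\psi_l$ factors through $GW_l(k)$.

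Granting for the moment that $\psi_l$ descends to $GW_l(k)$, both composites are identities, and this is formal. On a Milnor symbol, $\rho_l\{a_1, \dots, a_l\} = (a_1, \dots, a_l)$ is a tuple of $1 \times 1$ matrices, so $E = k$ is already simple, $L_1 = k$, $\alpha_{i1} = a_i$, and $\psi_l(a_1, \dots, a_l) = \{a_1, \dots, a_l\}$; hence $\psi_l \circ \rho_l = \mathrm{id}$. On a generating tuple, Proposition \ref{norm-compatible} gives $\rho_l(N_{L_j/k}\{\alpha_{1j}, \dots, \alpha_{lj}\}) = N_{L_j/k}(\alpha_{1j}, \dots, \alpha_{lj}) = N_{L_j/k}(w_j)$, and Lemma \ref{symbol-splits} sums these to $(A_1, \dots, A_l)$; hence $\rho_l \circ \psi_l = \mathrm{id}$ on generators, thus everywhere. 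Therefore $\rho_l$ is a group isomorphism, and since $\rho = \bigoplus_l \rho_l$ is already a graded ring homomorphism by Proposition \ref{Milnor-map}, a bijective such map is automatically a graded ring isomorphism, which is the final assertion.

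The entire difficulty is thus concentrated in verifying that $\tilde\psi_l$ kills the four relations of Definition \ref{GW_l}. Relation $(i)$ is immediate: if $A_i = I_n$ then $\alpha_{ij} = 1$ for every $j$, and a Milnor symbol with a coordinate equal to $1$ vanishes. Relation $(ii)$ holds because conjugating the $A_i$ by $S$ produces an isomorphic $k[t_1, \dots, t_l]$-module, with identical composition factors and identical elements $\alpha_{ij}$. Relation $(iii)$ holds because the composition factors of $E \oplus E'$ are, with multiplicity, the union of those of $E$ and $E'$, so that $\tilde\psi_l$ is additive on direct sums.

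The main obstacle is the polynomial homotopy relation $(iv)$: for commuting $A_i(t) \in GL_n(k[t])$ I must prove that $\psi_l(A_1(0), \dots, A_l(0)) = \psi_l(A_1(1), \dots, A_l(1))$. I would regard $E = k[t]^n$ as a module over $k(t)[t_1, \dots, t_l]$ at the generic point, package its composition factors into a class $\eta \in K^M_l(k(t))$ by transfer, and compare the specializations of $\eta$ at $t = 0$ and $t = 1$ with $\psi_l$ of the two fibers. The discrepancy between the generic class and each special fiber is supported at the finitely many valuations of $k(t)/k$ over which the family degenerates, and the sum of all these local contributions, together with the term at $v_\infty$, is governed by the Weil reciprocity law built into Definition \ref{transfer-Milnor}, in the same spirit as Lemma \ref{Weil-like-formula}. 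Carrying out this reciprocity bookkeeping so that the $t = 0$ and $t = 1$ specializations are forced to coincide is the heart of the proof; everything else is formal.
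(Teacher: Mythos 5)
Your overall architecture coincides with the paper's: the inverse is built from the canonical Jordan--H\"older decomposition of Lemma \ref{symbol-splits}, the two composites are checked to be identities exactly as you do (Proposition \ref{norm-compatible} for $\rho_l\circ\psi_l$, and the observation that a tuple of $1\times 1$ matrices gives a simple module for $\psi_l\circ\rho_l$), and relations $(i)$--$(iii)$ are disposed of in the same formal way. The problem is that the one step you yourself call ``the heart of the proof'' --- the polynomial homotopy relation $(iv)$ --- is left as a program rather than an argument. As stated, your plan does not quite parse: the generic class $\eta\in K^M_l(k(t))$ has weight $l$, so there is no tame symbol that ``specializes'' it at $t=0$ and $t=1$; specialization maps on $K^M_l$ of a discretely valued field depend on a choice of uniformizer and do not by themselves assemble into a reciprocity sum. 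The Weil reciprocity built into Definition \ref{transfer-Milnor} applies to elements of $K^M_{l+1}(k(X))$, one weight higher, and you have not said which weight-$(l+1)$ element you intend to feed into it.

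The paper's resolution is a concrete device that is missing from your sketch: with $x\in K^M_l(k(X))$ the generic class (the sum of the transfers $N_{Q_j/k(X)}\{\beta_{1j},\dots,\beta_{lj}\}$ over the composition factors at the generic point), one forms $y=\{x,(X-1)/X\}\in K^M_{l+1}(k(X))$. One then checks that $\partial_v(y)=0$ for every valuation $v$ other than those at $X$ and $X-1$ (this uses that the $\beta_{ij}$ satisfy monic polynomials over $k[X]$ whose relevant coefficients lie in $k$, since the $A_i(X)$ are invertible over $k[X]$, so that $x$ contributes no residue where $(X-1)/X$ is a unit, and the contribution at $v_\infty$ also drops out), while $\partial_v(y)$ at $\pi_v=X$ and at $\pi_v=X-1$ is, up to sign, $\phi_l$ of the fiber at $t=0$ and at $t=1$ respectively, because $v\bigl((X-1)/X\bigr)$ equals $-1$ and $+1$ there. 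Definition \ref{transfer-Milnor} then forces the two specializations to agree. Without producing this weight-$(l+1)$ class and verifying the vanishing of its residues away from $0$ and $1$, the appeal to ``reciprocity bookkeeping'' does not close the argument.
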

\begin{proof}
The case $l=1$ is done in Proposition \ref{GW1-field}. By Proposition \ref{Milnor-map}, the assignment $\{ a_1,a_2, \dots, a_l \} \mapsto (a_1,a_2,\dots,a_l) $
gives rise to a homomorphism $\rho_l$ from the Milnor's $K$-group $K^M_l(k)$ to the Goodwillie group $GW_l(k)$.
We will construct the inverse map $ \phi_l : GW_l(k) \rightarrow K^M_l(k)$ for $l \ge 2$ as follows.

For each commuting matrices $A_1, \dots, A_l$ in $GL_n(k)$, by Lemma \ref{symbol-splits},
we may canonically find finite field extensions $L_1, \dots, L_r$ of $k$ and elements $\alpha_{ij}$ of $L_j^\times$
($1 \le i \le l$, $1 \le j \le r$) such that
$\displaystyle (A_1, \dots, A_l) = \sum_{j=1}^r N_{L_j/k} \big((\alpha_{1j}, \dots, \alpha_{lj})\big)$ in $GW_l(k)$.
We set
$$ \phi_l (A_1, \dots, A_l) = \sum_j N_{L_j/k}\left( \{ \alpha_{1j}, \dots, \alpha_{lj} \} \right),$$
where $\{ \alpha_{1j}, \dots, \alpha_{lj} \}$ is a Milnor symbol and $N_{L_j/k}: \ K^M_l (L_j) \rightarrow K^M_l (k)$
is the transfer map for the Milnor's $K$-groups.
By Lemma \ref{symbol-splits}, $\phi_l$ gives a map from the set of $l$-tuples of commuting matrices into $K^M_l(k)$.
Then, it is immediate that $\rho_l(\phi_l (A_1, \dots, A_l)) = (A_1, \dots, A_l)$ in $GW_l(k)$ by Proposition \ref{norm-compatible}.
In particular, $\rho_l$ is surjective.

To show that $\phi_l$ actually gives a map from $GW_l(k)$ into $K^M_l(k)$,
it remains to show that $\phi_l$ vanishes on the relations $(i)$,$(ii)$,$(iii)$ and $(iv)$ in Definition \ref{GW_l}.
But, $\phi_l$ clearly vanishes on the relations of type $(i)$, $(ii)$ and $(iii)$
and thus $\phi_l$ gives rise to a homomorphism from $C(k, l)$ onto $K^M_l(k)$.
To verify it for the relation $(iv)$, let $A_1(X), \dots, A_l(X)$ be commuting matrices in $GL_n(k[X])$, where $X$ is an indeterminate.
Then $M=k(X)^n$ can be considered as $k(X)[t_1, \dots, t_l]$-module, on which $t_i$ acts as $A_i(X)$. Then
find a composition series $0=M_0 \subset M_1 \subset \dots \subset M_r=M$ with simple factors $Q_j = M_j / M_{j-1}$ ($j=1,\dots,r$)
and maximal ideals $\mathfrak{n}_j$ of $k(X)[t_1, \dots, t_l]$ such that $Q_j \simeq k(X)[t_1, \dots, t_l] / \mathfrak{n}_j$.
We also denote by $\beta_{ij}$ the element of $Q_j^\times$ which corresponds to $t_i$ (mod $\mathfrak{n}_j$) for $i=1, \dots, l$ and $j=1, \dots, r$.
Each $Q_j$ is a finite extension field of $k(X)$ and let $x = \sum_{j=1}^r N_{Q_j / k(X)} (\beta_{1j}, \dots, \beta_{lj}) \in K^M_l(k(X))$.

Now consider $y=\{x, {(X-1) / X} \}$ in $K^M_{l+1}(k(X))$,
where $\{x, {(X-1) / X} \}$ is a symbol which represents $\sum_u \{x_{1u}, \dots, x_{lu}, {(X-1) / X} \}$ if $x = \sum_u \{x_{1u}, \dots, x_{lu}\}$ in $K^M_l(k(X))$.
Using the product formula for Milnor's $K$-groups (\cite{MR689382}) and by the fact that the monic irreducible polynomials of $\beta_{ij}$ over $k(X)$
have their coefficients in $k[X]$ and thus constant terms in $k \subset k(X)$,
we may deduce that $x$ can be written as a sum of Milnor symbols whose coordinates are in $k \subset k(X)$.
Consequently, the image $\partial_v (y)$ is zero unless $v$ is the valuation associated with either $\pi_v = X-1$ or $\pi_v = X$.
Also note that $\partial_v (y) = \phi_l \big( (A_1(0), \dots, A_l(0)) \big)$ if $\pi_v = X$ and
$\partial_v (y) = \phi_l \big( (A_1(1), \dots, A_l(1)) \big)$ if $\pi_v = X-1$. By Definition \ref{transfer-Milnor},
we see that $\phi_l \big( (A_1(0), \dots, A_l(0)) \big) = \phi_l \big( (A_1(1), \dots, A_l(1)) \big)$. Therefore, $\phi_l$ gives
a well-defined map from $GW_l(k)$ onto $K^M_l(k)$.

Finally, we note that $\phi_l \circ \rho_l$ is also the identity map on $K^M_l(k)$ since each Milnor symbol is fixed by it and the proof is complete.
\end{proof}
\begin{corollary}
Every element of $GW_l(k)$ can be written as $(A_1, \dots, A_l)$, where each $A_i$ is a diagonal matrix in $GL_n(k)$ for some $n \ge 1$.
\end{corollary}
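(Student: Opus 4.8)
The plan is to exploit the isomorphism $\rho_l \colon K^M_l(k) \overset{\sim}{\longrightarrow} GW_l(k)$ established in Theorem \ref{Milnor-iso}, under which each Milnor symbol $\{a_1,\dots,a_l\}$ corresponds to the tuple $(a_1,\dots,a_l)$ of $1\times 1$ matrices, that is, of diagonal matrices of rank one. Given an arbitrary $z \in GW_l(k)$, surjectivity of $\rho_l$ furnishes an element $w \in K^M_l(k)$ with $z = \rho_l(w)$, so everything reduces to understanding the image of a single class of $K^M_l(k)$.

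First I would rewrite $w$ as a genuine, sign-free sum of Milnor symbols. A priori $w$ is only a finite $\Z$-linear combination of symbols, but multilinearity of Milnor symbols (Lemma \ref{Milnor-basic-relation}$(i)$, applied in the first coordinate) lets me absorb integer coefficients and signs via $n\{a_1,a_2,\dots,a_l\} = \{a_1^n, a_2,\dots,a_l\}$ and $-\{a_1,\dots,a_l\}=\{a_1^{-1},a_2,\dots,a_l\}$. Hence I may assume $w = \sum_{j=1}^r \{a_{1j}, a_{2j}, \dots, a_{lj}\}$ with all $a_{ij} \in k^\times$.

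Applying $\rho_l$ then gives $z = \sum_{j=1}^r (a_{1j}, \dots, a_{lj})$ in $GW_l(k)$, a sum of tuples of $1\times 1$ matrices. The final step is to collapse this sum into a single generator using the Direct Sum relation $(iii)$ of Definition \ref{GW_l}, which identifies $\sum_{j=1}^r (a_{1j},\dots,a_{lj})$ with $(D_1, \dots, D_l)$, where $D_i = \operatorname{diag}(a_{i1}, a_{i2}, \dots, a_{ir}) \in GL_r(k)$ is the diagonal matrix whose entries are the $a_{ij}$ over all $j$. Each $D_i$ is diagonal, and diagonal matrices commute automatically, so $(D_1,\dots,D_l)$ is a legitimate symbol in $GW_l(k)$ equal to $z$, as desired.

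There is no serious obstacle here: the entire content is carried by Theorem \ref{Milnor-iso}, and the argument afterward is a bookkeeping exercise. The only point requiring a little care is the passage from a formal $\Z$-linear combination of symbols to a sign-free sum, which multilinearity handles cleanly. The case $l = 1$ is already covered, since $GW_1(k) \simeq k^\times$ by Proposition \ref{GW1-field} and each class is represented by a single $1 \times 1$ (hence diagonal) matrix.
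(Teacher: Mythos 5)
Your proof is correct and follows the same route as the paper, which simply declares the statement ``an immediate consequence'' of the surjectivity of $\rho_l$ from Theorem \ref{Milnor-iso}; you have merely spelled out the bookkeeping (absorbing signs by multilinearity and collapsing the sum of rank-one tuples via the Direct Sum relation) that the paper leaves implicit.
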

\begin{proof}
This statement is an immediate consequence of the fact that $\rho_l$ in Theorem \ref{Milnor-iso} is surjective.
\end{proof}

\section{Joint determinants} \label{sec-joint-det}

We first state a suitable set of axioms for joint determinants.

\begin{definition} \label{joint-determinant}
A joint determinant $D\, (=D_l)$ ($l \ge 1$) is a map from the set of $l$-tuples of commuting matrices in $GL_n(k)$ ($n \ge 1$) into some abelian group $(G,+)$
which satisfies the following properties. \\
(i) (Multilinearity) For $l+1$ commuting matrices $A_1, \dots, A_l$ and $B$ in $GL_n(k)$ for some $n \ge 1$, we have
$D(A_1, \dots, A_i B, \dots, A_l) = D(A_1, \dots, A_i, \dots, A_l) + D(A_1, \dots, B, \dots, A_l)$.\\
(ii) (Block Diagonal Matrices) For commuting $A_1, \dots, A_l \in GL_m(k)$ and commuting $B_1, \dots, B_l \in GL_n(k)$ for some
$m, n \ge 1$, we have
$$D \left( \begin{pmatrix} A_1 & 0 \\ 0 & B_1 \end{pmatrix}, \dots, \begin{pmatrix} A_l & 0 \\ 0 & B_l \end{pmatrix} \right)
= D(A_1, \dots, A_l) + D(B_1, \dots, B_l).$$
(iii) (Similar Matrices) For commuting matrices $A_1, \dots, A_l \in GL_n(k)$ and any $S \in GL_n(k)$,
we have $D(S A_1 S^{-1}, \dots, S A_l S^{-1}) = D(A_1, \dots, A_l)$.\\
(iv) (Polynomial Homotopy) For commuting $A_1(t), \dots, A_l(t) \in GL_n(k[t])$, we have $D(A_1(0), \dots, A_l(0)) = D(A_1(1), \dots, A_l(1))$.
\end{definition}

For example, the usual determinant is a joint determinant with $l=1$ and $G=k^\times$, the multiplicative group of units in $k$.

We also note that $(iii)$ and $(iv)$ in Definition \ref{joint-determinant} are automatically satisfied
if we require $D$ to satisfy the following much stronger condition: \\
$(iii')$ (Compatibility with the Usual Determinant) For commuting matrices $A_1, \dots, A_l \in GL_n(k)$ and commuting matrices $B_1, \dots, B_l \in GL_n(k)$
such that $\det A_i = \det B_i$ for $i=1, \dots, l$, we have $D(A_1, \dots, A_l) = D(B_1, \dots, B_l)$.

The following theorem states that the map from the set of $l$-tuples of commuting invertible matrices to the Milnor's $K$-group, which is defined
via the inverse to $\rho_l$ in Theorem \ref{Milnor-iso}, is the universal one among all possible joint determinants.
\begin{theorem} \label{Milnor-joint-determinant}
There is a one-to-one correspondence between the set of joint determinants from the set of $l$-tuples of commuting invertible matrices
into an abelian group $G$ and the set of group homomorphisms from $K^M_l(k)$ into $G$.
\end{theorem}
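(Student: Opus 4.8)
The plan is to recognize that the Goodwillie group $GW_l(k)$ is precisely the universal recipient of a joint determinant, and then to transport this universal property across the isomorphism $\rho_l$ of Theorem \ref{Milnor-iso}.

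First I would observe that a joint determinant is exactly a set map $D$ from the set $T$ of $l$-tuples of commuting invertible matrices into $G$ that respects the four families of relations cutting out $GW_l(k)$ in Definition \ref{GW_l}. Indeed, axiom $(ii)$ (block diagonal), axiom $(iii)$ (similar matrices), and axiom $(iv)$ (polynomial homotopy) of Definition \ref{joint-determinant} coincide with relations $(iii)$, $(ii)$, $(iv)$ of Definition \ref{GW_l}, respectively. The only point to check is that the multilinearity axiom $(i)$ accounts for relation $(i)$ (identity matrices): applying multilinearity in the $i$-th slot with $A_i = B = I_n$ gives $D(\dots, I_n, \dots) = 2\,D(\dots, I_n, \dots)$, whence $D(\dots, I_n, \dots) = 0$, which is relation $(i)$.

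Next, extending $D$ $\Z$-linearly over the free abelian group on $T$, the preceding paragraph shows that $D$ kills every defining relation and therefore descends to a homomorphism $\bar{D}\colon GW_l(k) \to G$ with $\bar{D}\big((A_1, \dots, A_l)\big) = D(A_1, \dots, A_l)$. Conversely, given any homomorphism $h\colon GW_l(k) \to G$, its restriction to generators, $D_h(A_1, \dots, A_l) := h\big((A_1, \dots, A_l)\big)$, satisfies axioms $(ii)$, $(iii)$, $(iv)$ because these hold as identities in $GW_l(k)$, and it satisfies the multilinearity axiom $(i)$ because multilinearity is already valid in $GW_l(k)$ by Corollary \ref{multilin-l-l} $(i)$. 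The assignments $D \mapsto \bar{D}$ and $h \mapsto D_h$ are visibly mutually inverse, so joint determinants into $G$ are in natural bijection with $\mathrm{Hom}\big(GW_l(k), G\big)$.

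Finally I would apply Theorem \ref{Milnor-iso}: the isomorphism $\rho_l\colon K^M_l(k) \xrightarrow{\sim} GW_l(k)$ induces by precomposition a bijection $\rho_l^{*}\colon \mathrm{Hom}\big(GW_l(k), G\big) \xrightarrow{\sim} \mathrm{Hom}\big(K^M_l(k), G\big)$. Composing with the bijection above yields the desired one-to-one correspondence; concretely it sends a joint determinant $D$ to the homomorphism $\{a_1, \dots, a_l\} \mapsto D(a_1, \dots, a_l)$ (viewing the $a_i$ as $1 \times 1$ matrices), and a homomorphism $\psi\colon K^M_l(k) \to G$ to the joint determinant $(A_1, \dots, A_l) \mapsto \psi\big(\phi_l(A_1, \dots, A_l)\big)$, where $\phi_l = \rho_l^{-1}$ is the inverse constructed in Theorem \ref{Milnor-iso}. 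The one delicate step, and the only place where genuine input is needed, is matching the two presentations in the first two paragraphs --- in particular recognizing that joint-determinant multilinearity is equivalent, in the presence of the other axioms, to the identity-matrix relation of $GW_l(k)$, with the converse direction relying on the nontrivial fact (Corollary \ref{multilin-l-l}) that multilinearity is a \emph{theorem}, not an axiom, in the Goodwillie group.
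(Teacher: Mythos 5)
Your proposal is correct and follows essentially the same route as the paper: both arguments replace the multilinearity axiom by the identity-matrix relation (noting that multilinearity forces $D(\dots,I_n,\dots)=0$, and conversely that multilinearity holds in $GW_l(k)$ by Corollary \ref{multilin-l-l}), identify joint determinants with homomorphisms out of $GW_l(k)$, and then transport across the isomorphism $\rho_l$ of Theorem \ref{Milnor-iso}. Your write-up is simply a more explicit version of the paper's proof.
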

\begin{proof}
We first note that the condition $(i)$ in Definition \ref{joint-determinant} can be replaced by the following condition: \\
$(i')$ (Identity Matrices) For commuting matrices $A_1, \dots, A_l \in GL_n(k)$ with $A_i = I_n$ for some $i$, we have $D(A_1, \dots, A_l)=0$.

$(i)$ clearly implies $(i')$ and conversely, if a map $D$ from the set of $l$-tuples of commuting invertible matrices into $G$
satisfies $(ii)$, $(iii)$ and $(iv)$ in Definition \ref{joint-determinant} and $(i')$ above, then $D$ gives rise to a homomorphism
from $GW_l(k)$ into $G$ by Definition \ref{GW_l}. Then Corollary \ref{multilin-l-l} shows that $D$ should satisfy $(i)$ in Definition \ref{joint-determinant}.

Now the one-to-one correspondence between these two sets is immediate from Definition \ref{GW_l} and Theorem \ref{Milnor-iso}.
In particular, a joint determinant is defined via $\phi_l$ in the proof of Theorem \ref{Milnor-iso} followed by a homomorphism from $K^M_l(k)$ into $G$.
\end{proof}

Now we investigate the case where the target abelian group $G$ is equal to the multiplicative group $k^\times$ as in case with the usual determinant.
We refer \cite{MR90c:18010} for an overview of explicit computations of some Milnor's $K$-groups.

\begin{corollary} \label{determinant-R}
For $l \ge 2$, there exists only one nontrivial joint determinant $D_l$ from the set of $l$-tuples of commuting invertible matrices over $\R$ into $\R^\times$,
which is continuous when restricted to the set of commuting matrices in $GL_n(\R)$, for each $n$, with the usual topology.
In this case, we have $D_l(-1, -1, \dots, -1)= -1$ and the image of $D_l$ is equal to $\{ \pm 1 \}$.
\end{corollary}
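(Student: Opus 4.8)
The plan is to read the statement through Theorem~\ref{Milnor-joint-determinant}. By that theorem a joint determinant $D_l$ with values in $\R^\times$ is the same datum as a group homomorphism $\psi\colon K^M_l(\R)\to\R^\times$, and under this correspondence $D_l(a_1,\dots,a_l)=\psi(\{a_1,\dots,a_l\})$ for scalars $a_i\in\R^\times$ (viewed as $1\times1$ matrices), while in general $D_l=\psi\circ\phi_l$, where $\phi_l$ is the inverse of $\rho_l$ from Theorem~\ref{Milnor-iso} and computes a tuple through its composition factors as in Lemma~\ref{symbol-splits}. So I first recall the structure of $K^M_l(\R)$ for $l\ge2$: one has $K^M_l(\R)\cong\Z/2\oplus V$, where $V$ is uniquely divisible (a $\Q$-vector space) and the $\Z/2$ summand is generated by the $2$-torsion symbol $\{-1,-1,\dots,-1\}$ (see \cite{MR90c:18010}); note that any symbol with a $-1$ in some coordinate is $2$-torsion since $2\{-1,a_2,\dots,a_l\}=\{1,a_2,\dots,a_l\}=0$.

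Next I split the target as $\R^\times\cong\{\pm1\}\times\R_{>0}$ and write $\psi=(\epsilon,\mu)$. Because $\R_{>0}$ is torsion-free, $\mu$ annihilates the $\Z/2$ summand; because $\{\pm1\}$ has no nonzero divisible subgroup, $\epsilon(V)$ is a divisible subgroup of $\{\pm1\}$ and hence trivial, so $\epsilon$ factors through $K^M_l(\R)/V\cong\Z/2$ and is determined by the single value $\epsilon(\{-1,\dots,-1\})\in\{\pm1\}$. Thus, absent the continuity hypothesis, the homomorphisms $\psi$ are parametrized by a sign together with an essentially arbitrary $\mu\colon V\to\R_{>0}$, and the entire role of the continuity requirement is to force $\mu=0$.

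This last step is the crux, and where I expect the main work. I would use continuity only at $n=1$: the map $\nu(a_1,\dots,a_l):=\mu(\{a_1,\dots,a_l\})$ is a continuous map $(\R^\times)^l\to\R_{>0}$ that is a homomorphism in each variable and satisfies the Steinberg relation. Restricting to positive entries and passing to logarithms, continuity upgrades ``homomorphism in each variable'' to ``separately $\R$-linear'', so $\log\nu$ is a continuous multilinear form on $\R^l$, i.e. $\log\nu(a_1,\dots,a_l)=\lambda\prod_i\log a_i$ for a constant $\lambda\in\R$. Feeding in the Steinberg relation with $a\in(0,1)$ and $1-a\in(0,1)$ in two coordinates (and, for $l\ge3$, fixing the remaining coordinates to a value such as $e$) yields $\lambda\,(\log a)(\log(1-a))\cdot(\text{nonzero})=0$ for all $a\in(0,1)$, whence $\lambda=0$ and $\nu\equiv1$ on $(\R_{>0})^l$. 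Expanding signs by multilinearity and using that any symbol containing a $-1$ is $2$-torsion (hence killed by $\mu$) extends this to $\nu\equiv1$ on all of $(\R^\times)^l$; since symbols generate, $\mu=0$.

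Once $\mu=0$, $\psi=\epsilon$ has image contained in $\{\pm1\}$ and is one of exactly two homomorphisms: the trivial one (giving $D_l\equiv1$) and the nontrivial one with $\epsilon(\{-1,\dots,-1\})=-1$. For the latter $D_l(-1,\dots,-1)=-1$, and its image equals $\epsilon(K^M_l(\R))=\{\pm1\}$ since $\phi_l$ is surjective by Theorem~\ref{Milnor-iso} and $-1$ is attained. It remains to verify existence, namely that this nontrivial $D_l=\epsilon\circ\phi_l$ is continuous at every rank $n$, not just at $n=1$. Here $D_l$ is valued in the discrete group $\{\pm1\}$, so continuity means local constancy; concretely $\epsilon\circ\phi_l$ computes the parity of the number of composition factors isomorphic to $\R$ on which every $A_i$ acts by a negative scalar, the complex factors contributing trivially because $K^M_l(\C)$ is divisible for $l\ge2$ and hence maps to $1$ under $\epsilon\circ N_{\C/\R}$. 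Under a continuous deformation of a commuting tuple the eigenvalue data varies continuously, a negative real eigenvalue cannot cross $0$ while the matrices stay invertible, and such all-negative real factors can only be created or destroyed in conjugate pairs when real eigenvalues collide and leave the real axis, so this parity is locally constant. I expect the genuinely delicate points to be precisely this persistence argument (treating multiplicities and non-semisimple factors with care) together with the multilinear-form computation that kills $\mu$; everything else is formal bookkeeping through Theorem~\ref{Milnor-joint-determinant} and Theorem~\ref{Milnor-iso}.
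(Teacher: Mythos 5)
Your proof follows the same route as the paper's for the uniqueness half: reduce via Theorem~\ref{Milnor-joint-determinant} to homomorphisms $K^M_l(\R)\to\R^\times$, invoke $K^M_l(\R)\simeq(\Z/2)\oplus H$ with $H$ uniquely divisible and $\Z/2$ generated by $\{-1,\dots,-1\}$ (the paper cites \cite{MR0260844} for this), and use continuity to kill the divisible summand. The one real difference on that side is that where the paper simply cites Theorem A.1 of \cite{MR0349811} (continuous Steinberg symbols on $\R$) and says its proof ``generalizes to $l>2$,'' you reprove the needed statement by hand: the splitting $\R^\times\simeq\{\pm1\}\times\R_{>0}$, the observation that a divisible subgroup of $\{\pm1\}$ is trivial while $\R_{>0}$ is torsion-free, and the logarithm/multilinear-form computation that forces the $\R_{>0}$-component to vanish via the Steinberg relation. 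That computation is correct and only uses continuity on $GL_1$, exactly as you say; it is a legitimate self-contained substitute for the citation. Where you genuinely go beyond the paper is the existence half: the paper asserts that the projection onto $\Z/2$ yields the nontrivial continuous joint determinant without ever checking that $\epsilon\circ\phi_l$ is continuous on commuting tuples in $GL_n(\R)$ for $n>1$, and you correctly identify this as a point needing proof, with the right description of the invariant (parity of the number of real composition factors on which every $A_i$ acts by a negative scalar, complex factors contributing trivially since $K^M_l(\C)$ is divisible). Be aware, though, that your persistence argument is still a sketch: it presupposes that the joint spectrum, with multiplicities, varies continuously on the (singular) variety of commuting tuples, and the conjugate-pair bookkeeping needs to be done at the level of multisets near each old joint eigenvalue. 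If the corollary is read as asserting existence as well as uniqueness, that step must be written out; as it stands your proposal flags the gap rather than closing it, which is still more than the paper does.
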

\begin{proof}
By \cite{MR0260844}, we have $K^M_l(\R) \simeq (\Z/2) \oplus H$, where the first direct factor $\Z/2$ is generated by $\{-1, \dots, -1\}$ and $H$ is a uniquely divisible group.
Then any homomorphism from $H$ into $\R^\times$ under our interest should be trivial by Theorem A.1. of \cite{MR0349811} when $l=2$ and the proof of it
can be generalized easily for the case $l > 2$.
Therefore, the only nontrivial continuous joint determinant is given by the projection
$K^M_l(\R) \rightarrow \Z/2$ followed by $\Z/2 {\overset{\sim}{\rightarrow}}  \{ \pm 1 \} \subset \R^\times$.
Note that, when $l=2$, the only nontrivial continuous joint determinant is essentially the isomorphism $\phi_l : GW_l(\R) {\overset{\sim}{\rightarrow}} K^M_l(\R)$
followed by the Hilbert symbol $(\ ,\ )_\R$.
\end{proof}

\begin{corollary} \label{determinant-Q}
For $l=2$, there are countably many nontrivial joint determinants from the set of pairs of commuting invertible matrices over $\Q$
into $\Q^\times$. In all cases, the image of a nontrivial joint determinant is $\{ \pm 1 \}$. For $l \ge 3$, there is only one nontrivial joint determinant
from the set of $l$-tuples of commuting invertible matrices over $\Q$ into $\Q$.
\end{corollary}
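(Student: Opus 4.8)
The plan is to apply Theorem~\ref{Milnor-joint-determinant}, which puts the set of $\Q^\times$-valued joint determinants on $l$-tuples of commuting invertible matrices over $\Q$ in bijection with $\operatorname{Hom}(K^M_l(\Q),\Q^\times)$, a nontrivial joint determinant corresponding to a nonzero homomorphism and the trivial one to the zero homomorphism. The whole problem thus reduces to computing these Hom-groups. The first thing I would record is that for $l\ge 2$ the group $K^M_l(\Q)$ is a torsion group; since the torsion subgroup of $\Q^\times$ is exactly $\{\pm 1\}$, every homomorphism $K^M_l(\Q)\to\Q^\times$ must take values in $\{\pm 1\}$. This already yields the assertion that the image of any nontrivial joint determinant over $\Q$ equals $\{\pm1\}$, and reduces the counting to $\operatorname{Hom}(K^M_l(\Q),\{\pm1\})$.

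For $l=2$ I would invoke Tate's computation (see \cite{MR0349811} and \cite{MR0442061}) of
\[ K^M_2(\Q)\;\simeq\;(\Z/2)\,\oplus\,\bigoplus_{p\ \mathrm{odd}}\F_p^\times, \]
where the $\Z/2$ is generated by $\{-1,-1\}$ (the real place) and each summand $\F_p^\times$ is the target of the tame symbol $\partial_p$. Each $\F_p^\times$ is cyclic of even order $p-1$, so it carries a unique nontrivial character into $\{\pm1\}$ (its quadratic residue symbol); together with the sign at the real place, these produce, one for each place of $\Q$, the nontrivial joint determinants, namely the Hilbert symbols $(\,\cdot\,,\cdot\,)_v$ realized through $\phi_2$ of Theorem~\ref{Milnor-iso}. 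Since the places of $\Q$ form a countable set, this is exactly the countable family claimed, each member having image $\{\pm1\}$.

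For $l\ge3$ the computation is much more rigid: here I would cite the Bass--Tate result (\cite{MR0442061}; see also \cite{MR90c:18010}) that for a number field with a single real embedding one has $K^M_l(\Q)\simeq\Z/2$ for all $l\ge3$, generated by $\{-1,-1,\dots,-1\}$. Hence $\operatorname{Hom}(K^M_l(\Q),\Q^\times)\simeq\operatorname{Hom}(\Z/2,\{\pm1\})\simeq\Z/2$ possesses a single nonzero element, sending $\{-1,\dots,-1\}$ to $-1$; transporting it through $\phi_l$ gives the unique nontrivial joint determinant on $l$-tuples over $\Q$, characterized by $D_l(-1,\dots,-1)=-1$.

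I expect the only substantive input, and hence the main obstacle, to be the two structure theorems for the Milnor groups of $\Q$: Tate's exact description of $K^M_2(\Q)$ via the tame symbols at all finite primes together with the sign at the real place, and the Bass--Tate collapse of $K^M_l(\Q)$ to a single $\Z/2$ for $l\ge3$. Granting these, the remaining work is purely formal; the only points demanding care are the verification that no homomorphism can leave the torsion subgroup $\{\pm1\}$ of $\Q^\times$, and the bookkeeping that matches the nontrivial homomorphisms at $l=2$ with the countably many places of $\Q$.
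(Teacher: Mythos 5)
Your strategy coincides with the paper's: reduce via Theorem~\ref{Milnor-joint-determinant} to computing $\operatorname{Hom}(K^M_l(\Q),\Q^\times)$, observe that $K^M_l(\Q)$ is torsion for $l\ge 2$ so every homomorphism lands in the torsion subgroup $\{\pm1\}$ of $\Q^\times$, and then input Tate's description of $K^M_2(\Q)$ and the isomorphism $K^M_l(\Q)\simeq\Z/2$ for $l\ge 3$. The torsion argument, the resulting claim about the image, and the $l\ge 3$ case all match the paper and are fine.

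There is, however, a genuine gap in your enumeration for $l=2$. A homomorphism from the direct sum $(\Z/2)\oplus\bigoplus_{p}(\Z/p)^\times$ into $\{\pm1\}$ is obtained by choosing, \emph{independently on each summand}, either the trivial character or the unique surjective one; it need not be supported on a single summand. Consequently the nontrivial homomorphisms are not ``one for each place of $\Q$'': the product $(\,\cdot\,,\cdot\,)_2\,(\,\cdot\,,\cdot\,)_3$, for example, is a nontrivial homomorphism (it is nonzero on an element supported in the summand at $2$) and is distinct from every single Hilbert symbol $(\,\cdot\,,\cdot\,)_v$. The paper's proof accordingly identifies the nontrivial joint determinants with the \emph{finite products} of Hilbert symbols over nonempty finite sets of places --- a strictly larger, though still countable, family. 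As written, your argument exhibits only a countable subfamily and asserts an incorrect classification, so it does not establish the count; to repair it you must classify all of $\operatorname{Hom}\bigl(K^M_2(\Q),\{\pm1\}\bigr)$ summand by summand (and, as in the paper, justify why only finitely supported choices occur, since a homomorphism out of an infinite direct sum is a priori an arbitrary element of the corresponding product).
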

\begin{proof}
By Theorem 11.6 of \cite{MR0349811}, we have $\displaystyle K^M_2(\Q) \simeq \left( \Z/2 \right) \bigoplus_{p {\rm \ prime}} (\Z/p)^\times$.
Since it is a torsion group, we immediately notice that any nontrivial joint determinant
from the set of pairs of commuting invertible matrices over $\Q$ into $\Q^\times$ should be mapped onto $\{\pm 1\}$.
The first direct factor is generated by the Milnor symbol $\{-1, -1 \}$ and the projection $K^M_2(\Q) \rightarrow \Z/2$ followed by an isomorphism
$\Z/2 {\overset{\sim}{\rightarrow}} \{\pm 1\}$ is nothing but the Hilbert symbol $(\ ,\ )_\R$.
For each direct factor $(\Z/p)^\times$, we have a unique homomorphism onto $\{\pm 1 \}$ which is given by the quadratic residue
$\displaystyle \left({\frac { } {p}} \right)$ and it corresponds to the Hilbert symbol $(\ ,\ )_{\Q_p}$ for each prime $p$
(See Theorem 4.4.9 of \cite{MR1282290}).
Therefore, any homomorphism from $K^M_l(\Q)$ onto $\{\pm 1\}$ is a finite product of these Hilbert symbols and there are countably many possibly choices.

For $l \ge 3$, we have $K^M_l(\Q) \simeq \Z/2$ (See \cite{MR0442061}) and the proof is complete.
\end{proof}

\begin{corollary} \label{determinant-finite-field}
For a finite field $k$ and $l \ge 2$, there exists no nontrivial joint determinant $D_l$ from the set of $l$-tuples of commuting invertible matrices over $k$ into $k^\times$.
\end{corollary}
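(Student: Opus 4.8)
The plan is to reduce the statement, via Theorem~\ref{Milnor-joint-determinant}, to a purely $K$-theoretic vanishing. By that theorem, joint determinants from the set of $l$-tuples of commuting invertible matrices over $k$ into $k^\times$ correspond bijectively to group homomorphisms $K^M_l(k) \to k^\times$. Hence it suffices to prove that $K^M_l(\F_q) = 0$ for every finite field $\F_q$ and every $l \ge 2$: if the source group vanishes, the only homomorphism into $k^\times$ is trivial, so no nontrivial joint determinant can exist.

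The higher-degree cases follow formally from the case $l=2$. Using the graded ring structure of Section~\ref{milnor-K}, any symbol $\{a_1, \dots, a_l\}$ with $l \ge 2$ equals $\{a_1, a_2\}\cdot\{a_3, \dots, a_l\}$, and the remark following Lemma~\ref{Milnor-basic-relation} shows that $\{a_1, a_2\}=0$ in $K^M_2(\F_q)$ forces $\{a_1, \dots, a_l\}=0$ in $K^M_l(\F_q)$. So I would concentrate entirely on showing $K^M_2(\F_q)=0$. To set this up, fix a generator $g$ of the cyclic group $\F_q^\times$. By multilinearity (Lemma~\ref{Milnor-basic-relation}(i)), every symbol $\{g^i, g^j\}$ equals $ij\,\{g,g\}$, so $K^M_2(\F_q)$ is cyclic, generated by $\{g,g\}$; and by skew-symmetry (Lemma~\ref{Milnor-basic-relation}(ii)) one has $\{g,g\}=-\{g,g\}$, so this generator is $2$-torsion. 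It therefore suffices to kill an \emph{odd} multiple of $\{g,g\}$ by a single Steinberg relation. Writing a non-unit $x=g^i\ne 0,1$ with $1-x=g^j$, the relation $\{x, 1-x\}=0$ reads $ij\,\{g,g\}=0$, so I need an index $i$ with $g^i$ a non-square and $1-g^i$ also a non-square, i.e. both $i,j$ odd; then $ij\{g,g\}=\{g,g\}=0$. In characteristic $2$ this is even easier: there $-1=1$, so $\{g,g\}=\{g,-1\}=\{g,1\}=0$ outright (using $\{g,g\}=\{g,-1\}$, which comes from Lemma~\ref{Milnor-basic-relation}(i),(iii)).

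The main obstacle --- the one genuinely arithmetic input --- is guaranteeing that such an $x$ exists when $q$ is odd. For odd $q$, let $\chi$ denote the quadratic character of $\F_q$. A standard character-sum computation, expanding the indicator of ``both non-squares'' as $\tfrac14(1-\chi(x))(1-\chi(1-x))$, gives
$$\#\{x \in \F_q : x \ne 0,1,\ \chi(x)=\chi(1-x)=-1\} = \tfrac14\bigl(q-\chi(-1)\bigr),$$
where one uses $\sum_{x\in\F_q}\chi\bigl(x(1-x)\bigr) = -\chi(-1)$. This count equals $(q-1)/4$ when $q\equiv 1\pmod 4$ and $(q+1)/4$ when $q\equiv 3\pmod 4$, so it is at least $1$ for every odd $q\ge 3$; in particular the desired $x$ exists already for $\F_3$. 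Thus $\{g,g\}=0$ in all odd characteristics, and together with the characteristic-$2$ case we conclude $K^M_2(\F_q)=0$, hence $K^M_l(\F_q)=0$ for all $l\ge 2$. The corollary then follows immediately from Theorem~\ref{Milnor-joint-determinant}.
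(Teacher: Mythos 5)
Your proposal is correct and follows the same route as the paper: both reduce the corollary, via Theorem~\ref{Milnor-joint-determinant}, to the vanishing of $K^M_l(k)$ for $k$ finite and $l \ge 2$. The only difference is that the paper simply cites this vanishing (Example 1.5 of \cite{MR0260844}), whereas you prove it from scratch. Your proof of the vanishing checks out: the reduction from $l \ge 2$ to $l = 2$ via the graded ring structure is exactly the remark following Lemma~\ref{Milnor-basic-relation}; the observations that $\{g,g\}$ generates $K^M_2(\F_q)$ and is $2$-torsion are immediate from Lemma~\ref{Milnor-basic-relation}(i),(ii); the characteristic-$2$ case via $\{g,g\}=\{g,-1\}=\{g,1\}=0$ is fine; and the character-sum count $\tfrac14\bigl(q-\chi(-1)\bigr)$ of elements $x$ with $x$ and $1-x$ both non-squares is correct and is indeed positive for every odd $q \ge 3$, so a Steinberg relation kills an odd multiple of the $2$-torsion generator. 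This is essentially Milnor's original argument for the triviality of $K_2$ of a finite field, so what you gain is self-containedness rather than a new method.
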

\begin{proof}
This follows from the fact that $K^M_l(k) \simeq 0$ when $k$ is a finite field and $l \ge 2$ (Example 1.5 of \cite{MR0260844}).
\end{proof}

 \bibliographystyle{plain}

\end{document}